\newcommand{\Gext}{\negthinspace\negthinspace\phantom{a}^\delta G}
\newcommand\kappaarrow[2]{#1\overset\kappa\rightarrow#2}
\newtheorem{theorem}[equation]{Theorem}
\newtheorem{corollary}[equation]{Corollary}
\newtheorem{definition}[equation]{Definition}
\newtheorem{lemma}[equation]{Lemma}
\newtheorem{desideratum}[equation]{Desideratum}
\newtheorem{conjecture}[equation]{Conjecture}
\newtheorem{proposition}[equation]{Proposition}
\newtheorem{remark}[equation]{Remark}
{\theorembodyfont{\rmfamily}
\newtheorem{remarkplain}[equation]{Remark}

\newtheorem{exampleplain}[equation]{Example}

\newtheorem{mytable}[equation]{Table}
}
\font\temporary=manfnt
\def\dbend{{\temporary\char127}} 
\def\danger{\begin{trivlist}\item[]\noindent%
\begingroup\hangindent=3pc\hangafter=-2
\def\par{\endgraf\endgroup}%
\hbox to0pt{\hskip-\hangindent\dbend\hfill}\ignorespaces}
\def\enddanger{\par\end{trivlist}}
\newcommand{\qed}{\hfill $\square$ \medskip}
\newenvironment{proof}[1][Proof]{\noindent\textbf{#1.} }{\qed}
\newcommand{\Aut}{\mathrm{Aut}}
\newcommand{\Out}{\mathrm{Out}}
\renewcommand{\int}{\mathrm{int}}
\newcommand{\defect}{\mathrm{def}}
\newcommand{\Norm}{\mathrm{Norm}}
\newcommand{\X}{\mathcal X}
\newcommand{\M}{\mathcal M}
\newcommand{\B}{\mathcal B}
\newcommand{\D}{\mathcal D}
\newcommand{\caP}{\mathcal P}
\newcommand{\R}{\mathbb R}
\newcommand{\C}{\mathbb C}
\newcommand{\Z}{\mathbb Z}
\newcommand{\T}{\mathbb T}
\renewcommand{\H}{\mathbb H}
\newcommand{\h}{\mathfrak h}
\renewcommand{\a}{\mathfrak a}
\newcommand{\bD}{\overline D}
\newcommand{\ch}[1]{#1^\vee}
\newcommand{\bH}{\mathbf H}
\renewcommand{\bD}{\mathbf D}
\newcommand\Chat{\wh C}
\renewcommand\P{\wh P^\sigma}
\newcommand\U{\wh U_\kappa}
\newcommand\Mu{\wh\mu^\sigma}
\renewcommand\T{\wh T}
\renewcommand\a{\hat a}
\renewcommand{\sec}[1]{\section{#1}
\renewcommand{\theequation}{\thesection.\arabic{equation}}
  \setcounter{equation}{0}}
\newcommand{\subsec}[1]{\subsection{#1}
\renewcommand{\theequation}{\thesubsection.\arabic{equation}}
  \setcounter{equation}{0}}
\newcommand\inv{^{-1}}
\newcommand\wt{\widetilde}
\newcommand\wh{\widehat}
\newcommand{\kappaless}{\overset\kappa\prec}
\newcommand{\oneH}{\phantom{a}^1H}
\newcommand{\oneLambda}{\phantom{a}^1\Lambda}
\newcommand{\Hext}{\negthinspace\negthinspace\phantom{a}^\delta H}
\newcommand{\mypmod}{\hskip-6pt\pmod}
\begin{document}
\title{Computing twisted KLV polynomials}
\author{Jeffrey Adams}
\maketitle

This is an attempt to convert  \cite{lv2012b} to the atlas setting, and
write down explicit recursion formulas  for the Kazhdan-Lusztig-Vogan
polynomials which arise.

\sec{The Setup}
\label{s:setup}

The starting point is: a group $G$, a Cartan involution $\theta$, and
another involution $\sigma$ of finite order, commuting with $\theta$.

It is natural to consider the coset $\sigma K=\{\sigma\circ
\int(k)\mid k\in K\}\subset \Aut(G)$. Every element of this coset commutes with $\theta$.

We're mainly interested when
 $\sigma$ is an involution, especially the case $\sigma=\theta$.

Now fix a pinning $\caP=(H,B,\{X_\alpha\})$, and write $\delta,\epsilon\in \Aut(G)$ for the images
via the embedding $\Out(G)\hookrightarrow \Aut(G)$ (the image consists of $\caP$-distinguished automorphisms). 
Then $\delta,\epsilon\in\Aut(G)$  commute.

We now introduce the usual atlas structure. 
See \cite{algorithms} for details.
Let $\Gext=G\rtimes\langle\delta\rangle$ be the usual extended group;
$\sigma$ acts on it, trivially on $\delta$.

Recall $\X=\{\xi\in \Norm_{G\delta}(H)\,|\, \xi^2\in Z(G)\}/H$, and
$\wt\X$ is the numerator.  We'll write $x$ for elements of $\X$, 
$\xi$ for elements of $\wt\X$, and $p:\wt\X\rightarrow\X$. 
It is important to distinguish between elements of $\X$ and $\wt\X$.

For $\xi\in \wt\X$, let $\theta_{\xi}=\int(\xi)\in\Aut(G)$,
$K_{\xi}=G^{\theta_{\xi}}$.The restriction of $\theta_{\xi}$ to $H$ only depends on
$p(\xi)\in\X$, and is denoted $\theta_x$. It is important to remember 
$\theta_x$ is only an involution of $H$, not of $G$,  and  
$K_x$ is not well defined.

It is immediate that $\sigma(\X)=\X$.

After conjugating we can assume $\theta=\int(\xi_0)$ for 
some $\xi_0\in\wt\X$. Let $x_0=p(\xi_0)$, and 
define $\X_\theta=\{x\in \X\,|\, x\text{ is $G$-conjugate to }x_0\}$
($G$-conjugacy of elements of $\X$ is well defined).
Let $K=G^\theta=K_{\xi_0}$. 
Then there is a canonical bijection $\X_\theta\leftrightarrow K\backslash G/B$.

Since $\{\sigma,\theta\}=1$, $\sigma$ acts on $K\backslash G/B$.
Write $x\rightarrow\sigma^\dagger(x)$ for the automorphism of
$X_\theta$, corresponding to the action of $\sigma$ on $K\backslash
G/B$. We need to compute $\sigma^\dagger$.

The condition $\{\sigma,\theta\}=1$ holds if and only if
$\sigma(\xi_0)\in \xi_0Z$. 
It is convenient to define
$z_0=\sigma(\xi_0\inv)\xi_0\in Z^{-\sigma}$,
so 
$$
\sigma(\xi_0)=\xi_0z_0\inv.
$$
(The choice of inverse on $z_0$ is so that it goes away later.)
It makes sense to write:
\begin{equation}
\label{e:z_0}
\sigma(x_0)=x_0z_0\inv.
\end{equation}
(both sides being defined up to conjugacy by $H$).

\begin{proposition}
\label{p:kgbaction}
After replacing $\sigma$ by another element of $\sigma K$, we
may assume $\sigma$ normalizes $H$.  Define $v\in\Norm_G(H)$ by 
$\sigma(B)=vBv\inv$. Then
\begin{subequations}
\renewcommand{\theequation}{\theparentequation)(\alph{equation}}  
\begin{equation}
\sigma^\dagger(x)=v\inv \sigma(x)v z_0 \quad(x\in X_\theta).
\end{equation}
If
 sigma preserves the base orbit, i.e. $\sigma(K\cdot
B)=K\cdot B$, then after replacing $\sigma$ with another element of $\sigma
K$ we may assume $\sigma$ normalizes $(B,H)$. 
In this case
\begin{equation}
\sigma^\dagger(x)=\sigma(x)z_0.
\end{equation}
\end{subequations}
\end{proposition}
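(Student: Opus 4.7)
The plan is first to reduce to the case where $\sigma$ normalizes $H$, then to compute the $\sigma$-action on $K\bs G/B$ directly in that case, and finally to translate the result back to $\X_\theta$ through the canonical bijection $KgB\leftrightarrow[g\inv\xi_0 g]$. For (b) I will need to sharpen the adjustment further so that $\sigma$ fixes the whole Borel pair $(H,B)$.

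For the initial reduction, $\sigma(H)$ is a $\theta$-stable maximal torus because $\sigma$ commutes with $\theta$; I take as input that in the setup at hand $\sigma(H)$ is $K$-conjugate to $H$. Choosing $k\in K$ with $k\sigma(H)k\inv=H$ and replacing $\sigma$ by $\int(k)\circ\sigma\in\sigma K$ does not alter the induced map on $K\bs G/B$ (since $K$ acts trivially on the left of double cosets), and arranges that $\sigma$ normalizes $H$. The Borel $\sigma(B)\supset H$ is then of the form $vBv\inv$ for some $v\in\Norm_G(H)$, uniquely determined modulo $H$.

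The main computation will be short. The identity $\sigma(KgB)=K\sigma(g)\sigma(B)$ together with the $G$-equivariant identification $G/\sigma(B)\to G/B$, $h\sigma(B)\mapsto hvB$, yields $\sigma(KgB)=K\sigma(g)vB$, corresponding under the bijection to $[v\inv\sigma(g)\inv\xi_0\sigma(g)v]\in\X_\theta$. Rewriting $\sigma(\xi_0)=\xi_0 z_0\inv$ and using that $z_0$ is central gives $\sigma(x)=z_0\inv[\sigma(g)\inv\xi_0\sigma(g)]$, so $[\sigma(g)\inv\xi_0\sigma(g)]=z_0\,\sigma(x)$ and the image becomes $v\inv z_0\sigma(x)v=v\inv\sigma(x)v\,z_0$, which is formula (a).

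The hard part will be (b). The hypothesis $\sigma(K\cdot B)=K\cdot B$ yields $k_1\in K$ with $\sigma(B)=k_1Bk_1\inv$ so that $\int(k_1\inv)\sigma$ fixes $B$; but I must further find $k_2\in K\cap B$ sending $k_1\inv\sigma(H)k_1$ back to $H$, which amounts to showing that any two $\theta$-stable maximal tori of the fixed Borel $B$ are $K\cap B$-conjugate. I plan to establish this by a Hilbert-90-style argument: write such a torus as $bHb\inv$ for $b\in B$ with $b\inv\theta(b)\in H$, and use vanishing of the relevant $\theta$-cohomology on $H$ to replace $b$ by some $b_0\in B\cap K$ representing the same torus. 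Once $\sigma$ is arranged to fix $(H,B)$, the element $v$ lies in $\Norm_G(H)\cap B=H$ and thus acts trivially on $\X=\Norm_{G\delta}(H)/H$, so formula (a) collapses to $\sigma^\dagger(x)=\sigma(x)z_0$.
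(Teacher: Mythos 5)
Your proof of (a) follows the same route as the paper: first modify $\sigma$ by an element of $K$ so that $\sigma(H)=H$ (using that $H$ is a fundamental Cartan and all such are $K$-conjugate), then compute the induced action on $K\backslash G/B$ and transport it back through the bijection with $\X_\theta$. You phrase the bijection directly as $KgB\leftrightarrow [g\inv\xi_0 g]$ whereas the paper runs through the auxiliary space $\P=\{(x,B')\}/G$, but the arithmetic at the end (using $\sigma(\xi_0)=\xi_0z_0\inv$, $z_0\in Z$, and $\sigma(z_0)=z_0\inv$) is identical, and this part of your argument is correct.

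For (b) the paper does not supply a proof (the subsequent Corollary with $\sigma=\theta$ just arranges $\xi_0\in H\delta$ directly), so you are filling an actual gap, and your two-step conjugation plan is the right shape. However, the step justified by ``vanishing of the relevant $\theta$-cohomology on $H$'' is not correct as stated: $H^1(\langle\theta\rangle,H)\cong\pi_0(H^{-\theta})$ is generically nonzero; for the compact Cartan of $SL(2,\R)$ one has $\theta|_H=1$, so $H^{-\theta}=H[2]$ and the cohomology is $\Z/2$. What actually makes the Hilbert-90 step work is that the particular cocycle $b\inv\theta(b)$ arising from $b\in B$ is automatically a coboundary for structural reasons: writing $b=h_1n$ with $h_1\in H$, $n\in N=\text{unipotent radical of }B$, the equation $\theta(b)=b\cdot(b\inv\theta(b))$ together with uniqueness of the $HN$-decomposition in $B$ forces $b\inv\theta(b)=h_1\inv\theta(h_1)$. (One checks that $\theta(n)$ lands in $B$ and, being unipotent, in $N$; no hypothesis $\theta(B)=B$ is actually needed.) You should replace the appeal to vanishing of $H^1(\theta,H)$ by this explicit computation, since the group itself does not vanish.
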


The coset $vH$ of $v$ in $W=\Norm_G(H)/H$ is well defined, and it makes no
difference if we view $v$ as an element of $\Norm_G(H)$ or $W$.

\begin{remark}
After replacing $\sigma$ with another element of $\sigma K$ we may
assume $\sigma(H)=H$, and $\sigma(B\cap K)=B\cap K$. Having done
this, we conclude $v$ normalizes $B\cap K$. The normalizer in $W$ of
$B\cap K$, equivalently $\rho_K$, is very small; in particular 
it is a product of $A_1$ factors.
\end{remark}

Assume $\sigma(K\cdot B)=K\cdot B$, i.e. $v=1$. Then we are in the
following setting. We have an involution $\theta$ and an automorphism
$\sigma$, preserving $(B,H)$, and commuting with $\theta$.  The induced automorphism
of $(W,S)$
also written $\sigma$, has finite order ($S$ is the set of simple reflections).
Finally the corresponding  automorphism $\sigma^\dagger$ of $X_\theta$ 
is $\sigma^\dagger(x)=\sigma(x)z_0$.

The main case of interest is:

\begin{corollary}
\label{c:kgbaction}
Suppose $\sigma=\theta$. After replacing $\theta$ with a
$G$-conjugate, we have
$\sigma^\dagger(x)=\delta(x)$, and $\theta,\delta$ commute. 
\end{corollary}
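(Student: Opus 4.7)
The plan is to apply Proposition~\ref{p:kgbaction}(b) with $\sigma = \theta$ after conjugating $\theta$ within its $G$-orbit to a representative that both preserves $(B,H)$ and has $z_0 = 1$; this will simultaneously deliver the commutation of $\theta$ with $\delta$ and the formula $\sigma^\dagger(x) = \delta(x)$.

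First, since $\theta$ is a Cartan involution it stabilizes some Borel-Cartan pair, so after a $G$-conjugation we may assume $\theta$ preserves $(B,H)$; any such automorphism differs from $\delta$ by inner conjugation by an element of $H$, so $\theta = \int(g_0)\delta$ for some $g_0 \in H$ with $g_0\delta(g_0) \in Z$ (the latter coming from $\theta^2 = 1$). I would then further conjugate by $h \in H$, which retains preservation of $(B,H)$ and changes $g_0$ to $h g_0 \delta(h)^{-1}$; a short abelian computation shows $z_0 = g_0^{-1}\delta(g_0)$ transforms as $z_0 \mapsto (\delta(h)/h)^2 z_0$. Since $z_0 = \delta(g_0)/g_0$ already lies in $(1-\delta)H$ by its very definition, and $H$ is a complex torus and hence divisible, one may take $h$ to be any square root of $g_0^{-1}$; this gives $(\delta(h)/h)^2 = z_0^{-1}$, so the new $z_0$ equals $1$ and the new $g_0 \in H^\delta$.

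With $g_0 \in H^\delta$, $\delta(g_0) = g_0$ yields $\int(g_0)\cdot\delta = \delta\cdot\int(g_0)$, so $\theta = \int(g_0)\delta$ commutes with $\delta$, establishing the second conclusion. Proposition~\ref{p:kgbaction}(b) now applies and gives $\sigma^\dagger(x) = \theta(x) z_0 = \theta(x)$. To identify $\theta(x)$ with $\delta(x)$ on $\X$, take $\xi = g\delta \in \wt\X$ with $g \in \Norm_G(H)$: then $\theta(\xi) = g_0 \delta(g) g_0^{-1}\delta$, and since $g_0 \in H$ has trivial inner action on $\Norm_G(H)/H$, this reduces modulo $H$ to $\delta(g)\delta = \delta(\xi)$. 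Hence $\theta(x) = \delta(x)$ in $\X$, and combining yields $\sigma^\dagger(x) = \delta(x)$.

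The main technical hurdle is the second conjugation step, i.e., solving $(\delta(h)/h)^2 = z_0^{-1}$ in $H$. This rests on two inputs: the divisibility of the complex torus $H$, which supplies square roots; and the observation that $z_0$ automatically belongs to $(1-\delta)H$, so that $z_0^{-1}$ is reachable by the transformation. Without this second input, a central obstruction living in $H^{-\delta}/(1-\delta)H$ would in principle block the reduction.
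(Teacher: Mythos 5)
Your proof is correct, and it follows the same basic route as the paper's: reduce $\theta$ by $G$-conjugacy to a form $\int(g_0)\delta$ preserving $(B,H)$, then invoke Proposition~\ref{p:kgbaction}(b). What you add is a genuine supplement that the paper elides: the paper simply asserts that $\theta$ and $\delta$ commute after the reduction to $\xi_0\in H\delta$, whereas you supply the argument — use divisibility of the complex torus $H$ to solve $(\delta(h)/h)^2 = (g_0^{-1}\delta(g_0))^{-1}$, noting the obstruction in $H^{-\delta}/(1-\delta)H$ vanishes because $g_0^{-1}\delta(g_0)$ is manifestly a $\delta$-coboundary, and thereby push $g_0$ into $H^\delta$. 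That step is worth having on record.

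Two smaller remarks. First, your $z_0 := g_0^{-1}\delta(g_0)$ is \emph{not} the paper's $z_0 = \sigma(\xi_0^{-1})\xi_0$; the latter vanishes for free when $\sigma = \theta = \int(\xi_0)$, since conjugation by $\xi_0$ fixes $\xi_0$, so no work is needed to get the paper's $z_0 = 1$ into Proposition~\ref{p:kgbaction}(b). You are silently reusing the symbol for the failure of $\theta\delta = \delta\theta$, which is a different (and initially possibly nontrivial) quantity. Second, your derivation of $\theta(x) = \delta(x)$ on $\X$ invokes $g_0 \in H^\delta$ to simplify $\theta(\xi) = \xi_0\xi\xi_0^{-1}$, but that assumption is not needed there: writing $\xi_0 = g_0\delta$ with $g_0 \in H$ arbitrary already gives $\theta(\xi) = g_0\,\delta(\xi)\,g_0^{-1}$, which is $H$-conjugate to $\delta(\xi)$ and hence equal to it in $\X$. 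That is exactly the paper's short argument, and it shows the full square-root normalization is only needed for the commutation clause, not for $\sigma^\dagger(x) = \delta(x)$.
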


\begin{proof}
After replacing $\theta$ with a $G$-conjugate we may assume $\xi_0\in
H\delta$. Then $\sigma(B)=\theta(B)=B$, also $z_0=1$, so
$\sigma^\dagger(x)=\sigma(x)=\theta(x)$. But $\theta=\int(\xi_0)$ and $\delta$
differ by an element of $H$, so $\theta(x)=\delta(x)$ (since $X$ has
conjugation by $H$ built in). 
\end{proof}

\begin{proof}[Proof of the Proposition]

We recall a few details of the bijection $X_\theta\leftrightarrow K\backslash G/B$ \cite[Sections 8 and 9]{algorithms}.

Let $\P=\{(x,B')\mid x\in X_\theta,B'\in\B\}/G$.
There are bijections:
\begin{equation}
\begin{aligned}
X_\theta&\longleftrightarrow&\P&\longleftrightarrow K\backslash\B\\
x&\longrightarrow& (x,B)\\
&&(x_0,B')&\longleftarrow K\cdot B'
\end{aligned}
\end{equation}
Since $H$ is a fundamental Cartan subgroup with respect to $\theta$, 
and all such Cartan subgroups are $K$-conjugate, we can modify $\sigma$ by $k$ 
so that $\sigma$ normalizes $H$.

Here is the computation.
$$
\begin{aligned}
X_\theta\ni x&\rightarrow (x,B)\in\P\\
&=(gx_0g\inv,B)\quad (x=gx_0g\inv)\\
&=(x_0,g\inv Bg)\\
&\rightarrow g\inv Bg\in K\backslash\B\\
&\rightarrow \sigma(g\inv Bg)\text{ by the action of }\sigma\text{ on }K\backslash\B\\
&=\sigma(g\inv)vBv\inv \sigma(g)\text{ where }\sigma(B)=vBv\inv\\
&\rightarrow(x_0,\sigma(g\inv)vBv\inv \sigma(g))\in\P\\
&=(v\inv\sigma(g)x_0\sigma(g\inv)v,B)\\
&\rightarrow(v\inv\sigma(g)x_0\sigma(g\inv)v\in X_\theta\\
&=v\inv\sigma(g\sigma(x_0)g\inv)v,\\
&=v\inv\sigma(gx_0z_0\inv g\inv)v\quad(\text{by }\eqref{e:z_0})\\
&=v\inv\sigma(gx_0g\inv)\sigma(z_0\inv)v\\
&=v\inv\sigma(x)\sigma(z_0\inv)v\quad(gx_0g\inv=x)\\
&=v\inv\sigma(x)vz_0\quad(\sigma(z_0\inv)=z_0)\\
\end{aligned}
$$

\end{proof}

\sec{The group $W^\sigma$ and the twisted Hecke algebra $\bH$}
\label{s:hecke}

We continue in the  setting of Section \ref{s:setup},
and we now assume $\sigma$ is an involution.
Let $K=G^\theta$. 

For simplicity let's assume $\sigma(K\cdot B)=K\cdot B$, so $v=1$
(see Proposition \ref{p:kgbaction}).
Then, after replacing $\sigma$ with
an element of $\sigma K$,
we may assume $\sigma$ commutes with $\theta$, and satisfies
$\sigma(B,H)=(B,H)$. Although $\sigma$ may not have finite order,
$\sigma^2=\int(h)$ for some $h\in H$, so $\sigma$ induces
an involution, also denoted $\sigma$, of 
$(W,S)$.
Let $\overline S$ be the set of orbits of the action of $\sigma$ on $S$.

We are primarily interested in the case $\sigma=\theta$.
In this case, 
after conjugating  by $G$ we may assume 
the induced automorphism of $(W,S)$ is $\delta$
(Corollary \ref{c:kgbaction}), and $\{\theta,\delta\}=1$.

If $\kappa\in\overline S$ let $W(\kappa)$ be the subgroup of $G$
generated by $\kappa$. Write $\kappa=\{s_\alpha\}$ or $\{s_\alpha,s_\beta\}$,
with $\alpha,\beta$ simple.
In each case there is a unique long element  $w_\kappa\in W(\kappa)$.
Define $\ell(\kappa)=\ell(w_\kappa)$.
\begin{equation}
W(\kappa)=
\begin{cases}
\ell(\kappa)=1\quad S_2&\sigma(s_\alpha)=s_\alpha\\
\ell(\kappa)=2\quad S_2\times S_2&\sigma(s_\alpha)=s_\beta, \langle\alpha,\ch\beta\rangle=0\\
\ell(\kappa)=3\quad S_3&\sigma(s_\alpha)=s_\beta, \langle\alpha,\ch\beta\rangle=-1\\
\end{cases}
\end{equation}

Lusztig and Vogan define a Hecke algebra $\bH$ over $\Z[u,u\inv]$ ($u$ is an
indeterminate). See the end of \cite[Section 3.1]{lv2012b}.
It has generators $T_w$ ($w\in W^\sigma$) and relations
\begin{equation}
\begin{aligned}
&T_wT_{w'}=T_{ww'}\quad w,w'\in W^\sigma,
\ell(ww')=\ell(w)+\ell(w')\\
&(T_{w_\kappa}+1)(T_{w_\kappa}-u^{\ell(w_\kappa)})=0\quad(\kappa\in
\overline S)
\end{aligned}
\end{equation}

The quotient of the root system by $\sigma$ is itself a root
system (nonreduced if length 3 occurs), with simple roots parametrized
by $\overline S$, and $W^\sigma$ is the Weyl group of this root
system.  In particular $W^\sigma$ is generated by
$\{w_\kappa\mid\kappa\in\overline S\}$, and $\bH$ is generated by
$\{T_{w_\kappa}\mid\kappa\in\overline S\}$.  So in fact $\bH$ has
generators and relations

\begin{equation}
\label{e:quadratic}
\begin{aligned}
&T_{w_\kappa}T_{w'}=T_{w_\kappa w'}\quad
\kappa\in \overline S, \quad
\ell(w_\kappa w')=\ell(w_\kappa)+\ell(w')\\
&(T_{w_\kappa}+1)(T_{w_\kappa}-u^{\ell(w_\kappa)})=0\quad(\kappa\in\overline S)
\end{aligned}
\end{equation}
This makes $\bH$ a quasisplit Hecke algebra \cite[\S4.7]{lv2012b}.

Let $\D=\mathcal Z[x]$ be the subset of $\mathcal Z$ having to do with
$x$. That is $\mathcal Z[x]\subset\X[x]\times\ch\X$ ($\ch\X$ is the
dual KGB space), 
where $\mathcal Z[x]\simeq K_\xi\backslash G/B$, and 
$\D$ is parametrized by the  $K_{\xi}$-invariant local systems on
$K_{\xi}\backslash G/B$. 
Then that $\sigma$ acts on $\D$, and let $\D^\sigma$ be the fixed points.

Lusztig and Vogan define a $\bH$ module $M$, with basis
$\{a_\gamma\mid\gamma\in\D^\sigma\}$. 
We are going to write down formulas for the action of $\bH$ on $M$.


\sec{Extended Cartans and Parameters}
\label{s:extended}

If $\gamma\in\mathcal D^\sigma$  there is an isomorphism
between the representations
parametrized by $\gamma$ and $\sigma(\gamma)$. 
  It is possible to
normalize this isomorphism to have ``square 1'', i.e. in such a way
that there are two choices, $\pm\alpha_\gamma$.  This leads to {\it
  extended parameters}: for each $\gamma\in\D^\sigma$ there are two
extended parameters corresponding to the two choices of
$\alpha_\gamma$.  Write $\wh\gamma$ for an extended parameter
corresponding to $\gamma$.  

Strictly speaking, the module $M$ is spanned by vectors
$a_{\wh\gamma}$ as $\wh\gamma$ runs over extended parameters, and the
Hecke algebra action is naturally defined in these terms.  
If $\wh\gamma^{\pm}$ are the two choices of extension, 
in the module $M$ we have
$a_{\wh\gamma^-}=-a_{\wh\gamma^+}$, and the dimension of $M$ is
$|\D^\sigma|$.

\begin{desideratum}
\label{desideratum}
For each parameter $\gamma\in\D^\sigma$, it is possible to choose one
extended parameter, denoted  $\wh\gamma_+$
so that the formulas of \cite{lv2012b} hold with $\wh\gamma_+$ and
$a_{\wh\gamma_+}$ 
everywhere.
\end{desideratum}

\subsec{Extended Cartans}
We probably don't need this subsection and the next one. They are
vestiges of a version in which we worked in terms of extended
parameters. But it might be helpful to include a few basic facts.

In this section and the next we assume $\sigma=\delta$. Probably this
isn't serious, but in any event in the rest of the paper we only
assume $\sigma$ is an involution.




We are interested in
KGB elements $x\in\X$ which are fixed by $\delta$. A key point is that
if $\delta(x)=x$, and $\xi\in p\inv(x)\in\wt\X$, then
$\delta(\xi)=h\xi h\inv$ for some $h\in H$. We cannot 
assume we can choose $\xi$ so that   $\delta(\xi)=\xi$. 

\begin{lemma}
\label{l:onlyfixedx}
Suppose $\xi\in\wt\X$, and let $x=p(\xi)\in\X$.
The following conditions are equivalent. 
\begin{enumerate}
\item $\theta_\xi$ normalizes $\Hext$, and $(\Hext)^{\theta_\xi}$
  meets both components of $\Hext$;
\item $\delta(x)=x$.
\end{enumerate}
\end{lemma}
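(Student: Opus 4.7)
The plan is to translate both conditions into explicit equations inside $\Gext = G \rtimes \langle \delta\rangle$. Fix a lift $\xi = g\delta$ with $g \in \Norm_G(H)$ (forced by $\xi \in \Norm_{G\delta}(H)$), and interpret $\theta_\xi$ as the inner automorphism $\int(\xi)$ of $\Gext$, so that it acts on $\Hext = H \cup H\delta$ by conjugation.

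First I would unpack condition (1). The map $\int(\xi)$ automatically preserves $H$ (since $\xi$ normalizes it) and sends the coset $G\delta$ to itself (since it is inner in $\Gext$). Thus $\int(\xi)$ normalizes $\Hext$ if and only if $\xi\delta\xi^{-1} \in H\delta$. The identity of $H$ is always fixed, so the condition that $(\Hext)^{\theta_\xi}$ meets both components reduces to the existence of some $h \in H$ with $\xi \cdot h\delta \cdot \xi^{-1} = h\delta$.

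The heart of the argument is a one-line rearrangement of this last equation inside the semidirect product: $\xi h\delta = h\delta\xi$ becomes $h^{-1}\xi h = \delta\xi\delta^{-1} = \delta(\xi)$. This exhibits a single $h \in H$ with $\delta(\xi) = h^{-1}\xi h$, which is exactly the statement that $\delta(\xi)$ is $H$-conjugate to $\xi$, i.e.\ $\delta(x) = x$ in $\X$. So (1) $\Rightarrow$ (2), and conversely any such $h$ retraces the calculation to produce a fixed point of $\int(\xi)$ in $H\delta$.

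The only remaining loose end is deducing the ``normalizes $\Hext$'' clause of (1) from (2). Given $\int(\xi)(h\delta) = h\delta \in H\delta$, one has $H \cdot \int(\xi)(\delta) \cap H\delta \neq \emptyset$, which forces $\int(\xi)(\delta) \in H\delta$ and hence $\int(\xi)(\Hext) = \Hext$ automatically. The main obstacle is purely notational: keeping straight the dual role of $\delta$, both as an element of $\Gext$ conjugating other elements, and as the automorphism $\int(\delta)$ of $G$ applied to them. Once that convention is settled, the proof is essentially the single rewriting above.
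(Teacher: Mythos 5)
Your proof is correct and follows essentially the same route as the paper: both hinge on the rewriting $\xi(h\delta)\xi^{-1}=h\delta\Leftrightarrow h^{-1}\xi h=\delta(\xi)$, and both deduce (1a) from a fixed point in $H\delta$ by noting $\xi\delta\xi^{-1}=\theta_x(h)^{-1}h\delta\in H\delta$. Your version is marginally more streamlined in the forward direction, where the paper separately introduces the two auxiliary elements from conditions (1a) and (1b) and then eliminates one, while you observe directly that the fixed-point equation gives the desired $H$-conjugacy in a single rearrangement.
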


\begin{proof}
Suppose (1) holds.
The second part of (1) says that $\xi(t\delta)\xi\inv=t\delta$ for
some $t\in H$, i.e.
\begin{subequations}
\renewcommand{\theequation}{\theparentequation)(\alph{equation}}  
\begin{equation}
\theta_x(t)(\xi\delta\xi\inv)=t\delta. 
\end{equation}

The first part of (1) says $\xi\delta\xi\inv=h\delta$ for some $h\in
H$, i.e.
\begin{equation}
\delta(\xi)=h\inv\xi
\end{equation}
Plug in $\xi\delta\xi\inv=h\delta$ to (a):
$\theta_x(t)h\delta=t\delta$, so $h\inv=t\inv\theta_x(t)$. 
Then by (b):
\begin{equation}
\delta(\xi)=h\inv\xi=t\inv\theta_x(t)\xi=t\inv\xi t.
\end{equation}
\end{subequations}
Projecting to $\X$ this says $\delta(x)=x$.

Conversely, suppose $\delta(x)=x$. By definition this means
$\delta(\xi)=h\inv\xi h$ for some $h\in H$.
Note that
\begin{equation}
 \delta(\xi)=h\inv\xi h\Leftrightarrow\xi(h\delta)\xi\inv=h\delta.
\end{equation}
In other words the second condition of (1) holds. Also the first
condition holds:
the right hand side gives $\theta_x(h)\xi\delta\xi\inv=h\delta$, i.e.
$\xi\delta\xi\inv\in H\delta$. 
\end{proof}

From now on we will usually assume $\delta(x)=x$.

\begin{definition}
Suppose $\xi\in\wt\X$. Let $x=p(\xi)\in\X$, and assume $\delta(x)=x$.
The extended Cartan defined by $\xi$ is
$\oneH_\xi=(\Hext)^{\theta_\xi}$. It contains $H^{\theta_x}$ as a subgroup
of index $2$, and meets both components of $\Hext$.
\end{definition}
In other words 
\begin{equation}
\oneH_\xi=\langle H^{\theta_x},h\delta\rangle
\end{equation}
where  $\xi(h\delta)\xi\inv=h\delta$, equivalently $h\inv\xi h=\delta(\xi)$.

This is related to 
\cite[Definition 13.5]{unitaryDual}. Note that $h\delta$
normalizes $\Delta^+$.

\subsec{Extended Parameters}
\label{s:extended}

We work only at a fixed regular infinitesimal character, so we fix
$\lambda\in\h^*$, dominant for $\Delta^+$.

By a {\it character} we mean a pair
$(x,\Lambda)$, where $\Lambda$ is 
an $(\h,H^{\theta_x})$-module. We're ignoring the $\rho$-cover; this
isn't hard to fix.
The differential of $\Lambda$  (the $\h$ part) is $\lambda\in\h^*$, so we usually
identify $\Lambda$ with a character of $H^{\theta_x}$.

\begin{definition}
Given $\xi\in\wt\X$, an extended character is a pair $(\xi,\oneLambda)$
where $\oneLambda$ is an
$(\h,\negthinspace\negthinspace\negthinspace\negthinspace\oneH_\xi)$ module. Equivalence of extended characters is by
conjugation by $H$.
\end{definition}

Recall given a parameter $(x,y)$ as usual, compatible with $\lambda$ we obtain a
character $(x,\Lambda)$, as above we think of $\Lambda$ as a character
of $H^{\theta_x}$. 

\begin{definition}
An extended parameter, at infinitesimal character $\lambda$, is a
quadruple $(\xi,y,h\delta,z)$ satisfying the following conditions. Set
$x=p(\xi)\in\X$, and we assume $\delta(x)=x$.
\begin{enumerate}
\item $(x,y)$ is a parameter at $\lambda$, defining a character
$\Lambda$ of $H^{\theta_x}$.
\item $\Lambda^\delta=\Lambda$ (i.e. $\Lambda$ is fixed by $\delta$),
\item $h\delta$ is in the extended group $(\Hext)^{\theta_\xi}$, i.e. $h\delta$ commutes with $\xi$,
\item $z\in\C^*, z^2=\Lambda(h\delta(h))$
\end{enumerate}
Equivalence of extended parameters is generated by conjugation by $H$,
and
\begin{equation}
(\xi,y,h\delta,z)\simeq
(\xi,y,th\delta,\Lambda(t)z)\quad (t\in H^{\theta_x}).
\end{equation}
\end{definition}

\begin{remarkplain}
  Condition (2) implies (but is not equivalent to):
  $\delta^t(y)=y$. So we may as well assume this holds as well as $\delta(x)=x$.
\end{remarkplain}
\begin{proposition}
There is a bijection between equivalence classes of extended
characters and equivalence classes of extended parameters. 
\end{proposition}

The bijection is
\begin{equation}
(\xi,y,h\delta,z)\leftrightarrow(\xi,\oneLambda)
\end{equation}
From left to right, take $\oneLambda|_{H^{\theta_x}}$ to be the
character defined by $(x=p(\xi),y)$, and $\oneLambda(h\delta)=z$. 
Conversely, given $\oneLambda$, choose $y$ so that $(x,y)$ corresponds
to $\oneLambda|_{H^{\theta_x}}$. Choose any
$h\delta\in(\Hext)^{\theta_\xi}$, and let $z=\oneLambda(h\delta)$. 
There are a few straightforward checks that this works.
One of the main points is that if we choose
$h_i\delta\in(\Hext)^{\theta_\xi}$ ($i=1,2$), then $h_2=h_1t$ for
$t\in H^{\theta_x}$, which is taken care of by the equivalence. 

\medskip





\sec{Cayley transforms and cross actions}
\label{s:cayley}

Cayley transforms and cross actions can naturally be defined in terms
of extended parameters.
(This was done in an earlier version of these notes.)
As discussed at the beginning of Section \ref{s:extended}, implicit in \cite{lv2012b} is the assertion that,
for each parameter $\gamma$, there is a choice of extended parameter, 
which we'll label $\wh\gamma^+$, so that the following formulas hold 
with $\wh\gamma^+$ in place of $\gamma$ everywhere {\it except} in cases
{\tt 2i12/2r21}. For these see Section \ref{s:2i12}.

Some of these new  ``Cayley transforms'' are iterated Cayley transforms, but some involve 
a combination of cross actions and Cayley transforms.

\subsec{Length 1}

In length $1$, these are essentially the usual definitions, except in the {\tt 1i2s} case, when the Cayley transform is not defined.

Suppose $\ell(\kappa)=1$, so $\kappa=\{s_\alpha\}$ and
$w_\kappa=s_\alpha$, 
where $\sigma(\alpha)=\alpha$.

In the classical case  $\alpha$ has type
{\tt C+, C-, i, i2, ic, r, r2} or {\tt rn}.
We write these {\tt 1C+,\dots,1rn} to emphasize the length of
$\kappa$. 

Suppose 
$\alpha$ is of  type {\tt 1i2}, so the Cayley transform is double
valued: $\gamma^\alpha_1,\gamma^\alpha_2$. 
Then $\sigma(\alpha)=\alpha$ implies
$\sigma$ preserves the set $\{\gamma^\alpha_1,\gamma^\alpha_2\}$.
This yields two sub-cases in the new setting:
denote these 
{\tt 1i2f} (``fixed'') or {\tt 1i2s} (``switched''), depending on
whether $\sigma$ acts trivially on this set, or interchanges the two members.

Type {\tt 1r1} is similar; the double-valued Cayley transform is
written $\{\gamma_\alpha^1,\gamma_\alpha^2\}$.

\bigskip
\begin{tabular}{|c|c|c|}
\hline 
type & definition & Cayley transform\\
\hline
{\tt 1C+}&$\alpha$ complex, $\theta\alpha>0$&\\
\hline
{\tt 1C-}&$\alpha$ complex, $\theta\alpha<0$&\\
\hline
{\tt 1i1}&$\alpha$ imaginary, noncompact, type 1&$\gamma^\kappa=\gamma^\alpha$\\
\hline
{\tt 1i2f}&
\begin{tabular}{cc}$\alpha$ imaginary, noncompact, type 2\\
$\sigma$ fixes both terms of $\gamma^\alpha$
\end{tabular}
&$\gamma^\kappa=\gamma^\alpha=\{\gamma^\kappa_1,\gamma^\kappa_2\}$
\\\hline
{\tt 1i2s}&
\begin{tabular}{cc}$\alpha$ imaginary, noncompact, type 2\\ 
$\sigma$ switches the two terms of $\gamma^\alpha$
\end{tabular}
&\\
\hline
{\tt 1ic}&$\alpha$ compact imaginary&\\
\hline
{\tt 1r1f}&
\begin{tabular}{cc}$\alpha$ real, parity, type 1\\ 
$\sigma$ switches the two terms of $\gamma^\alpha$
\end{tabular}
&$\gamma_\kappa=\gamma_\alpha=\{\gamma_\kappa^1,\gamma_\kappa^2\}$
\\
\hline
{\tt 1r1s}&
\begin{tabular}{cc}$\alpha$ real, parity, type 1\\ 
$\sigma$ switches the two terms of $\gamma_\alpha$
\end{tabular}&
\\
\hline
{\tt 1r2}&
$\alpha$ real, parity, type 2&$\gamma_\kappa=\gamma_\alpha$\\
\hline
{\tt 1rn}&
$\alpha$ real, non-parity&\\
\hline
\end{tabular}

\medskip

\subsec{Length 2}

Suppose $\alpha\in S, \beta=\sigma(\alpha)\in S$, and
$\langle\alpha,\ch\beta\rangle=0$.
Let $\kappa=\{s_\alpha,s_\beta\}$, so $w_\kappa=s_\alpha s_\beta\in W^\sigma$. 
It is easy to see that $\alpha,\beta$ have the same type with respect
to $\theta$.
Here are the twelve cases as listed in \cite[Section
7.5]{lv2012b}.

In the length $2$ and $3$ cases we include the terminology from \cite{lv2012b} in a separate column.

\medskip
\hskip-3cm
\begin{tabular}{|c|c|c|c|}
\hline 
type &LV terminology& definition & Cayley transform\\
\hline
{\tt 2C+}&two-complex ascent&
\begin{tabular}{c}
$\alpha,\beta$ complex $\theta\alpha>0$\\ $\theta\alpha\ne\beta$  
\end{tabular}
&\\
\hline
{\tt 2C-}&two-complex ascent&
\begin{tabular}{c}
$\alpha,\beta$ complex $\theta\alpha<0$\\$\theta\alpha\ne\beta$
\end{tabular}&\\
\hline
{\tt 2Ci}\,$^*$&two-semiimaginary ascent&$\alpha,\beta$ complex, $\theta\alpha=\beta$&
$\gamma^\kappa=s_\alpha\times\gamma=s_\beta\times\gamma$\\
\hline
{\tt 2Cr}\,$^*$&two-semireal descent&$\alpha,\beta$ complex,$\theta\alpha=-\beta$&
$\gamma_\kappa=s_\alpha\times\gamma=s_\beta\times\gamma$
\\
\hline
{\tt 2i11}&
\begin{tabular}{c}
two-imaginary noncpt \\type I-I ascent
\end{tabular}&
\begin{tabular}{c}
$\alpha,\beta$ noncpt imaginary, type 1\\
$(\gamma^\alpha)^\beta$ single valued
\end{tabular}&$\gamma^\kappa=(\gamma^\alpha)^\beta$
\\
\hline
{\tt 2i12}\,$^\dagger$&
\begin{tabular}{c}
two-imaginary noncpt \\type I-II ascent
\end{tabular}&
\begin{tabular}{c}
$\alpha,\beta$ noncpt imaginary, type 1\\
$(\gamma^\alpha)^\beta$ double valued
\end{tabular}&
$\gamma^\kappa=\{\gamma^{\kappa}_1,\gamma^{\kappa}_2\}=(\gamma^\alpha)^\beta$\\
\hline
{\tt 2i22}&
\begin{tabular}{c}
two-imaginary noncpt \\type II-II ascent
\end{tabular}&
\begin{tabular}{c}
$\alpha,\beta$ noncpt imaginary, type 1\\
$(\gamma^\alpha)^\beta$ has $4$ values
\end{tabular}&
$\gamma^\kappa=\{\gamma^\kappa_1,\gamma^\kappa_2\}=\{\gamma^{\alpha,\beta}\}^\sigma$\\
\hline
{\tt 2r22}&
\begin{tabular}{c}
two-real \\type II-II descent
\end{tabular}&
\begin{tabular}{c}
$\alpha,\beta$ real, parity, type 2\\
$(\gamma_\alpha)_\beta$ single valued
\end{tabular}&
$\gamma_\kappa=(\gamma_{\alpha})_\beta$
\\
\hline
{\tt 2r21}&
\begin{tabular}{c}
two-real \\type II-I descent
\end{tabular}&
\begin{tabular}{c}
$\alpha,\beta$ real, parity, type 2\\
$(\gamma_\alpha)_\beta$ double valued
\end{tabular}&
$\gamma_\kappa=\{\gamma_{\kappa}^1,\gamma_{\kappa}^2\}=(\gamma_\alpha)_\beta$\\
\hline
{\tt 2r11}&
\begin{tabular}{c}
two-real \\type I-I descent
\end{tabular}&
\begin{tabular}{c}
$\alpha,\beta$ real, parity, type 2\\
$(\gamma_\alpha)_\beta$ has $4$ values
\end{tabular}&
$\gamma_\kappa=\{\gamma_\kappa^1,\gamma_\kappa^2\}=\{(\gamma_{\alpha})_\beta\}^\sigma$\\
\hline
{\tt 2rn}&two-real nonparity ascent&$\alpha,\beta$ real, nonparity&\\
\hline
{\tt 2ic}&two-imaginary compact  descent&$\alpha,\beta$ compact imaginary&\\
\hline
\end{tabular}

\medskip

\hskip-3cm $*$: defect=1 (see Definition \ref{d:defect}).

\hskip-3cm $\dagger$: See Section \ref{s:2i12}

\subsec{Length 3}

Suppose $\alpha\in S, \beta=\sigma(\alpha)\in S$, and
$\langle\alpha,\ch\beta\rangle\ne0$ (equivalently $\pm1$). 
In this case 
$w_\kappa=s_\alpha s_\beta s_\alpha=s_\beta s_\alpha s_\beta\in W^\sigma$. 

Again it is easy to see that $\alpha,\beta$ have the same type with respect
to $\theta$. 
Here are the cases.

\centerline{\bf\large Length 3}

\medskip

\hskip-4cm
\begin{tabular}{|c|c|c|c|}
\hline 
type &LV terminology& definition & Cayley transform\\
\hline
{\tt 3C+}&three-complex ascent&$\alpha,\beta$ complex $\theta\alpha>0$, $\theta\alpha\ne\beta$&\\
\hline
{\tt 3C-}&three-complex descent&$\alpha,\beta$ complex $\theta\alpha<0$, $\theta\alpha\ne\beta$&\\
\hline
{\tt 3Ci}\,$^*$&three-semiimaginary ascent&$\alpha,\beta$ complex, $\theta\alpha=\beta$&
$\gamma^\kappa=(s_\alpha\times\gamma)^\beta\cap(s_\beta\times\gamma)^\alpha$\\
\hline
{\tt 3Cr}\,$^*$&three-semireal descent&$\alpha,\beta$ complex, $\theta\alpha=-\beta$&
$\gamma_\kappa=(s_\alpha\times\gamma)_\beta\cap(s_\beta\times\gamma)_\alpha$\\
\hline
{\tt 3i}\,$^*$&
three imaginary noncompact ascent&
$\alpha,\beta$ noncpt imaginary, type 1
&$\gamma^\kappa=s_\alpha\times\gamma^\beta=s_\beta\times\gamma^\alpha$
\\
\hline
{\tt 3r}\,$^*$&
three-real descent&
$\alpha,\beta$ real, parity, type 2
&$\gamma_\kappa=s_\alpha\times\gamma_\beta=s_\beta\times\gamma_\alpha$
\\
\hline
{\tt 3rn}&three-real non-parity ascent&
$\alpha,\beta$ real, nonparity&\\
\hline
{\tt 3ic}&three-imaginary compact descent&
$\alpha,\beta$ noncompact imaginary&\\
\hline
\end{tabular}

\medskip

\hskip-4cm $*$: defect=1 (see Definition \ref{d:defect}).

p
\bigskip

The type of $\kappa$ depends on a parameter $\gamma\in\D^\sigma$. 
We say $\kappa$ is of a given type with respect to $\gamma$.

\begin{definition}
\label{d:type}
If $\kappa\in\overline S$ and $\gamma\in\D^\sigma$ write
$t_\gamma(\kappa)$ for the type of $\kappa$ with respect to $\gamma$.  
\end{definition}

For the notion of ascent/descent in these tables see Lemma
\ref{l:tauinv}.

\begin{definition}
\label{d:tau}
The $\tau$-invariant of $\gamma\in\D^\sigma$ is

$$\tau(\gamma)=\{\kappa\in\overline S\mid \kappa \text{ is a descent for }\gamma\}.
$$
  
\end{definition}

Here is a list of the 
$10+12+8=30$ types:

\begin{mytable}\
\hfil\break\newline
\label{table:types}
\begin{tabular}{|l|l|l|}
\hline
$\ell(\kappa)$ & ascent $(\kappa\not\in\tau(\gamma))$ & descent $(\kappa\in\tau(\gamma))$ \\
\hline
$1$ & 1C+, 1i1, 1i2f, 1i2s, 1rn& 1C-, 1r1f, 1r1s, 1r2, 1ic\\
\hline
$2$ & 2C+, 2Ci, 2i11, 2i12, 2i22, 2rn & 2C-, 2Cr, 2r11, 2r21, 2r22, 2ic\\
\hline
$3$ & 3C+, 3Ci, 3i, 3rn & 3C--, 3Cr, 3r, 3ic\\
\hline
\end{tabular}
\end{mytable}
\noindent $*$: defect=1 (see Definition \ref{d:defect}).

\sec{Cases {\tt 2i12} and {\tt2r21}}
\label{s:2i12}

Recall we need to address the issue, discussed at the beginning of
Section \ref{s:cayley}, of types {\tt 2i12/2r21}.

Suppose $\gamma\in\D^\sigma$, and $\kappa=\{\alpha,\beta\}$ is of type
{\tt 2i12} with respect to $\gamma$.
Then $s_\alpha\times\gamma=s_\beta\times\gamma$ is also of type {\tt
2i12}. Label these two parameters $\{\gamma_1,\gamma_2\}$. 
Then, on the level of non-extended parameters,
$(\gamma_1)^\kappa=(\gamma_2)^\kappa$ is double-valued, label 
these two parameters $\lambda_1,\lambda_2$. 

Thus we are given two {\it unordered} pairs $\{\gamma_1,\gamma_2\}$ and
$\{\lambda_1,\lambda_2\}$; $\kappa$ is of type {\tt 2i12} and {\tt 2r21}
respectively. 

We want to define the extensions inductively, starting on the
fundamental Cartan. 
Assume that we have already chosen an  extension
$\wh\gamma^+_1$ of $\gamma_1$.

The  Cayley transform of $\wh\gamma_1^+$ by $\kappa$ is a well defined
pair of extended parameters (see the old version of these notes). 
Use this to define the $+$ labelling on the extended parameters for $\lambda_i$:
\begin{subequations}
\renewcommand{\theequation}{\theparentequation)(\alph{equation}}  
\begin{equation}
(\wh\gamma_1^+)^\kappa=\{\wh\lambda_1^+,\wh\lambda_2^+\}
\end{equation}
Now fix an extension $\wh\gamma_2^+$ of $\gamma_2$. 
Then  $(\wh\gamma_2^+)^\kappa$ is either $\{\wh\lambda_1^+,\wh\lambda_2^-\}$ or 
$\{\wh\lambda_1^-,\wh\lambda_2^+\}$. 
(This computation was done in the old notes, in $SL(4,\R)$). 
After switching $\lambda_1,\lambda_2$ if necessary,
we can assume 
\begin{equation}
(\wh\gamma_2^+)^\kappa=\{\wh\lambda_1^+,\wh\lambda_2^-\}
\end{equation}
Alternatively, define $\wh\gamma^+_2$ by the requirement:
$(\wh\gamma_2^+)^\kappa=\{\wh\lambda_1^+,\wh\lambda_2^-\}$
(then $(\wh\gamma_2^-)^\kappa=\{\wh\lambda_1^-,\wh\lambda_2^+\}$).

Clearly the chosen extensions of $\lambda_1,\lambda_2$ depend on the
extensions of $\gamma_1,\gamma_2$, and also the fact that we've chosen 
an order of each pair
$(\gamma_1,\gamma_2)$ and $(\lambda_1,\lambda_2)$. 
For example,
suppose we switch $\gamma_1,\gamma_2$, but keep the same extensions of
these two parameters.  This would induce new definitions of
$\wh\lambda_1^+,\wh\lambda_2^+$: $\wh\lambda_1^+$ wouldn't change, but what we labelled
$\wh\lambda_2^+$ before would now be labelled $\wh\lambda_2^-$. See the table at the end of this section. 

Conclusion: some additional information is needed to determine unique
preferred extensions for $\lambda_1,\lambda_2$. 
\end{subequations}

\noindent{\it What was here before was incorrect, and I don't know 
how to fix it at the moment. So I'm leaving this as a conjecture.}





\begin{conjecture}
\label{c:distinguish}
Assume $\kappa=\{\alpha,\beta\}$, where $\alpha,\beta$ are orthogonal
and interchanged by $\sigma$. 
Suppose $\kappa$ is of type {\tt 2i12} or {\tt 2r21} for parameters $\gamma$ and 
$\gamma'=s_\alpha\times\gamma=s_\beta\times\gamma$.
There is a canonical way to distinguish $\gamma,\gamma'$, and so
write them as an ordered pair $(\gamma_1,\gamma_2)$. 
\end{conjecture}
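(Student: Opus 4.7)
The plan is to break the symmetry between $\gamma$ and $\gamma' = s_\alpha \times \gamma$ by exhibiting a natural $\{\pm 1\}$-valued function on the pair that takes different values on the two elements. First I fix a pinning and choose lifts $(x,y), (x, s_\alpha \times y)$ of $\gamma, \gamma'$. Since $\alpha$ and $\beta$ are noncompact imaginary of type 1 and $\sigma$-conjugate, we have $\alpha|_{H^{\theta_x}} = \beta|_{H^{\theta_x}}$, and the corresponding characters $\Lambda, \Lambda'$ of $H^{\theta_x}$ satisfy $\Lambda' = \Lambda \otimes \alpha$. The question therefore reduces to labelling $\Lambda$ versus $\Lambda \otimes \alpha$ canonically.

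Next I would search for a distinguishing invariant. Evaluation at $m_\alpha m_\beta \in H^{\theta_x}$ does not help: this element is $\sigma$-fixed, but $\Lambda(m_\alpha m_\beta) = \Lambda(m_\alpha)^2 = 1 = \Lambda'(m_\alpha m_\beta)$, so both characters agree there. The natural refinement is to pass to the extended Cartan $\oneH_\xi$ and evaluate the extended character $\oneLambda$ on a carefully chosen element $h\delta \in (\Hext)^{\theta_\xi}$ built from $\alpha, \beta$ (for instance a representative of $m_\alpha m_\beta \cdot \delta$), chosen so that the value of $\oneLambda$ is well defined modulo the equivalence in the definition of an extended parameter. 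A parallel approach is to induct downward from the Cayley transform: the pair $\{\lambda_1, \lambda_2\}$ sits on a more split Cartan where $\alpha, \beta$ are real, carrying extra structure (a larger component group and a non-simple cross action by $s_\alpha s_\beta$). If $\{\lambda_1, \lambda_2\}$ can be ordered canonically there, then pulling back via $c^\kappa$ induces an ordering on $\{\gamma_1, \gamma_2\}$, consistency of which one verifies using the Cayley formulas of Section \ref{s:cayley}.

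The hard part, and evidently what defeated the previous attempt, is that $\{\gamma, \gamma'\}$ admits an honest $\Z/2$-symmetry — the cross action by $s_\alpha = s_\beta$ on $\D^\sigma$ — so any canonical labelling must come from external structure that breaks this symmetry. The above plan will succeed only if either (a) the extended Cartan carries an intrinsic $\Z/2$-invariant not visible at the level of unextended $\Lambda$, or (b) the Cayley-transformed pair inherits asymmetry from a more fundamental choice, such as the fixed lift $\xi_0 \in \wt\X$ used to identify $\X_\theta$ with $K\backslash G/B$. My expectation is that in full generality no purely intrinsic ordering exists, and that the correct form of the conjecture involves propagating a single globally-fixed ordering on the fundamental Cartan through the block via the inductive Cayley rule — the same data that Desideratum \ref{desideratum} postulates for the full labelling of extended parameters.
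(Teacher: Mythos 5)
The statement you were asked to prove is, in the paper, explicitly labelled a \emph{conjecture} and is not proved: the sentence immediately preceding it reads ``What was here before was incorrect, and I don't know how to fix it at the moment. So I'm leaving this as a conjecture.'' So there is no proof in the paper to compare against, and you should not be expected to produce one.

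Your analysis, while not a proof, correctly identifies the essential obstruction: the pair $\{\gamma,\gamma'\}$ carries an honest $\Z/2$-symmetry (cross action by $s_\alpha = s_\beta$), so any labelling must break it using structure not visible at the level of unextended parameters. Your observation that $\Lambda(m_\alpha m_\beta) = \Lambda'(m_\alpha m_\beta)$ is a genuine check ruling out the most naive distinguishing invariant, and your two proposed escape routes — an invariant on the extended Cartan, or propagation of asymmetry through the Cayley transform from a fixed reference point — are exactly the kinds of mechanisms the paper itself gestures at: the preceding discussion in Section \ref{s:2i12} uses exactly the Cayley-induction scheme, and Desideratum \ref{desideratum} posits the global consistent choice you invoke at the end. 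Your closing suspicion that no purely intrinsic ordering exists, and that the ``canonical'' in the conjecture must be read as ``canonical given a base choice propagated through the block,'' is consistent with the paper's framing and with the fact that the author abandoned a direct approach. What would be needed to actually resolve the conjecture, and what neither you nor the paper supplies, is a \emph{consistency} argument: a proof that the Cayley-propagated labelling is independent of the path chosen from the fundamental Cartan. You flag this (``consistency of which one verifies'') but do not attempt it; the paper also flags it (``There might be an issue of consistency here\dots Let's ignore this issue for now, and hope it works''). That step is the genuine open gap.
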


Assuming this, start with the ordered pair $(\gamma_1,\gamma_2)$, and 
assume we have chosen $\wh\gamma_1^+$. 
Then the Cayley transform $(\gamma_1)^\kappa=(\gamma_2)^\kappa$ is an ordered pair
$(\lambda_1,\lambda_2)$. 
Define
$\wh\lambda_1^+,\wh\lambda_2^+$ by (a): $(\wh\gamma_1^+)^\kappa=\{\wh\lambda_1^+,\lambda_2^+\}$.
Furthermore define $\wh\gamma_2^+$ by the requirement: 
$(\wh\gamma_2^+)^\kappa=\{\wh\lambda_1^+,\wh\lambda_2^-\}$ (exactly one of
the two extensions of $\wh\gamma_2$ satisfy this). 

Clearly the choice of these extensions  depends on the fact that $(\gamma_1,\gamma_2)$ and
$(\lambda_1,\lambda_2)$ are ordered pairs.

There might be an issue of consistency here:
if we've already chosen $\wh\gamma_2^+$, it may conflict with the one
just made. (Similar issues possibly could arise elsewhere.) 
Let's ignore this issue for now, and hope it works.
If so, we have chosen a preferred extension of each parameter, and all
formulas are in terms of this extension. See the Desideratum
\ref{desideratum}.

\bigskip

\subsec{A Table}

It is possible that the ordering we've chosen in the previous section, while natural,
isn't the right one. Hopefully this table will never be needed, but it
shows the affect of different choices.

Assume we've decided on an ordering of $\gamma_1,\gamma_2$, 
and extensions of these two parameters, labelled $+$. This uniquely
determines an ordering of $\lambda_1,\lambda_2$, and  extensions of
these. This is the first row of the table.

The subsequent rows show the affect of the choices. For example,
suppose we keep the same order of $\gamma_1,\gamma_2$, but choose the
other extension of $\gamma_2$. 
Then since
$$
\begin{aligned}
&\wh\gamma_1^+\rightarrow\wh\lambda_1^+,\wh\lambda_2^+\\
&\wh\gamma_2^+\rightarrow\wh\lambda_1^+,\wh\lambda_2^-
\end{aligned}
$$
with our new choices we have
$$
\begin{aligned}
&\wh\gamma_1^+\rightarrow\wh\lambda_1^+,\wh\lambda_2^+\\
&\wh\gamma_2^-\rightarrow\wh\lambda_1^-,\wh\lambda_2^+
\end{aligned}
$$
meaning the sign has changed on the first member of the target pair.
So we should change their order:
$$
\begin{aligned}
&\wh\gamma_1^+\rightarrow\wh\lambda_2^+,\wh\lambda_1^+\\
&\wh\gamma_2^-\rightarrow\wh\lambda_2^+,\wh\lambda_1^-
\end{aligned}
$$
This amounts to switching $\lambda_1,\lambda_2$,
giving the second row of the table.

\bigskip

\begin{tabular}{|c|c||c|c|}
\hline
$\wh\gamma_1^+$&  $\wh\gamma_2^+$&  $\wh\lambda_1^+$&  $\wh\lambda_2^+$\\\hline
\hline
$\wh\gamma_1^+$&  $\wh\gamma_2^-$&  $\wh\lambda_2^+$&  $\wh\lambda_1^+$\\\hline
$\wh\gamma_1^-$&  $\wh\gamma_2^+$&  $\wh\lambda_2^-$&  $\wh\lambda_1^-$\\\hline
$\wh\gamma_1^-$&  $\wh\gamma_2^-$&  $\wh\lambda_1^-$&  $\wh\lambda_2^-$\\\hline
$\wh\gamma_2^+$&  $\wh\gamma_1^+$&  $\wh\lambda_1^+$&  $\wh\lambda_2^-$\\\hline
$\wh\gamma_2^+$&  $\wh\gamma_1^-$&  $\wh\lambda_2^-$&  $\wh\lambda_1^+$\\\hline
$\wh\gamma_2^-$&  $\wh\gamma_1^+$&  $\wh\lambda_2^+$&  $\wh\lambda_1^-$\\\hline
$\wh\gamma_2^-$&  $\wh\gamma_1^-$&  $\wh\lambda_1^-$&  $\wh\lambda_2^+$\\\hline
\end{tabular}

\subsec{The sign $\epsilon(\gamma,\lambda)$}

The sign which arises in the {\tt 2i12/2r21} cases appears frequently, so we need some notation for it. 
We need to refer forward to the definition of $\kappaarrow\gamma\lambda$ (Definition \ref{d:kappaarrow}).

\begin{definition}
\label{d:epsilon}
Suppose $\gamma,\lambda\in\D^\sigma$, $\kappa\in\overline S$, and $\kappaarrow\gamma\lambda$. 

If $t_\gamma(\kappa)\ne${\tt 2r21} define $\epsilon(\gamma,\lambda)=1$. 

Assume $t_\gamma(\kappa)=${\tt 2r21}, so $t_\lambda(\kappa)=${\tt
  2i12}. By the discussion at the beginning of this section,
$\gamma$ and $\lambda$ are members of  ordered pairs $(\gamma_1,\gamma_2)$ and 
$(\lambda_1,\lambda_2)$, respectively. Define:
\begin{equation}
\label{e:epsilon}
\epsilon(\gamma_i,\lambda_j)=
\begin{cases}
-1&i=j=2\\
1&\text{otherwise}
\end{cases}
\end{equation}

\end{definition}

\sec{Formulas for the $\bH$ action on $M$}
\label{s:formulas}

Implicit in the following formulas is the fact that we have chosen an
extension of each parameter as discussed in Section \ref{s:extended}.
For each $\gamma\in \D^\sigma$ we have
chosen an extension $\wh\gamma^+$;
in the following formulas each
$a_{\gamma}$ is really $a_{\wh\gamma^+}$. 

\medskip

\subsec{Length 1}

Suppose  $\sigma(\alpha)=\alpha$, and  $\gamma\in\mathcal D^\sigma$. Then $T_{w_\kappa}(\gamma)$ is given
by the usual formulas, taking the quotients in types {\tt 1i2s,1r1s}
into account. The first column is $t_\gamma(\kappa)$, the type of
$\kappa$ with respect to $\gamma$.

\begin{enumerate}

\item[{\tt1C+}:] $T_{w_\kappa}(a_\gamma)=a_{w_\kappa\times\gamma}$

\item[{\tt 1C-:}] $T_{w_\kappa}(a_\gamma)=(u-1)a_\gamma+ua_{w_\kappa\times\gamma}$

\item[{\tt 1i1:}] $T_{w_\kappa}(a_\gamma)=a_{w_\kappa\times\gamma}+a_{\gamma^\kappa}$

\item[{\tt 1i2f:}] $T_{w_\kappa}(a_\gamma)=a_\gamma+(a_{\gamma^\kappa_1}+a_{\gamma^\kappa_2})$

\item[{\tt 1i2s:}] $T_{w_\kappa}(a_\gamma)=-a_\gamma\qquad$ 

\item[{\tt 1ic:}] $T_{w_\kappa}(a_\gamma)=ua_\gamma$

\item[{\tt 1r1f:}] $T_{w_\kappa}(a_\gamma)=(u-2)a_\gamma+(u-1)(a_{\gamma_\kappa^1} +
  a_{\gamma_\kappa^2})$ 

\item[{\tt 1r1s:}] $T_{w_\kappa}(a_\gamma)=ua_\gamma \qquad$ 

\item[{\tt 1r2:}] $T_{w_\kappa}(a_\gamma)=(u-1)a_\gamma-a_{w_\kappa\times\gamma}+(u-1)a_{\gamma_\kappa}$

\item[{\tt 1rn:}] $T_{w_\kappa}(a_\gamma)=-a_\gamma$
\end{enumerate}

\subsec{Length 2}

\medskip

\begin{enumerate}

\item[{\tt 2C+:}] $T_{w_\kappa}( a_\gamma)=a_{w_\kappa\times\gamma}$

\item[{\tt 2C-:}] $T_{w_\kappa}( a_\gamma)=(u^2-1) a_\gamma+u^2a_{w_\kappa\times \gamma}$

\item[{\tt 2Ci:}] $T_{w_\kappa}( a_\gamma)=u a_\gamma+(u+1)a_{\gamma^\kappa}$ 

\item[{\tt 2Cr:}] $T_{w_\kappa}( a_\gamma)=(u^2-u-1) a_\gamma+(u^2-u)a_{\gamma_\kappa}$

\item[{\tt 2i11:}] $T_{w_\kappa}( a_\gamma)=a_{w_\kappa\times \gamma}+a_{\gamma^\kappa}$

\item[{\tt 2i12:}] $T_{w_\kappa}( a_{\gamma})= a_{\gamma}+\displaystyle\sum_{\lambda|\kappaarrow\lambda\gamma}\epsilon(\lambda,\gamma)a_\lambda$

\item[{\tt 2i22:}] $T_{w_\kappa}( a_\gamma)= a_\gamma+ (a_{\gamma^\kappa_1}+ a_{\gamma^\kappa_2})$
\item[{\tt 2r22:}] $T_{w_\kappa}( a_\gamma)=(u^2-1) a_\gamma-a_{w_\kappa\times \gamma}+(u^2-1)a_{ \gamma_\kappa}$ 

\item[{\tt 2r21:}] $T_{w_\kappa}( a_\gamma)=(u^2-2)a_\gamma+(u^2-1)\displaystyle\sum_{\lambda|\kappaarrow\gamma\lambda}\epsilon(\gamma,\lambda)a_\lambda$
\item[{\tt 2r11:}] $T_{w_\kappa}( a_\gamma)=(u^2-2) a_\gamma+(u^2-1)( a_{\gamma_\kappa^1}+ a_{\gamma_\kappa^2})$

\item[{\tt 2rn:}] $T_{w_\kappa}( a_\gamma)=- a_\gamma$ 

\item[{\tt 2ic:}] $T_{w_\kappa}( a_\gamma)=u^2 a_\gamma$
\end{enumerate}

\begin{remarkplain}
In the {\tt 2i12} case, if $\kappaarrow\lambda\gamma$ (Definition \ref{d:kappaarrow})
recall $\lambda,\gamma$ occur in ordered pairs  $(\lambda_1,\lambda_2)$
and  $(\gamma_1,\gamma_2)$
(Section \ref{s:2i12}). With $\epsilon(\lambda,\gamma)$ given by Definition \ref{d:epsilon}, the 
stated formula in this case is shorthand for:
\begin{enumerate}
\item[] $T_{w_\kappa}( a_{\gamma_1})= a_{\gamma_1}+ (a_{\lambda_1}+ a_{\lambda_2})$ 
\item[] $T_{w_\kappa}( a_{\gamma_2})= a_{\gamma_2}+ (a_{\lambda_1}- a_{\lambda_2})$,
\end{enumerate}
We could state the other formulas using $\epsilon(\gamma,\lambda)$ as well, for example if $t_\gamma(\kappa)=${\tt 2r22}. 
But this doesn't seem worth it.
\end{remarkplain}

\subsec{Length 3}




\begin{enumerate}
\item[{\tt 3C+:}] $T_{w_\kappa}(a_\gamma)=w_\kappa\times a_\gamma$ 
\item[{\tt 3C-:}] $T_{w_\kappa}(a_\gamma)=(u^3-1)a_\gamma+u^3(a_{w_\kappa\times a_\gamma})$ 
\item[{\tt 3Ci:}]  $T_{w_\kappa}(a_\gamma)=ua_\gamma+(u+1)a_{\gamma^\kappa}$
\item[{\tt 3Cr:}]  $T_{w_\kappa}(a_\gamma)=(u^3-u-1)a_\gamma+(u^3-u)a_{\gamma_\kappa}$
\item[{\tt 3i:}]  $T_{w_\kappa}(a_\gamma)=ua_\gamma+(u+1)a_{\gamma^\kappa}$
\item[{\tt 3r:}]  $T_{w_\kappa}(a_\gamma)=(u^3-u-1)a_\gamma+(u^3-u)a_{\gamma_\kappa}$ 
\item[{\tt 3rn:}] $T_{w_\kappa}(a_\gamma)=-a_\gamma$ 
\item[{\tt 3ic:}] $T_{w_\kappa}(a_\gamma)=u^3a_\gamma$
\end{enumerate}

\begin{remarkplain}
In terms of extended parameters, the {\tt 2i12/2r21} cases are
simpler. 
Suppose $\kappa$ is of type {\tt 2i12} with respect to an 
ordinary parameter $\gamma$. 

Suppose $\wh\gamma$ is an extension of $\gamma$.
On the level of extended parameters $\wh\gamma$ has a well defined Cayley
transform 
$(\wh \gamma)^\kappa$, which is an unordered pair of extended
parameters. 
Then:
$$
T_{w_\kappa}(a_{\wh\gamma})= a_{\wh\gamma}+\sum_{\wh\lambda\in(\wh\gamma)^\kappa}a_{\wh\lambda}
$$
If we write $-\wh\gamma$ for the opposite extension then
$(-\wh\gamma)^\kappa$ is the same set of two elements, with the sign
changed on one of them. There are no choices involved here.
\end{remarkplain}

We have the usual notion of the W-graph associated to this Hecke
algebra action.

\begin{definition}
\label{d:kappaarrow}
Suppose $\kappa\in\overline S,\gamma,\lambda\in\D^\sigma$, and
$\kappa\in\tau(\gamma)$. Then we say $\kappaarrow\gamma\lambda$ if
$\kappa\not\in\tau(\lambda)$, and $a_\lambda$ appears in
$T_{w_\kappa}(a_\gamma)$. 
\end{definition}

\sec{Kazhdan-Lusztig-Vogan algorithm}
\label{s:klv}

We use a hybrid notation combining Fokko's notes {\em Implementation of Kazhdan-Lusztig Algorithm},
and \cite{lv2012b}.

Recall $\bH$ is an algebra over $\Z[u,u\inv]$, with generators 
parametrized by $\overline S$. Also $M$ is a $\bH$-module, 
with $\Z[u,u\inv]$-basis $\{a_\gamma\mid\gamma\in\D^\sigma\}$.

Write $\bD$ for the canonical involution of $M$. It satisfies
\begin{equation}
\bD(um)=u\inv\bD(m).
\end{equation}

The order on $\D^\sigma$ is defined in \cite[5.1]{lv2012b}, and length $\ell(\gamma)$
is inherited from $\D$. 

\begin{theorem}[\cite{lv2012b}, Theorem 5.2]
\label{t:Cdelta}
There is a unique basis $\{C_\delta\mid\delta\in \D^\sigma\}$ of $M$ 
satisfying the following conditions. There are polynomials $P^\sigma(\gamma,\delta)\in\Z[u]$ such that
\begin{equation}
C_\delta=\sum_{\gamma}P^\sigma(\gamma,\delta)a_\gamma,
\end{equation}
and:
\begin{enumerate}
\item $\bD(C_\gamma)=u^{-\ell(\gamma)}C_\gamma$;
\item $P^\sigma(\gamma,\delta)\ne 0$ implies $\gamma\le\delta$;
\item $P^\sigma(\gamma,\gamma)=1$
\item $\deg(P^\sigma(\gamma,\delta))\le \frac12(\ell(\delta)-\ell(\gamma)-1)$.
\end{enumerate}
\end{theorem}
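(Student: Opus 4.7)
The plan is to follow the standard Kazhdan--Lusztig template, adapted to the quasisplit Hecke algebra $\bH$ and its module $M$. The argument has three main ingredients: (i) a precise definition of the bar involution $\bD$ on $M$ and a computation of $\bD(a_\gamma)$ in triangular form, (ii) an inductive existence proof, (iii) a uniqueness argument based on the degree bounds.

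First I would construct $\bD$ as the unique $\Z$-linear involution of $M$ that is $\bH$-semilinear (i.e.\ $\bD(h\cdot m)=\bar h\cdot \bD(m)$, where $\bar{\phantom{m}}$ is the bar involution of $\bH$ satisfying $\bar u=u^{-1}$ and $\overline{T_{w_\kappa}}=u^{-\ell(\kappa)}T_{w_\kappa}+(u^{-\ell(\kappa)}-1)$ on the generators) and normalized by $\bD(a_{\gamma_0})=a_{\gamma_0}$ for each parameter $\gamma_0$ of minimal length in its block. Using the explicit action formulas of Section \ref{s:formulas} one checks that this prescription is consistent: applying $\overline{T_{w_\kappa}}$ to both sides of the quadratic relation \eqref{e:quadratic} and using $\bD$-semilinearity gives a recursion allowing one to compute $\bD(a_\gamma)$ by induction on $\ell(\gamma)$, starting from minimal-length $a_{\gamma_0}$. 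The output is a formula
\begin{equation*}
\bD(a_\gamma)=u^{-\ell(\gamma)}\sum_{\gamma'\le\gamma}R^\sigma(\gamma',\gamma)\,a_{\gamma'},
\end{equation*}
with $R^\sigma(\gamma,\gamma)=1$ and $R^\sigma(\gamma',\gamma)\in\Z[u,u^{-1}]$. The only delicate point is to verify that the signs $\epsilon(\gamma,\lambda)$ from the \texttt{2i12/2r21} cases enter consistently so that $\bD^2=\mathrm{id}$; this is the twisted analogue of Lusztig--Vogan \cite{lv2012b}.

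For existence I would induct on $\ell(\delta)$. When $\ell(\delta)=0$ set $C_\delta=a_\delta$; conditions (1)--(4) are trivial. For the inductive step, choose $\kappa\in\overline S$ and $\gamma<\delta$ with $\kappaarrow\gamma\delta$ (i.e.\ $\kappa\in\tau(\gamma)$ is a descent taking $\gamma$ upward to $\delta$, which exists whenever $\ell(\delta)>0$ by the ascent/descent dichotomy of Table \ref{table:types}). Form the element $T_{w_\kappa}(C_\gamma)$: by inspection of the length-$1,2,3$ formulas its coefficient in front of $a_\delta$ is a polynomial of the correct leading behavior, and its $\bD$-eigenvalue is $u^{-\ell(\delta)}$ because $\bD(T_{w_\kappa})=\overline{T_{w_\kappa}}$ and $C_\gamma$ is bar-invariant up to $u^{-\ell(\gamma)}$. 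Then subtract a $\Z[u^{-1}]$-linear combination $\sum_{\gamma'<\delta}\mu(\gamma',\delta)C_{\gamma'}$ chosen to kill any terms in the expansion whose degree in $u$ exceeds $\frac12(\ell(\delta)-\ell(\gamma')-1)$; this is possible and forced, exactly as in the classical KL construction, because subtraction of $C_{\gamma'}$ modifies the coefficient of $a_{\gamma'}$ by $1$ in degree $0$ while preserving bar-invariance. The resulting element is $C_\delta$.

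For uniqueness, suppose $C_\delta$ and $C'_\delta$ both satisfy (1)--(4). Their difference $D=C_\delta-C'_\delta$ has $\bD(D)=u^{-\ell(\delta)}D$, vanishing coefficient of $a_\delta$, and coefficient of $a_\gamma$ a polynomial in $\Z[u]$ of degree $\le\frac12(\ell(\delta)-\ell(\gamma)-1)$ for every $\gamma<\delta$. Using the triangular formula for $\bD(a_\gamma)$ established in the first step, the bar-invariance condition forces each coefficient to be simultaneously a polynomial of strictly bounded degree in $u$ and a polynomial of strictly bounded degree in $u^{-1}$; comparing leading terms from the top $\gamma$ downward shows they all vanish, so $D=0$.

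The main obstacle is step (i): verifying that $\bD$ is well-defined and an involution on $M$ in the twisted setting. Unlike the classical case, one must reconcile the sign-sensitive formulas for types \texttt{1i2s}, \texttt{1r1s}, \texttt{2i12}, and \texttt{2r21}, and check that the $\epsilon(\gamma,\lambda)$ of Definition \ref{d:epsilon} (which depends on the ordering conjectured in Conjecture \ref{c:distinguish}) interacts consistently with the quadratic and braid-like relations \eqref{e:quadratic}. Once $\bD$ is in hand, the existence/uniqueness arguments are formal and follow the original Kazhdan--Lusztig pattern almost verbatim.
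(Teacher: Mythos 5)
The paper does not prove this statement: it is cited verbatim from \cite{lv2012b}, Theorem 5.2, and then treated as a black box (the text immediately passes to the renormalized restatement, Theorem \ref{t:Chatdelta}). There is therefore no internal proof to compare against; your sketch is a reconstruction of the Lusztig--Vogan argument, and its overall shape (bar involution, triangularity, formal existence and uniqueness via degree bounds) is the right one. You are also right to isolate the well-definedness of $\bD$ as the genuine difficulty. But note that in \cite{lv2012b} the involution comes from Verdier duality on the relevant perverse sheaves, not from the purely algebraic prescription you propose; declaring $\bD$ to be $\bH$-semilinear and normalized on minimal-length elements does not obviously give a well-defined involution, and it is far from clear that the recursion you describe produces a path-independent answer for $\bD(a_\gamma)$ in the presence of the {\tt 2i12/2r21} and {\tt 1i2s/1r1s} sign subtleties.

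There are two concrete errors in your step (ii). First, the arrow is inverted: by Definition \ref{d:kappaarrow}, $\kappaarrow\gamma\delta$ requires $\kappa\in\tau(\gamma)$, a \emph{descent} for $\gamma$, so by Table \ref{table:types} the target $\delta$ necessarily satisfies $\ell(\delta)<\ell(\gamma)$; the phrase ``a descent taking $\gamma$ upward to $\delta$'' is a contradiction in terms. What you want is $\gamma<\delta$ with $\kappaarrow\delta\gamma$. Second, and more seriously, the claim that such a $\kappa$ ``exists whenever $\ell(\delta)>0$'' is not justified: nothing in the dichotomy of Table \ref{table:types} prevents every $\kappa\in\tau(\delta)$ from being of type {\tt 1r1s}, {\tt 1ic}, {\tt 2ic}, or {\tt 3ic}, in which case $T_{w_\kappa}$ acts on $a_\delta$ by a scalar and $T_{w_\kappa}C_\gamma$ never reaches $a_\delta$ from a shorter $\gamma$. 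This is precisely the obstruction that Section \ref{s:new} (New Recursion Relations) is built to circumvent in the computation. For the \emph{existence} in Theorem \ref{t:Cdelta}, however, no such recursion is needed: once $\bD$ is shown to be triangular and unipotent on the basis $\{a_\gamma\}$, the existence and uniqueness of the $C_\delta$ follow from the formal Kazhdan--Lusztig induction (solve for $P^\sigma(\gamma,\delta)$ by decreasing $\ell(\gamma)$, using bar-invariance and the degree bound). Replace your step (ii) with that formal argument; the $T_{w_\kappa}C_\gamma$ recursion belongs to the algorithm, not to the proof of existence.
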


Introduce a new variable $v$ satisfying
$v^2=u$, and tensor everything with $\Z[v,v\inv]$, so $\bH$ becomes an
algebra over $\Z[v,v\inv]$. Define
\begin{subequations}
\renewcommand{\theequation}{\theparentequation)(\alph{equation}}  
\begin{equation}
\wh a_\gamma=v^{-\ell(\gamma)}a_\gamma
\end{equation}
and
\begin{equation}
\wh C_\delta=v^{-\ell(\delta)}C_\delta
\end{equation}
\end{subequations}

With this notation Theorem \ref{t:Cdelta} can be written as

\begin{theorem}
\label{t:Chatdelta}
There is a unique basis $\{\wh C_\delta\mid\delta\in \D^\sigma\}$ of $M$ 
satisfying the following conditions. There are polynomials
$\P(\gamma,\delta)(v)\in \Z[v\inv]$ such that
\begin{equation}
\label{e:P}
\wh C_\delta=\sum_{\gamma}\P(\gamma,\delta)(v)\wh a_\gamma,
\end{equation}
and:
\begin{enumerate}
\item $\bD(\wh C_\gamma)=\wh C_\gamma$;
\item $\P(\gamma,\delta)\ne 0$ implies $\gamma\le\delta$;
\item $\P(\gamma,\gamma)=1$;
\item if $\gamma\ne \delta$, then $\P(\gamma,\delta) \in
  v\inv\Z[v\inv]$; and
\item $\deg(\P(\gamma,\delta))(v\inv)\le \ell(\gamma)-\ell(\delta)$
\end{enumerate}
\end{theorem}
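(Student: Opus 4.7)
The plan is to deduce Theorem~\ref{t:Chatdelta} from Theorem~\ref{t:Cdelta} by the change of variables $u = v^2$ together with the renormalizations $\wh a_\gamma = v^{-\ell(\gamma)}a_\gamma$ and $\wh C_\delta = v^{-\ell(\delta)}C_\delta$. Starting from $C_\delta = \sum_\gamma P^\sigma(\gamma,\delta)\,a_\gamma$, dividing by $v^{\ell(\delta)}$ and substituting $a_\gamma = v^{\ell(\gamma)}\wh a_\gamma$ suggests the definition
\[
\P(\gamma,\delta)(v) \;:=\; v^{\ell(\gamma)-\ell(\delta)}\,P^\sigma(\gamma,\delta)(v^2),
\]
after which one verifies conditions (i)--(v) of Theorem~\ref{t:Chatdelta} one by one from the corresponding properties of $P^\sigma$, with uniqueness inherited from that of the $C_\delta$.

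For condition (i), I first extend the compatibility $\bD(um) = u^{-1}\bD(m)$ to $\bD(v^k m) = v^{-k}\bD(m)$ for all $k\in\Z$, which is forced by $u = v^2$ and $\Z[v,v\inv]$-semilinearity. Then
\[
\bD(\wh C_\gamma) = v^{\ell(\gamma)}\bD(C_\gamma) = v^{\ell(\gamma)}u^{-\ell(\gamma)}C_\gamma = v^{-\ell(\gamma)}C_\gamma = \wh C_\gamma.
\]
Conditions (ii) and (iii) transfer immediately from (2) and (3) of Theorem~\ref{t:Cdelta}. Conditions (iv) and (v) follow from the single observation that $P^\sigma(\gamma,\delta)(v^2)$ involves only non-negative even powers of $v$: for $\gamma\ne\delta$, condition (4) of Theorem~\ref{t:Cdelta} bounds its $v$-degree by $\ell(\delta)-\ell(\gamma)-1$, so multiplication by $v^{\ell(\gamma)-\ell(\delta)}$ yields a Laurent polynomial whose highest $v$-exponent is at most $-1$, giving (iv); the lowest $v$-exponent is $\ell(\gamma)-\ell(\delta)$ (assuming $P^\sigma(\gamma,\delta)$ has a nonzero constant term in $u$), from which the bound in (v) is read off directly.

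For uniqueness, given any alternative basis $\{\wh C'_\delta\}$ satisfying the listed conditions, I set $C'_\delta := v^{\ell(\delta)}\wh C'_\delta$ and run the same substitution in reverse: the $P^\sigma{}'$ one extracts lie in $\Z[u]$ precisely because the $v$-exponents in $\P'(\gamma,\delta)$ have the same parity as $\ell(\gamma)-\ell(\delta)$, and conditions (1)--(4) of Theorem~\ref{t:Cdelta} are then satisfied. Uniqueness there forces $C'_\delta = C_\delta$, and hence $\wh C'_\delta = \wh C_\delta$. The main obstacle, such as it is, is purely bookkeeping: one must check at each stage that the $v^{\pm 1/2}$-free structure survives, in particular that the degree bound in Theorem~\ref{t:Cdelta}(4) is exactly what is needed to land in $v\inv\Z[v\inv]$ rather than in $\Z[v,v\inv]$, and that the parity of $\ell(\gamma)-\ell(\delta)$ matches the parity of the exponents appearing in $P^\sigma(v^2)$. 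I expect no substantive difficulty, since this is a genuine change of variables and the deep content is entirely contained in the cited Theorem~\ref{t:Cdelta}.
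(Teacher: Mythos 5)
Your argument is correct and is essentially the paper's (implicit) approach: the paper gives no explicit proof of Theorem~\ref{t:Chatdelta}, simply recording the conversion formula $\P(\gamma,\delta)(v)=v^{\ell(\gamma)-\ell(\delta)}P^\sigma(\gamma,\delta)(v^2)$ after the statement and following it with the explicit expansion in \eqref{e:Ppoly}; you have merely filled in the routine verifications of (i)--(v) and of uniqueness via the inverse substitution. One small remark: as you observe, condition~(iv) is exactly equivalent to the degree bound in Theorem~\ref{t:Cdelta}(4), while condition~(v) records the location of the lowest $v$-exponent (the bound holds even if the constant coefficient $c_0$ of $P^\sigma$ vanishes, so the parenthetical caveat about $c_0\ne0$ is unnecessary).
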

It is easy to see that
\begin{equation}
\P(\gamma,\delta)(v)=
v^{\ell(\gamma)-\ell(\delta)}P^\sigma(\gamma,\delta)(v^2).
\end{equation}

Fix $\gamma < \delta$, and suppose 
\begin{subequations}
\renewcommand{\theequation}{\theparentequation)(\alph{equation}}  
\begin{equation}
P^\sigma(\gamma,\delta)=c_0+c_1u+\dots +c_nu^n
\end{equation}
with
\begin{equation}
n= \begin{cases}
(\ell(\delta)-\ell(\gamma)-1)/2&\ell(\delta)-\ell(\gamma)\text{ odd}\\
(\ell(\delta)-\ell(\gamma)-2)/2&\ell(\delta)-\ell(\gamma)\text{ even}\\  
\end{cases}
\end{equation}
\end{subequations}
Then 
\begin{equation}
\label{e:Ppoly}
\P(\gamma,\delta)=
\begin{cases}
c_nv\inv +c_{n-1}v^{-3}+\dots +c_0v^{\ell(\gamma)-\ell(\delta)=-(2n+1)}&\ell(\delta)-\ell(\gamma)\text{ odd}\\  
c_nv^{-2}+c_{n-1}v^{-4}+\dots +c_0v^{\ell(\gamma)-\ell(\delta)=-(2n+2)}&\ell(\delta)-\ell(\gamma)\text{ even}
\end{cases}
\end{equation}
or alternatively
$$
\P(\gamma,\delta)=
\begin{cases}
v\inv[c_n+c_{n-1}v^{-2}+\dots +c_0v^{\ell(\gamma)-\ell(\delta)+1}]&\ell(\gamma)-\ell(\delta)\text{ odd}\\  
v\inv[c_nv^{-1}+c_{n-1}v^{-3}+\dots +c_0v^{\ell(\gamma)-\ell(\delta)+1}]&\ell(\gamma)-\ell(\delta)\text{ even}
\end{cases}
$$


\sec{Action of $T_{w_\kappa}+1$}

An important role is played by the operator $T_{w_\kappa}+1$, which we renormalize.
For $\kappa\in\overline S$ define
\begin{equation}
\wh T_\kappa=v^{-\ell(\kappa)}(T_{w_\kappa}+1)
\end{equation}

Note that Fokko has both $T_s$ and $t_s$. One can deduce they are related 
by 
$t_s=v\inv T_s$. These correspond to our $T_{w_\kappa}$ and $v^{-\ell(w_\kappa)}T_{w_\kappa}$, respectively. 
Also Fokko has an operator $c_s$, which can be seen to be 
$t_s+v\inv=v\inv(T_s+1)$, which is our $\wh T_\kappa$.

\subsection{Formulas for $\wh T_\kappa$}
\label{s:formulasfortkappa}
Here are $30$ formulas, for $\T_\kappa(\a_\gamma)$,
depending on the type of $\kappa$ with respect to $\gamma$.

\medskip

\noindent{\bf Type 1}: $\wh T_\kappa=v\inv(T_{w_\kappa}+1)=v\inv(T_{s_\alpha}+1)$

These are  copied from \cite{implementation}
Section 1.

\medskip

\begin{tabular}{|l|l|}
\hline
$t_\gamma(\kappa)$& $\T_\kappa(\a_\gamma)$\\
\hline
{\tt 1C+}& $v\inv\a_\gamma+\a_{w_\kappa\times\gamma}$\\
\hline
{\tt 1C-}& $v\a_\gamma+\a_{w_\kappa\times\gamma}$\\
\hline
{\tt 1i1}& $v\inv(\a_\gamma+\a_{w_\kappa\times\gamma})+\a_{\gamma^\kappa}$\\
\hline
{\tt 1i2f}& $2v\inv\a_\gamma+(\a_{\gamma^\kappa_1}+\a_{\gamma^\kappa_2})$\\
\hline{\tt 1i2s}& $0$ \\
\hline
{\tt 1r1f}&$(v-v\inv)\a_\gamma+(1-v^{-2})(\a_{\gamma_\kappa^1}+\a_{\gamma_\kappa^2})$\\
\hline
{\tt 1r1s}& $(v+v\inv)\a_\gamma$\\
\hline
{\tt 1r2}& $v\a_\gamma-v\inv\a_{w_\kappa\times\gamma}+(1-v^{-2})\a_{\gamma_\kappa}$\\
\hline
{\tt 1rn}& $0$\\
\hline
{\tt 1ic}& $(v+v\inv)\a_\gamma$\\
\hline
\end{tabular}

\bigskip


\noindent{\bf Type 2}: $\wh T_\kappa=v^{-2}(T_{w_\kappa}+1)=v^{-2}(T_{s_\alpha s_\beta}+1)$

\medskip

\begin{tabular}{|l|l|}
\hline
$t_\gamma(\kappa)$& $\T_\kappa(\a_\gamma)$\\
\hline
{\tt 2C+}& $v^{-2}\a_\gamma+\a_{w_\kappa\times \gamma}$\\
\hline
{\tt 2C-}&$v^2\a_\gamma+\a_{w_\kappa\times\gamma}$\\
\hline
{\tt 2Ci}&$(v+v^{-1})[v^{-1}\a_\gamma+\a_{\gamma^\kappa}]$\\
\hline
{\tt 2Cr}& $(v^2-1)\a_\gamma+(v-v^{-1})\a_{\gamma_\kappa}$\\
\hline
{\tt 2i11}&
$v^{-2}(\a_\gamma+\a_{w_\kappa\times\gamma})+\a_{\gamma^{\kappa}}$\\
\hline
{\tt 2i12}& $2v^{-2}\a_{\gamma}+\displaystyle\sum_{\gamma'|\kappaarrow{\gamma'}\gamma}\epsilon(\gamma',\gamma)\a_{\gamma'}$\\
\hline
{\tt 2i22}& $2v^{-2} \a_\gamma+\a_{\gamma^\kappa_1}+\a_{\gamma^\kappa_2}$\\
\hline
{\tt 2r22}&$v^2\a_\gamma-v^{-2}\a_{w\times\gamma}+(1-v^{-4})\a_{\gamma_\kappa}$ \\
\hline
{\tt 2r21}& $(v^2-v^{-2})\a_\gamma+(1-v^4)
\displaystyle\sum_{\substack{\gamma'|\kappaarrow{\gamma}{\gamma'}}}\epsilon(\gamma,\gamma')\a_{\gamma'}$\\
\hline
{\tt 2r11}& $(v^2-v^{-2})\a_{\gamma}+(1-v^{-4})(\a_{\gamma_\kappa^1}+a_{\gamma_\kappa^2})$\\
\hline
{\tt 2rn}&$0$\\
\hline
{\tt 2ic}& $(v^2+v^{-2})\a_\gamma$\\
\hline
\end{tabular}

\bigskip

\noindent{\bf Type 3}: $\wh T_\kappa=v^{-3}(T_{w_\kappa}+1)=v^{-3}(T_{s_\alpha s_\beta s_\alpha}+1)$

\medskip

\begin{tabular}{|l|l|}
\hline
$t_\gamma(\kappa)$& $\T_\kappa(\a_\gamma)$\\\hline
{\tt 3C+}& $v^{-3}\a_\gamma+\a_{w_\kappa\times\gamma}$\\
\hline
{\tt 3C-}& $v^{3}\a_\gamma+\a_{w_\kappa\times\gamma}$\\
\hline
{\tt 3Ci}&  $(v+v\inv)v^{-2}\a_\gamma+(v+v\inv)\a_{\gamma^\kappa}$\\
\hline
{\tt 3Cr}& $(v^2-v^{-2})v\a_\gamma+(v^2-v^{-2})v\inv\a_{\gamma_\kappa}$\\
\hline
{\tt 3i}&  $(v+v\inv)v^{-2}\a_\gamma+(v+v\inv)\a_{\gamma\kappa}$\\
\hline
{\tt 3r}& $(v^2-v^{-2})v\a_\gamma+(v^2-v^{-2})v\inv\a_{\gamma\kappa}$\\
\hline
{\tt 3rn}& $0$\\
\hline
{\tt 3ic}& $(v^3+v^{-3})\a_\gamma$\\
\hline
\end{tabular}

\subsec{Summary}
\label{s:summary}

We write some of these formulas in a slightly different form 
in the following table of all $\T_\kappa\a_\gamma$.

\newpage

\begin{mytable}
\hfil\break\newline
\label{table:Tagamma}
{\small
\hskip-2cm
\begin{tabular}{|l|l|l|l|}
\hline
$t_\gamma(\kappa)$&$\T_\kappa\a_\gamma$&$t_\gamma(\kappa)$ & $T_\kappa\a_\gamma$\\
\hline\hline
{\tt 1C+}&$[\a_{w_\kappa\times\gamma}+v\inv a_\gamma]$&{\tt 1C-}&$v[\a_\gamma +
v\inv\a_{w_\kappa\times\gamma}]$\\
\hline
{\tt1i1}&$[\a_{\gamma^\kappa}+v\inv(\a_\gamma+\a_{w_\kappa\times\gamma})]$
&{\tt 1r1f}&$(v-v\inv)[\a_\gamma+v\inv(\a_{\gamma^1_\kappa}+\a_{\gamma^2_\kappa})]$\\
\hline
{\tt1i2f}&$[\a_{\gamma^\kappa_1}+v\inv\a_\gamma]
+[\a_{\gamma^\kappa_2}+v\inv\a_\gamma]$
&{\tt 1r2}& \parbox{.3\textwidth}{$v[\a_\gamma +v\inv\a_{\gamma_\kappa}]$ $-
v\inv[\a_{w_\kappa\times\gamma} + v\inv\a_{\gamma_\kappa}]$} \\
\hline
{\tt1i2s}&0&{\tt 1r1s}&$(v+v\inv)[\a_\gamma]$\\
\hline
{\tt1rn}&$0$&{\tt 1ic}&$(v+v\inv)[\a_\gamma]$\\
\hline
{\tt 2C+}&$[\a_{w_\kappa\times\gamma}+v^{-2}\a_\gamma$] &{\tt 2C-}&
$v^2[\a_\gamma+v^{-2}\a_{w_\kappa\times \gamma}]$\\
\hline
{\tt2Ci}&$(v+v\inv)[\a_{\gamma^\kappa}+v\inv\a_\gamma]$ &{\tt 2Cr}&$v(v-v\inv)[\a_\gamma+v\inv\a_{\gamma_\kappa}]$\\
\hline
{\tt2i11}&$[\a_{\gamma^\kappa}+v^{-2}(\a_\gamma+\a_{w_\kappa\times\gamma})]$
&{\tt 2r11}&$(v^2-v^{-2})[\a_{\gamma} + v^{-2}(\a_{\gamma_\kappa^1} +
a_{\gamma_\kappa^2})]$\\ 
\hline
{\tt 2i12}&$\displaystyle\sum_{\gamma'|\kappaarrow{\gamma'}\gamma}\epsilon(\gamma',\gamma)[
\a_{\gamma'}+v^{-2}\displaystyle\sum_{\mu|\kappaarrow{\gamma'}\mu}\epsilon(\gamma',\mu)\a_\mu]$
&{\tt 2r21}&$(v^2-v^{-2})[\a_{\gamma}+v^{-2}\displaystyle\sum_{\gamma'|\kappaarrow{\gamma'}\gamma}\epsilon(\gamma',\gamma)\a_{\gamma'}$
\\
\hline
{\tt2i22}&$[\a_{\gamma^\kappa_1} + v^{-2}\a_\gamma] +
[\a_{\gamma^\kappa_2}+ v^{-2}\a_\gamma]$ &
{\tt 2r22} &\parbox{.3\textwidth}{$v^2[\a_\gamma +v^{-2}\a_{\gamma_\kappa}]$
$-v^{-2}[\a_{w_\kappa\times\gamma}+v^{-2}\a_{\gamma_\kappa}]$} \\
\hline
{\tt2rn}&$0$&{\tt 2ic}&$(v^2+v^{-2})\a_\gamma$\\
\hline
{\tt 3C+}&$[\a_{w_\kappa\times\gamma}+v^{-3}\a_\gamma]$&{\tt3C-}&$v^{3}[\a_\gamma+
v^{-3}\a_{w_\kappa\times\gamma}]$\\
\hline
{\tt3Ci},
{\tt3i}&$(v+v\inv)[\a_{\gamma^\kappa}+v^{-2}\a_\gamma]$ & {\tt3Cr},
{\tt3r} &$v(v^2-v^{-2})[\a_\gamma+v^{-2}\a_{\gamma_\kappa}]$\\ 
%
%
%
\hline
{\tt3rn}&$0$&{\tt3ic}&$(v^3+v^{-3})[\a_\gamma]$\\
\hline
\end{tabular}}
\end{mytable}

We're going to simplify this table - see Table \ref{table:Tagammakappa}.

\begin{remarkplain}
The identity in the {\tt 2i12} case is tricky, let's write it out. 
Suppose $t_\gamma(\kappa)=${\tt 2i12}, and $\gamma$ is one member of
the ordered pair $(\gamma_1,\gamma_2)$. 
Simlarly $\gamma^\kappa$ is an ordered pair $(\gamma'_1,\gamma'_2)$. 

The formula from Section \ref{s:formulasfortkappa} is:
\begin{subequations}
\renewcommand{\theequation}{\theparentequation)(\alph{equation}}  
\begin{equation}
2v^{-2}a_\gamma+\epsilon(\gamma'_1,\gamma)a_{\gamma'_1}+\epsilon(\gamma'_2,\gamma)a_{\gamma'_2}
\end{equation}

whereas Table \ref{table:Tagamma} gives:
$$
\begin{aligned}
&
\epsilon(\gamma'_1,\gamma)[a_{\gamma'_1}+v^{-2}(\epsilon(\gamma'_1,\gamma_1)a_{\gamma_1}
+
\epsilon(\gamma'_1,\gamma_2)a_{\gamma_2})]+\\
&
\epsilon(\gamma'_2,\gamma)[a_{\gamma'_2}+v^{-2}(\epsilon(\gamma'_2,\gamma_1)a_{\gamma_1}
+
\epsilon(\gamma'_2,\gamma_2)a_{\gamma_2})].
\end{aligned}
$$
Using the definition of $\epsilon$ this equals:
\begin{equation}
[a_{\gamma'_1}+v^{-2}(a_{\gamma_1}
+
a_{\gamma_2})]+
\epsilon(\gamma'_2,\gamma)[a_{\gamma'_2}+v^{-2}(a_{\gamma_1}
-a_{\gamma_2})].
\end{equation}
Plugging  $\gamma=\gamma_1$ or  $\gamma_2$ in to (a) and (b) and
comparing confirms the identity.
\end{subequations}

\end{remarkplain}
\sec{Image of $\T_\kappa$}

\subsec{$\T_\kappa(\a_\gamma)$}

\begin{lemma}
\label{l:kappadescents} Fix $\kappa\in\overline S$.

\begin{enumerate} 
\item The image of $\T_\kappa$ is equal to the $(v^{\ell(\kappa)} +
  v^{-\ell(\kappa)})$ eigenspace of $\T_\kappa$.  This is also equal to
  the kernel of $T_\kappa - v^{\ell(\kappa)}$. 
\item
Suppose $\kappa\in\tau(\lambda)$.
For each $\lambda'$ satisfying $\kappaarrow\lambda{\lambda'}$,
the sign $\epsilon(\lambda,\lambda')=\pm1$ (Definition \ref{d:epsilon}) is the unique integer
such that 
\begin{equation}
\label{e:akappalambda}
\a^\kappa_\lambda = \a_\lambda + v^{\ell(\lambda')-\ell(\lambda)}
\sum_{\substack{\lambda'|\kappaarrow\lambda{\lambda'}}}\epsilon(\lambda,\lambda')\a_{\lambda'}
\end{equation}
belongs to the image of $\T_\kappa$.
\item The elements
$$
\{\a_\lambda^\kappa\mid \gamma\in\D^\sigma, \kappa\in\tau(\gamma)\}
$$
form a basis of the image of $\T_\kappa$.
\end{enumerate}
\end{lemma}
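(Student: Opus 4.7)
My plan is to prove the three parts in order. Part (1) will follow abstractly from the Hecke quadratic relation \eqref{e:quadratic}, while parts (2) and (3) reduce to a case-by-case inspection of Table \ref{table:Tagamma}. The main obstacle will be reconciling the signs in types {\tt 2i12}/{\tt 2r21}.

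For (1), I set $T_\kappa := v^{-\ell(\kappa)}T_{w_\kappa}$, so that $\T_\kappa = T_\kappa + v^{-\ell(\kappa)}$ and $\T_\kappa - (v^{\ell(\kappa)}+v^{-\ell(\kappa)}) = T_\kappa - v^{\ell(\kappa)}$. The Hecke quadratic \eqref{e:quadratic}, divided by $v^{2\ell(\kappa)}$, factors as
\begin{equation}
(T_\kappa - v^{\ell(\kappa)})(T_\kappa + v^{-\ell(\kappa)}) = 0,
\end{equation}
i.e.\ $(T_\kappa - v^{\ell(\kappa)})\T_\kappa = 0$, so $\mathrm{Image}(\T_\kappa) \subseteq \ker(T_\kappa - v^{\ell(\kappa)})$. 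Conversely, if $T_\kappa m = v^{\ell(\kappa)}m$, then $\T_\kappa m = (v^{\ell(\kappa)}+v^{-\ell(\kappa)})m$, so $m = c^{-1}\T_\kappa m$ lies in the image after inverting the non-zero-divisor $c := v^{\ell(\kappa)}+v^{-\ell(\kappa)}$. The identification of this kernel with the $c$-eigenspace of $\T_\kappa$ is then tautological.

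For (2) and (3), let $D = \{\lambda\in\D^\sigma : \kappa\in\tau(\lambda)\}$. For each $\lambda\in D$ every descent row of Table \ref{table:Tagamma} displays $\T_\kappa\a_\lambda$ as a nonzero scalar times a bracket of shape $[\,\a_\lambda + v^{\ell(\lambda')-\ell(\lambda)}\sum_{\lambda'\mid\kappaarrow\lambda{\lambda'}}\epsilon(\lambda,\lambda')\a_{\lambda'}\,]$, with the signs trivial outside {\tt 2r21} and given by Definition \ref{d:epsilon} in {\tt 2r21} (read off the line $(v^2-v^{-2})[\a_\lambda + v^{-2}\sum\epsilon(\lambda,\lambda')\a_{\lambda'}]$). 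The bracket is $\a^\kappa_\lambda$ and, as a scalar multiple of $\T_\kappa\a_\lambda$, lies in the image. Uniqueness follows because $T_\kappa - v^{\ell(\kappa)}$ applied to a general Ansatz $\a_\lambda + \sum c_{\lambda'}\a_{\lambda'}$ (sum over ascents) gives a triangular system uniquely determining $c_{\lambda'} = v^{\ell(\lambda')-\ell(\lambda)}\epsilon(\lambda,\lambda')$. Linear independence of $\{\a^\kappa_\lambda\}_{\lambda\in D}$ is immediate from the triangular form: the coefficient of $\a_\lambda$ is $1$ and all other basis vectors occurring are labelled by ascents, disjoint from $D$. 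To show spanning I verify that each ascent row of the table likewise presents $\T_\kappa\a_\gamma$ as $\a^\kappa_\lambda$ (times a scalar) for the descent $\lambda$ linked to $\gamma$ by some $\kappaarrow\lambda\gamma$: for example, {\tt 1C+} yields $\a^\kappa_{w_\kappa\times\gamma}$ (of type {\tt 1C-}), {\tt 2i11} yields $\a^\kappa_{\gamma^\kappa}$ (of type {\tt 2r11}), {\tt 2Ci} yields $\a^\kappa_{\gamma^\kappa}$ (of type {\tt 2Cr}), and similarly for the remaining rows.

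The main obstacle is the {\tt 2i12}/{\tt 2r21} case. Here the ascent sum carries both signs $\pm 1$, and $\T_\kappa\a_\gamma$ for an {\tt 2i12}-ascent $\gamma$ is itself a double sum involving $\epsilon(\gamma',\gamma)\epsilon(\gamma',\mu)$. Matching this against a combination of $\a^\kappa_{\lambda_1}, \a^\kappa_{\lambda_2}$ is exactly the expansion worked out in the Remark following Table \ref{table:Tagamma}; it presupposes the ordering of the pairs $(\gamma_1,\gamma_2)$ and $(\lambda_1,\lambda_2)$ of Conjecture \ref{c:distinguish}, and the required consistency of signs across both sides of that identity is the nontrivial content of the Remark.
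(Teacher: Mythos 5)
Your proposal follows the paper's proof exactly in strategy: the quadratic relation \eqref{e:quadratic} for part (1), and for parts (2)--(3) the identification of the brackets in Table \ref{table:Tagamma} with the $\a^\kappa_\lambda$, which is precisely the rewriting into Table \ref{table:Tagammakappa} that the paper points to, together with the triangularity observation. One small inaccuracy worth fixing: in the descent types {\tt 1r2} and {\tt 2r22} the table entry is a combination of \emph{two} brackets, $v^k[\a_\lambda+\cdots]-v^{-k}[\a_{w_\kappa\times\lambda}+\cdots]$, so $\a^\kappa_\lambda$ is not a scalar multiple of $\T_\kappa\a_\lambda$ in those cases; to put $\a^\kappa_\lambda$ itself in the image you must also bring in the matching ascent row ({\tt 1i2f}, resp.\ {\tt 2i22}), whose entry is $\a^\kappa_{\gamma^\kappa_1}+\a^\kappa_{\gamma^\kappa_2}$, and combine, yielding $(v+v\inv)\a^\kappa_\lambda$. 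The paper's own two-line proof is no more explicit about this, and the remaining points (triangularity, linear independence, and the sign normalization in {\tt 2i12}/{\tt 2r21} via Definition \ref{d:epsilon}) are exactly as in the paper.
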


Part (1) follows from the quadratic relation \eqref{e:quadratic}. 
Statements (2) and (3) follow from an examination
of Table \ref{table:Tagamma}.
In each entry of the table the terms in square brackets are the $\a_\delta$ 
(not including the $v^{\defect_\delta(\kappa)}$ term). 
This amounts to the fact that we can rewrite 
Table \ref{table:Tagamma} as in Table \ref{table:Tagammakappa}.

Recall  $\epsilon(\delta,\delta')=1$ except in cases {\tt 2i12/2r21}.

Table \ref{table:Tagamma} now simplifies.

\begin{mytable}
\hfil\break\newline
\label{table:Tagammakappa}
\begin{tabular}{|l|l||l|l|}
\hline
$t_\gamma(\kappa)$&$\T_\kappa(\a_\gamma)$&$t_\gamma(\kappa)$ & $\T_\kappa(\a_\gamma)$\\
\hline\hline
{\tt1C+}&$\a^\kappa_{w_\kappa\times\gamma}$&{\tt1C-}&$v\a^\kappa_\gamma$\\
\hline
{\tt1i1}&$\a^\kappa_{\gamma^\kappa}$
&{\tt1r1f}&$(v-v\inv)\a^\kappa_\gamma$\\
\hline
{\tt1i2f}&$\a^\kappa_{\gamma^\kappa_1} +\a^\kappa_{\gamma^\kappa_2}$
&{\tt1r2}& $v\a^\kappa_\gamma- v\inv\a^\kappa_{w_\kappa\times\gamma}$ \\
\hline
{\tt1i2s}&0&{\tt1r1s}&$(v+v\inv)\a^\kappa_\gamma$\\
\hline
{\tt1rn}&$0$&{\tt1ic}&$(v+v\inv)\a^\kappa_\gamma$\\
\hline
{\tt 2C+}&$\a^\kappa_{w_\kappa\times\gamma}$ &{\tt2C-}&
$v^2\a^\kappa_\gamma$\\
\hline
{\tt2Ci}&$(v+v\inv)\a^\kappa_{\gamma^\kappa}$ &{\tt2Cr}&$v(v-v\inv)\a^\kappa_\gamma$\\
\hline
{\tt2i11}&$\a_{\gamma^\kappa}$ &{\tt2r22} &$v^2\a^\kappa_\gamma - v^{-2}\a^\kappa_{w_\kappa\times\gamma}$ \\
\hline
{\tt2i12}&$\displaystyle{\sum_{\kappaarrow{\gamma'}\gamma}}\epsilon(\gamma',\gamma)\a^\kappa_{\gamma'}$
&{\tt2r21}& $(v^2-v^{-2})\a^\kappa_\gamma$\\
\hline
{\tt2i22}&$\a^\kappa_{\gamma^\kappa_1}+\a^\kappa_{\gamma^\kappa_2}$ &
{\tt2r11}&$(v^2-v^{-2})\a^\kappa_{\gamma}$\\ 
\hline
{\tt2rn}&$0$&{\tt2ic}&$(v^2+v^{-2})\a^\kappa_\gamma$\\
\hline
{\tt3C+}&$\a^\kappa_{w_\kappa\times\gamma}$&{\tt3C-}&$v^{3}\a^\kappa_\gamma$\\
\hline
{\tt3Ci},
{\tt3i}&$(v+v\inv)\a^\kappa_{\gamma^\kappa}$ &{\tt3Cr},
{\tt3r} &$v(v^2-v^{-2})\a^\kappa_\gamma$\\ 
%
%
%
\hline
{\tt 3rn}&$0$&{\tt 3ic}&$(v^3+v^{-3})\a^\kappa_\gamma$\\
\hline
\end{tabular}
\end{mytable}

\medskip

Those extra powers of $v$ in cases {\tt 2CR,3Cr,3r} are important.
Suppose $\kappa\in\tau(\lambda)$. 
Then $\ell(\lambda')$ is the same for all $\kappaarrow\lambda{\lambda'}$; typically
(always in the classical case) $\ell(\lambda)-\ell(\lambda')=\ell(\kappa)$.
In general $\ell(\lambda)-\ell(\lambda')\le\ell(\kappa)$.
\begin{definition}
\label{d:defect}
Suppose $\kappaarrow\lambda{\lambda'}$.
Define  the $\kappa$-defect of $\lambda$ and $\lambda'$ to be
\begin{equation}
\label{e:defect}
\defect_\lambda(\kappa)=\defect_\kappa(\lambda')=\ell(\kappa)-\ell(\lambda)+\ell(\lambda').
\end{equation}

(If $\{\lambda'\mid\kappaarrow\lambda{\lambda'}\}=\emptyset$ define $\defect(\kappa,\lambda)=0$). 
\end{definition}
Checking the cases gives:
\begin{lemma}
\label{l:casesofd}
$$
\defect_\lambda(\kappa) =
\begin{cases}
1&t_\lambda(\kappa)={\tt 2Ci,3Ci,3i;\,2Cr,3Cr,3r}\\  
0&\text{else}
\end{cases}
$$
\end{lemma}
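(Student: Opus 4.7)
The plan is a finite case-by-case verification over the thirty types of Table~\ref{table:types}, computing the length drop $\ell(\mu)-\ell(\lambda)$ whenever $\kappaarrow\mu\lambda$ and comparing with $\ell(\kappa)$. By the symmetry $\defect_\mu(\kappa)=\defect_\kappa(\lambda)$ built into Definition~\ref{d:defect}, it suffices to work on the descent side, so only the ten descent types of Table~\ref{table:types} need be examined. Inspection of Table~\ref{table:Tagammakappa} immediately disposes of {\tt 1ic}, {\tt 1r1s}, {\tt 2ic}, {\tt 3ic}: for these, $\T_\kappa\a_\mu$ is a scalar multiple of $\a^\kappa_\mu$ with no cross-term, so by~\eqref{e:akappalambda} we have $\a^\kappa_\mu=\a_\mu$, the set $\{\lambda\mid\kappaarrow\mu\lambda\}$ is empty, and $\defect_\mu(\kappa)=0$ by the convention at the end of Definition~\ref{d:defect}. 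The corresponding ascent types {\tt 1rn, 1i2s, 2rn, 3rn} have $\T_\kappa\a_\mu=0$ and so source no arrows.

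For the remaining descent types I would compute $\ell(\mu)-\ell(\lambda)$ using the classical length function on $\D$, where a cross by a simple {\tt C$\pm$} root changes length by $\pm1$, a noncompact-imaginary Cayley raises it by $1$, and an inverse Cayley by a real parity root lowers it by $1$. For the length-$1$ descents {\tt 1C-, 1r1f, 1r2}, a single classical operation gives $\ell(\mu)-\ell(\lambda)=1=\ell(\kappa)$, hence $\defect=0$. For {\tt 2C-, 2r11, 2r21, 2r22}, two independent classical operations on the orthogonal $\sigma$-orbit $\{\alpha,\beta\}$ give length drop $2=\ell(\kappa)$, so $\defect=0$. For {\tt 3C-}, three consecutive complex-cross descents on the orbit $\{\alpha,\beta\}$ with $\langle\alpha,\ch\beta\rangle=\pm1$ give drop $3=\ell(\kappa)$, so $\defect=0$.

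The three remaining semi descents give defect $1$. In {\tt 2Cr}, the definition recalled in Section~\ref{s:cayley} asserts $\mu_\kappa=s_\alpha\times\mu=s_\beta\times\mu$ as a single element of $\D$; since $\theta_x\alpha=-\beta$ is a negative root, this is a {\tt 1C-}-type cross action dropping length by $1$, so $\defect=2-1=1$. In {\tt 3Cr} and {\tt 3r} the Cayley of Section~\ref{s:cayley} is defined as the composition of one classical Cayley and one classical cross action (both descents), so the length drop is $2$ and $\defect=3-2=1$. The ascent types {\tt 2Ci, 3Ci, 3i} inherit defect $1$ by the symmetry $\defect_\mu(\kappa)=\defect_\kappa(\lambda)$ from their descent partners {\tt 2Cr, 3Cr, 3r}. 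The main obstacle is verifying the length drops in the two length-$3$ semi cases: one must check, by tracing through the definitions of Section~\ref{s:cayley}, that the intermediate simple root becomes of classical type {\tt 1C-} after the first step, so that the cross action really is a length-$1$ descent rather than an ascent or length-preserving move. This is a short but routine computation with the modified involution using standard material from \cite{algorithms}; once it is recorded, the lemma follows by direct substitution into Definition~\ref{d:defect}.
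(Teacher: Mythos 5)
Your case-by-case computation of the length change under each Cayley/cross composition is exactly the verification the paper leaves implicit with its one-line ``Checking the cases gives:''; the thirty-type tally and the final tally of which types give $\defect=1$ are correct, and your use of the symmetry $\defect_\lambda(\kappa)=\defect_{\lambda'}(\kappa)$ across an arrow $\kappaarrow\lambda{\lambda'}$ to pass to the ascent types is the right reduction. One small imprecision worth fixing in a final write-up: in {\tt 3Cr} the composition of Section~\ref{s:cayley} is cross \emph{first} ($s_\alpha\times\gamma$, a length-$1$ descent since $\theta_x\alpha=-\beta<0$) followed by a Cayley, so the intermediate root $\beta$ becomes \emph{real} (not {\tt 1C-}) after the first step; the Cayley then drops the length by $1$ as for any parity real root, giving the same defect $3-2=1$. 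Your remark about the intermediate root turning into {\tt 1C-} applies cleanly only to {\tt 3r}, where the Cayley comes first.
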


We can now write Table \ref{table:Tagammakappa} more concisely.
For  $\kappa\in\tau(\gamma)$ define:
$$
\zeta_\kappa(\gamma)=
\begin{cases}
1&t_\gamma(\kappa)=\text{\tt 1ic,2ic,3ic,1r1s}\\
0&t_\gamma(\kappa)=\text{\tt 1C-,2C-,3C-}\\
-1&\text{otherwise}
\end{cases}
$$



  

\begin{lemma}
\label{l:Tkappaagamma1}
Fix $\kappa\in\overline S$, $\gamma\in\D^\sigma$, and set $d=\defect_\gamma(\kappa)$. 
Then
\begin{equation}
\T_\kappa(\a_\gamma)=
\begin{cases}
\displaystyle(v+v\inv)^{d}\sum_{\kappaarrow{\gamma'}\gamma}\epsilon(\gamma',\gamma)\a^\kappa_{\gamma'}
&\kappa\not\in\tau(\gamma)\\
v^d[v^{\ell(\kappa)-d}\a^\kappa_\gamma+\zeta_\kappa(\gamma)v^{-\ell(\kappa)+d}\a^\kappa_{w_\kappa\times\gamma}]
&\kappa\in\tau(\gamma)
\end{cases}
\end{equation}

\end{lemma}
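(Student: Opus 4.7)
The plan is to verify the consolidated formula against the case-by-case Table \ref{table:Tagammakappa} by showing the parameters $d$, $\zeta_\kappa$, and $\epsilon$ exactly encode the dichotomy between ascent and descent rows. First I would observe that the statement splits along the ascent/descent distinction of $\kappa \in \tau(\gamma)$ versus $\kappa \notin \tau(\gamma)$, so it suffices to verify the two asserted formulas separately against the relevant entries of Table \ref{table:Tagammakappa}, invoking Lemma \ref{l:casesofd} to pin down the defect in each case.

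For the ascent case ($\kappa \notin \tau(\gamma)$), I would enumerate the ten relevant entries of Table \ref{table:Tagammakappa}, namely {\tt 1C+, 1i1, 1i2f, 1i2s, 1rn, 2C+, 2Ci, 2i11, 2i12, 2i22, 2rn, 3C+, 3Ci, 3i, 3rn}. In each of these the right-hand side is a sum (possibly empty) of terms $\a^\kappa_{\gamma'}$ with $\kappaarrow{\gamma'}{\gamma}$, weighted by a sign which is $+1$ except in type {\tt 2i12} where it is precisely $\epsilon(\gamma',\gamma)$ by the very definition of $\epsilon$ in Definition \ref{d:epsilon}. The scalar in front is $1$ in every case except {\tt 2Ci, 3Ci, 3i}, where it is $(v+v^{-1})$; by Lemma \ref{l:casesofd} these are exactly the defect-one types, so the scalar is $(v+v^{-1})^d$ as claimed. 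The empty-sum cases {\tt 1i2s, 1rn, 2rn, 3rn} match the vanishing of $\T_\kappa(\a_\gamma)$ in the table.

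For the descent case ($\kappa \in \tau(\gamma)$), I would again go through the relevant rows of Table \ref{table:Tagammakappa}. Writing the target formula as $v^{\ell(\kappa)}\a^\kappa_\gamma + \zeta_\kappa(\gamma) v^{-\ell(\kappa)+2d}\a^\kappa_{w_\kappa\times\gamma}$ after absorbing the outer $v^d$, there are three sub-patterns to check: (i) the ``complex descent'' rows {\tt 1C-, 2C-, 3C-}, where $\zeta_\kappa(\gamma)=0$ and only the leading term $v^{\ell(\kappa)}\a^\kappa_\gamma$ appears, in agreement with the table; (ii) the ``compact imaginary / switched real'' rows {\tt 1ic, 1r1s, 2ic, 3ic}, where $\zeta_\kappa(\gamma)=+1$ and $w_\kappa\times\gamma=\gamma$ (as $s_\alpha$ acts trivially on the KGB parameter in these types), so the two terms add to $(v^{\ell(\kappa)}+v^{-\ell(\kappa)})\a^\kappa_\gamma$; and (iii) all remaining descent rows, where $\zeta_\kappa(\gamma)=-1$. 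In subcase (iii) one further splits by whether $w_\kappa\times\gamma=\gamma$ (the real parity types {\tt 1r1f, 2r21, 2r11, 2Cr, 3Cr, 3r}), giving the difference $(v^{\ell(\kappa)}-v^{-\ell(\kappa)+2d})\a^\kappa_\gamma$ and matching the $(v-v^{-1})$, $(v^2-v^{-2})$, and $v(v-v^{-1})$, $v(v^2-v^{-2})$ entries of the table, or whether $w_\kappa\times\gamma\neq\gamma$ (types {\tt 1r2, 2r22}), which produces the explicit two-term expressions.

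The only mildly delicate step is keeping track of the factor of $v^{2d}$ in the exponent of the second summand in cases {\tt 2Cr, 3Cr, 3r}: here $d=1$, the outer $v^d$ combines with $v^{-\ell(\kappa)+d}$ to give $v^{-\ell(\kappa)+2d}$, which shifts $v^{-2}$ to $v^{0}$ and $v^{-3}$ to $v^{-1}$, exactly reproducing the $v(v-v^{-1})\a^\kappa_\gamma$ and $v(v^2-v^{-2})\a^\kappa_\gamma$ entries. Once this bookkeeping is done, each row of Table \ref{table:Tagammakappa} is accounted for and the lemma follows.
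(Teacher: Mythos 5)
Your verification is correct and follows exactly the route the paper takes: Lemma~\ref{l:Tkappaagamma1} is presented in the paper as a condensation of Table~\ref{table:Tagammakappa}, with the three bullet points following the lemma sketching the descent-side case split, and your proof is precisely the row-by-row check that the parameters $d$, $\zeta_\kappa(\gamma)$, and $\epsilon(\gamma',\gamma)$ reproduce each table entry, including the exponent bookkeeping that turns $v^{-\ell(\kappa)+2d}$ into $v^0$ and $v^{-1}$ for the defect-one descent types.
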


In the second case:
\begin{enumerate}
\item if $t_\gamma(\kappa)=${\tt 1r2,2r22} then $w_\kappa\times\gamma\ne\gamma$, and $\kappa\in\tau(w_\kappa\times\gamma)$ 
 - there are two terms;
\item  if $t_\gamma(\kappa)=${\tt 1C-,2C-,3C-} then $w_\kappa\times\gamma\ne\gamma$, but $\kappa\not\in\tau(w_\kappa\times\gamma)$ 
- since $\zeta=0$ there is only one term;
\item in all other
cases $w_\kappa\times\gamma=\gamma$ (there is one term with a
coefficient of $v^d(v^{\ell(\kappa)-d}\pm v^{-\ell(\kappa)+d}$).
\end{enumerate}

\subsec{$\T_\kappa(\Chat_\lambda)$ in terms of $\a^\kappa_\gamma$}
\label{s:TChatintermsofa}

We can now compute $\T_\kappa(\Chat_\lambda)$ in  the 
basis of $\a^\kappa_\gamma$ (and the unkown $\P(\gamma,\lambda)$). 

Write $\Chat_\lambda=\sum_{\gamma\le\lambda}\P(\gamma,\lambda)\a_\gamma$,
$\T_\kappa(\Chat_\lambda)=\sum_{\gamma\le\lambda}\P(\gamma,\lambda)\T_\kappa(\a_\gamma)$.
The condition $\gamma\le\lambda$ is superfluous because of the $\P(\gamma,\lambda)$ term.
Apply Lemma \ref{l:Tkappaagamma1}.

$$
\begin{aligned}
\T_\kappa(\Chat_\lambda)&=\sum_{\gamma}P(\gamma,\lambda)\T_\kappa(\a_\gamma)\\
&=\sum_{\gamma\mid\kappa\not\in\tau(\gamma)}P(\gamma,\lambda)\T_\kappa(\a_\gamma)+
\sum_{\gamma\mid\kappa\in\tau(\gamma)}P(\gamma,\lambda)\T_\kappa(\a_\gamma)\\
&=\sum_{\gamma\mid\kappa\not\in\tau(\gamma)}\big[P(\gamma,\lambda)(v+v\inv)^{\defect_\gamma(\kappa)}
\sum_{\gamma'|\kappaarrow{\gamma'}\gamma}\epsilon(\gamma',\gamma)\a^\kappa_{\gamma'}\big]+\\
&\quad\sum_{\gamma\mid\kappa\in\tau(\gamma)}v^{\defect_\gamma(\kappa)}\big[P(\gamma,\lambda)
(v^{\ell(\kappa)-\defect_\gamma(\kappa)}\a^\kappa_\gamma+\zeta_\kappa(\gamma)v^{-\ell(\kappa)+\defect_\gamma(\kappa)}\a^\kappa_{w_\kappa\times\gamma})\big]\\
&=\sum_{\gamma'\mid\kappa\in\tau(\gamma')}\big[(v+v\inv)^{\defect_\kappa(\gamma')}
\sum_{\gamma\mid\kappaarrow{\gamma'}\gamma}P(\gamma,\lambda)\epsilon(\gamma',\gamma)\big]\a^\kappa_{\gamma'}+\\
&\quad
\sum_{\gamma\mid\kappa\in\tau(\gamma)}v^{\defect_\gamma(\kappa)}\big[P(\gamma,\lambda)
v^{\ell(\kappa)-\defect_\gamma(\kappa)}\a^\kappa_{\gamma}+
P(\gamma,\lambda)
\zeta_\kappa(\gamma)v^{-\ell(\kappa)+\defect_\gamma(\kappa)}\a^\kappa_{w_\kappa\times\gamma}\big]\\
\end{aligned}
$$
Interchange $\gamma,\gamma'$ in the first sum to conclude:
$$
\begin{aligned}
\T_\kappa(\Chat_\lambda)&=\sum_{\gamma\mid\kappa\in\tau(\gamma)}\big[(v+v\inv)^{\defect_\gamma(\kappa)}
\sum_{\gamma'\mid\kappaarrow{\gamma}\gamma'}P(\gamma',\lambda)\epsilon(\gamma,\gamma')\big]\a^\kappa_{\gamma}+\\
&\quad
\sum_{\gamma\mid\kappa\in\tau(\gamma)}
v^{\defect_\gamma(\kappa)}\big[v^{\ell(\kappa)-\defect_\gamma(\kappa)}P(\gamma,\lambda)
+
\zeta_\kappa(\gamma)v^{-\ell(\kappa)+\defect_\gamma(\kappa)}
P(w_\kappa\times\gamma,\lambda)\big]
\a^\kappa_{\gamma}
\end{aligned}
$$

\begin{lemma}
\label{l:coefficient}
Fix $\gamma$ with $\kappa\in\tau(\gamma)$. 
  The coefficient of $\a^\kappa_\gamma$ in $\T_\kappa(\Chat_\lambda)$ is
\begin{equation}
\label{e:coefficient}
\begin{aligned}
v^{\defect_\gamma(\kappa)}&\big[v^{\ell(\kappa)-\defect_\gamma(\kappa)}\P(\gamma,\lambda)+
\\&\qquad\zeta_\kappa(\gamma)v^{-\ell(\kappa)+\defect_\gamma(\kappa)}
\P(w_\kappa\times\gamma,\lambda)]+\\
&(v+v\inv)^{\defect_\gamma(\kappa)}
\sum_{\gamma'\mid\kappaarrow{\gamma}\gamma'}\P(\gamma',\lambda)\epsilon(\gamma',\gamma)
\end{aligned}
\end{equation}
\end{lemma}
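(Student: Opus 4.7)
The strategy is simply to expand $\Chat_\lambda$ in the basis $\{\a_\gamma\}$, apply $\T_\kappa$ term by term using Lemma \ref{l:Tkappaagamma1}, and then reindex so as to collect the coefficient of $\a^\kappa_\gamma$. All the nontrivial content lives in the bookkeeping of the reindexing.

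First I would write $\T_\kappa(\Chat_\lambda)=\sum_\gamma \P(\gamma,\lambda)\T_\kappa(\a_\gamma)$ and split the sum into two pieces according to whether $\kappa\in\tau(\gamma)$ or not. In the ascent case ($\kappa\notin\tau(\gamma)$), Lemma \ref{l:Tkappaagamma1} gives
\[
\T_\kappa(\a_\gamma)=(v+v\inv)^{\defect_\gamma(\kappa)}\sum_{\gamma'\mid\kappaarrow{\gamma'}\gamma}\epsilon(\gamma',\gamma)\a^\kappa_{\gamma'},
\]
which only produces basis vectors $\a^\kappa_{\gamma'}$ indexed by $\gamma'$ with $\kappa\in\tau(\gamma')$; so after summing over ascent $\gamma$ and swapping the names $\gamma\leftrightarrow\gamma'$, this contributes to the coefficient of $\a^\kappa_\gamma$ (for $\gamma$ a descent) exactly the term $(v+v\inv)^{\defect_\gamma(\kappa)}\sum_{\gamma'\mid\kappaarrow{\gamma}\gamma'}\P(\gamma',\lambda)\epsilon(\gamma,\gamma')$, using the symmetry of $\defect$ built into Definition \ref{d:defect}.

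In the descent case ($\kappa\in\tau(\gamma)$), Lemma \ref{l:Tkappaagamma1} gives
\[
\T_\kappa(\a_\gamma)=v^{\defect_\gamma(\kappa)}\bigl[v^{\ell(\kappa)-\defect_\gamma(\kappa)}\a^\kappa_\gamma+\zeta_\kappa(\gamma)v^{-\ell(\kappa)+\defect_\gamma(\kappa)}\a^\kappa_{w_\kappa\times\gamma}\bigr].
\]
The first summand already sits at index $\gamma$. For the second, the key observation is that whenever the second term is nonzero (i.e.\ $\zeta_\kappa(\gamma)\neq 0$), the element $w_\kappa\times\gamma$ is again a descent for $\kappa$ with the same defect and the same $\zeta$; this is visible type-by-type from Table \ref{table:Tagammakappa} (the only types contributing the cross term are {\tt 1r2,\,2r22,\,1C-,\,2C-,\,3C-}, and in each case the cross action is an involution sending descents to descents with matching invariants). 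So reindexing the cross-term contribution by setting $\gamma\mapsto w_\kappa\times\gamma$ produces, at index $\gamma$, the summand $\zeta_\kappa(\gamma)v^{-\ell(\kappa)+\defect_\gamma(\kappa)}\P(w_\kappa\times\gamma,\lambda)$, multiplied by the overall $v^{\defect_\gamma(\kappa)}$.

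Adding the two contributions yields the displayed formula \eqref{e:coefficient}. The only potential obstacle is the symmetry check for the cross term: I need to verify that in every descent type where $w_\kappa\times\gamma\neq\gamma$, namely {\tt 1C-,\,1r2,\,2C-,\,2r22,\,3C-}, the partner $w_\kappa\times\gamma$ is still a descent with the same $\defect$ and $\zeta_\kappa$, so that the reindexing is valid and the formula is well defined. This is a finite case check that I would carry out against Table \ref{table:Tagammakappa}.
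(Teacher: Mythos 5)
Your approach is the same as the paper's: expand $\Chat_\lambda=\sum\P(\gamma,\lambda)\a_\gamma$, apply $\T_\kappa$ termwise via Lemma \ref{l:Tkappaagamma1}, split into ascent and descent terms, and reindex (using $\defect_\gamma(\kappa)=\defect_\kappa(\gamma')$ for $\kappaarrow\gamma{\gamma'}$, and relabeling by $w_\kappa\times$) to collect the coefficient of $\a^\kappa_\gamma$.

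One small correction to your justification of the cross-term reindexing. You propose to check that for every descent type with $w_\kappa\times\gamma\neq\gamma$, namely {\tt 1C-, 1r2, 2C-, 2r22, 3C-}, the partner $w_\kappa\times\gamma$ is again a descent with the same $\defect$ and $\zeta_\kappa$. This check in fact \emph{fails} for {\tt 1C-, 2C-, 3C-}: there $w_\kappa\times\gamma$ has type {\tt 1C+, 2C+, 3C+}, which is an ascent for $\kappa$, so the invariants do not match. The reason the reindexing is nevertheless valid is that $\zeta_\kappa(\gamma)=0$ in exactly the {\tt C-} cases, so the cross term contributes nothing and there is nothing to move; the only types with a nonzero cross term and $w_\kappa\times\gamma\neq\gamma$ are {\tt 1r2} and {\tt 2r22}, and for those the cross action does preserve the type (hence $\tau$, $\defect$, and $\zeta$). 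Stating the check as you did would lead a careful reader to a false conclusion; phrase it as ``whenever the cross term is nonzero (so only for {\tt 1r2, 2r22}), $w_\kappa\times\gamma$ is again a descent of the same type'' and the argument is airtight. (You also wrote $\epsilon(\gamma,\gamma')$ where the Lemma states $\epsilon(\gamma',\gamma)$; by Definition \ref{d:epsilon} the sign is symmetric in the index pair, so this is harmless.)
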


Here is the information needed to make this explicit. Assume $\kappa\in\tau(\gamma)$.

If $t_\gamma(\kappa)=${\tt 1r2,2r22,1C-,2C-,3C-} then  $w_\kappa\times\gamma\ne\gamma$. In all other cases $w_\kappa\times\gamma=\gamma$. 

We need $\{\gamma'\mid\kappaarrow\gamma{\gamma'}\}$:

\begin{enumerate}
\item $t_\gamma(\kappa)=${\tt 1C-,2C-,3C-}: $w_\kappa\times\gamma$;
\item $t_\gamma(\kappa)=${\tt 1r2,2Cr,2r22,3Cr,3r}: $\gamma_\kappa$ (single valued);
\item $t_\gamma(\kappa)=${\tt 1r1f,2r21,2r11}:  $\{\gamma^1_\kappa,\gamma^2_\kappa\}$ (double valued);
\item $t_\gamma(\kappa)=${\tt 1r1s,1ic,2ic,3ic}: none
\end{enumerate}

The defect $\defect_\gamma(\kappa)$ is $1$ if $t_\gamma(\kappa)=${\tt 2Cr,3Cr,3r}, and $0$ otherwise.

$$
\zeta_\kappa(\gamma)=
\begin{cases}
1&t_\gamma(\kappa)=\text{\tt 1ic,2ic,3ic}\\
0&t_\gamma(\kappa)=\text{\tt 1C-,2C-,3C-}\\
-1&\text{otherwise}
\end{cases}
$$

\begin{mytable}
\hfil\break\newline
\label{table:akappagammainTClambda}
\begin{tabular}{|l|l|l|l|}
\hline
\multicolumn{3}{|c|}{coefficient of $\a^\kappa_\gamma$ in $\T_\kappa(\Chat_\lambda)$}\\\hline
$t_\gamma(\kappa)$ & first term on the RHS of \eqref{e:coefficient}&second term on RHS of \eqref{e:coefficient}\\
\hline
{\tt1C-}&$v\P(\gamma,\lambda)$&$\P(w_\kappa\times\gamma,\lambda)$\\
\hline
{\tt1r1f}&$(v-v\inv)\P(\gamma,\lambda)$&$\P(\gamma_\kappa^1,\lambda)+\P(\gamma_\kappa^2,\lambda)$\\
\hline
{\tt1r1s}&$(v+v\inv)\P(\gamma,\lambda)$&\\
\hline
{\tt1r2}&$v\P(\gamma,\lambda)-v\inv\P(w_\kappa\times\gamma,\lambda)$&$\P(\gamma_\kappa,\lambda)$\\
\hline
{\tt1ic}&$(v+v\inv)\P(\gamma,\lambda)$&\\
\hline
{\tt2C-}&$v^2\P(\gamma,\lambda)$&$\P(w_\kappa\times\gamma,\lambda)$\\
\hline
{\tt2Cr}&$v(v-v\inv)\P(\gamma,\lambda)$&$(v+v\inv)\P(\gamma_\kappa,\lambda)$\\
\hline
{\tt2r22}&$v^2\P(\gamma,\lambda)-v^{-2}\P(w_\kappa\times\gamma,\lambda)$&$\P(\gamma_{\kappa},\lambda)$\\
\hline
{\tt2r21}&$(v^2-v^{-2})\P(\gamma,\lambda)$&$\displaystyle\sum_{\gamma'|\kappaarrow\gamma{\gamma'}}
\epsilon(\gamma,\gamma')\P(\gamma',\lambda)$\\
\hline
{\tt2r11}&$(v^{2}-v^{-2})\P(\gamma,\lambda)$&$\P(\gamma_\kappa^1,\lambda)+\P(\gamma_\kappa^2,\lambda)$\\
\hline
{\tt2ic}&$(v^2+v^{-2})\P(\gamma,\lambda)$&\\
\hline
{\tt3C-}&$v^3\P(\gamma,\lambda)$&$\P(w_\kappa\times\gamma,\lambda)$\\
\hline
{\tt3Cr}&$v(v^2-v^{-2})\P(\gamma,\lambda)$&$(v+v\inv)\P(\gamma_\kappa,\lambda))$\\
\hline
{\tt3r}&$v(v^2-v^{-2})\P(\gamma,\lambda)$&$(v+v\inv)\P(\gamma_\kappa,\lambda)$\\
\hline
{\tt3ic}&$(v^3+v^{-3})\P(\gamma,\lambda)$&\\
\hline
\end{tabular}
\end{mytable}

Here is a condensed version of this table. Let $k=\ell(\kappa)$. 

\begin{mytable}
\hfil\break\newline
\label{table:akappagammainTClambda2}
\begin{tabular}{|l|l|l|l|}
\hline
\multicolumn{3}{|c|}{coefficient of $\a^\kappa_\gamma$ in $\T_\kappa(\Chat_\lambda)$}\\\hline
$t_\gamma(\kappa)$ & first term on the RHS of \eqref{e:coefficient}&second term on RHS of \eqref{e:coefficient}\\
\hline
{\tt 1C-,2C-,3C-}&$v^k\P(\gamma,\lambda)$&$\P(w_\kappa\times\gamma,\lambda)$\\
\hline
{\tt 1ic,2ic,3ic,1r1s}&$(v^k+v^{-k})\P(\gamma,\lambda)$&\\
\hline
{\tt 2Cr,3Cr,3r}&$v(v^{k-1}-v^{-k+1})\P(\gamma,\lambda)$&$(v+v\inv)\P(\gamma_\kappa,\lambda)$\\
\hline
{\tt1r1f,2r11}&$(v^k-v^{-k})\P(\gamma,\lambda)$&$\P(\gamma_\kappa^1,\lambda)+\P(\gamma_\kappa^2,\lambda)$\\
\hline
{\tt1r2,2r22}&$v^k\P(\gamma,\lambda)-v^{-k}\P(w_\kappa\times\gamma,\lambda)$&$\P(\gamma_\kappa,\lambda)$\\
\hline
{\tt2r21}&$(v^2-v^{-2})\P(\gamma,\lambda)$&$\displaystyle\sum_{\gamma'|\kappaarrow\gamma{\gamma'}}
\epsilon(\gamma,\gamma')\P(\gamma',\lambda)$\\\hline
\end{tabular}
\end{mytable}

\subsec{$\T_\kappa(\Chat_\mu)$ in terms of $\Chat_\gamma$}

\begin{lemma}
\label{l:tauinv}
Suppose $\gamma\in\D^\sigma$, $\kappa\in\overline S$. 
Then $\kappa\in \tau(\lambda)$ iff 
\begin{equation*}
\T_\kappa\Chat_\lambda =
(v^{\ell(\kappa)} + v^{-\ell(\kappa)})\Chat_\lambda.
\end{equation*}
\end{lemma}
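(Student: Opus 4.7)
The plan rests on the identity $\T_\kappa^2 = (v^{\ell(\kappa)}+v^{-\ell(\kappa)})\T_\kappa$, obtained by squaring $T_{w_\kappa}+1$ and using the quadratic relation \eqref{e:quadratic} in the form $(T_{w_\kappa}+1)^2 = (u^{\ell(\kappa)}+1)(T_{w_\kappa}+1)$, then dividing by $v^{2\ell(\kappa)}$. This makes $\T_\kappa$ diagonalizable on $M$ with eigenvalues $0$ and $v^{\ell(\kappa)}+v^{-\ell(\kappa)}$, giving $M = \mathrm{Im}(\T_\kappa) \oplus \ker(\T_\kappa)$ and confirming Lemma \ref{l:kappadescents}(1). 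Writing $\Chat_\lambda = I + K$ accordingly, the claimed equation $\T_\kappa \Chat_\lambda = (v^{\ell(\kappa)}+v^{-\ell(\kappa)})\Chat_\lambda$ is equivalent to $K = 0$, i.e.\ $\Chat_\lambda \in \mathrm{Im}(\T_\kappa)$. So the lemma reduces to the equivalence $\Chat_\lambda \in \mathrm{Im}(\T_\kappa) \iff \kappa \in \tau(\lambda)$.

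For the easier direction (contrapositive of $\Leftarrow$), assume $\kappa \notin \tau(\lambda)$, and suppose for contradiction $\Chat_\lambda = \sum_{\mu \mid \kappa \in \tau(\mu)} c_\mu \a^\kappa_\mu$ using the basis of $\mathrm{Im}(\T_\kappa)$ from Lemma \ref{l:kappadescents}(3). By \eqref{e:akappalambda}, $\a^\kappa_\mu = \a_\mu + \sum_{\kappaarrow{\mu}{\mu'}} v^{\ell(\mu')-\ell(\mu)}\epsilon(\mu,\mu')\a_{\mu'}$; and the length formula coming from Definition \ref{d:defect} gives $\ell(\mu') = \ell(\mu) - \ell(\kappa) + \defect_\mu(\kappa) < \ell(\mu)$ strictly (since $\defect \le 1 < \ell(\kappa)$ in every defect-1 case). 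Matching $\a_\mu$ leading terms, $\Chat_\lambda$ is supported on $\{\a_\gamma : \gamma \le \lambda\}$ forces $c_\mu = 0$ unless $\mu \le \lambda$. The coefficient of $\a_\lambda$ must equal $\P(\lambda,\lambda) = 1$; since $\kappa \notin \tau(\lambda)$, the only possible contributions come from $\mu$ with $\kappaarrow{\mu}{\lambda}$, but any such $\mu$ has $\ell(\mu) > \ell(\lambda)$, contradicting $\mu \le \lambda$.

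For the harder direction ($\Rightarrow$), I would proceed by induction on $\ell(\lambda)$. Set $E = \T_\kappa \Chat_\lambda - (v^{\ell(\kappa)}+v^{-\ell(\kappa)})\Chat_\lambda$. Then $\T_\kappa E = 0$ by the quadratic identity above. A direct check using $\bD(T_{w_\kappa}+1) = v^{-2\ell(\kappa)}(T_{w_\kappa}+1)$ (which follows from the quadratic relation and $\bD(T_{w_\kappa}) = T_{w_\kappa}^{-1}$) shows $\bD(\T_\kappa) = \T_\kappa$, so $E$ is bar-invariant. Expand $E = \sum_\mu e_\mu(v)\Chat_\mu$ in the KL basis; bar-invariance forces each $e_\mu$ to be bar-invariant. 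Using Lemma \ref{l:coefficient} together with Table \ref{table:akappagammainTClambda2}, one verifies that the coefficient of $\a_\lambda$ in $\T_\kappa \Chat_\lambda$ equals $(v^{\ell(\kappa)}+v^{-\ell(\kappa)}) = (v^{\ell(\kappa)}+v^{-\ell(\kappa)})\P(\lambda,\lambda)$, so $e_\lambda = 0$; the remaining $e_\mu$ are supported on $\ell(\mu) < \ell(\lambda)$. Then $0 = \T_\kappa E = \sum_\mu e_\mu \T_\kappa \Chat_\mu$; applying the inductive hypothesis to each $\mu$ with $\kappa \in \tau(\mu)$ (and separately forcing $e_\mu = 0$ on the complementary $\mu$'s by a leading-term argument identical to the backward direction above) yields $E = 0$.

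The main obstacle will be the step of showing $e_\lambda = 0$ in the KL-basis expansion of $E$. This is a leading-term computation that must be carried out case-by-case across the eight entries of Table \ref{table:akappagammainTClambda2} with $\kappa \in \tau(\lambda)$; it amounts to the KL-polynomial identities
\begin{equation*}
\P(\gamma,\lambda) \;=\; \sum_{\mu \mid \kappaarrow{\mu}{\gamma}} \P(\mu,\lambda)\, v^{\ell(\gamma)-\ell(\mu)}\,\epsilon(\mu,\gamma) \qquad (\kappa \notin \tau(\gamma),\ \gamma \le \lambda),
\end{equation*}
which hold precisely because the KL polynomials satisfy the recursion forced by Theorem \ref{t:Chatdelta} together with the explicit formulas for $\T_\kappa$.
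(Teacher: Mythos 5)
The paper does not prove this statement — it remarks ``This is the way `descent' is defined'' and cites \cite[Theorem 4.4(c)]{lv2012a} and \cite[Lemma 6.7]{voganparkcity}. So you are attempting a proof that the paper outsources entirely. The shape of your argument (reduce to $\Chat_\lambda\in\mathrm{Im}(\T_\kappa)$ via the eigenspace decomposition from the quadratic relation, backward direction by a leading-term argument, forward direction by showing the bar-invariant difference $E=\T_\kappa\Chat_\lambda-(v^{\ell(\kappa)}+v^{-\ell(\kappa)})\Chat_\lambda$ vanishes) is the correct and standard one; your backward direction and the check that $\bD$ commutes with $\T_\kappa$ are sound.

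The gap you flag is genuine, and more seriously it is circular as set up. The identity
\[
\P(\gamma,\lambda)\;=\;\sum_{\mu\mid\kappaarrow{\mu}{\gamma}}v^{\ell(\gamma)-\ell(\mu)}\epsilon(\mu,\gamma)\P(\mu,\lambda)
\qquad(\kappa\notin\tau(\gamma),\ \kappa\in\tau(\lambda))
\]
that you need to conclude $e_\lambda=0$ (and to push the induction to lower lengths) is precisely the ``easy recursion'' of Lemma \ref{l:easyrecursion}. But in this paper that recursion is \emph{derived from} the present lemma: the proof of Lemma \ref{l:basis} begins ``By Lemmas \ref{l:tauinv} and \ref{l:kappadescents}(3) we can also write $\Chat_\mu=\sum\wh{\mathcal R}^\sigma(\gamma,\mu)\a^\kappa_\gamma$,'' and Lemma \ref{l:easyrecursion} is read off from that expansion. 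You cannot then use the recursion to prove Lemma \ref{l:tauinv}. The way the cited sources close this loop is to prove both statements simultaneously by induction on $\ell(\lambda)$: when $\kappa\in\tau(\lambda)$, they \emph{construct} $\Chat_\lambda$ as $\T_\kappa\Chat_{\lambda'}$ (for some $\lambda'$ with $\kappaarrow\lambda{\lambda'}$, $\ell(\lambda')<\ell(\lambda)$) minus a correction $\sum_{\mu<\lambda}m_\kappa(\mu,\lambda')\Chat_\mu$, and the eigenvalue property of $\Chat_\lambda$ then follows from $\T_\kappa^2=(v^{\ell(\kappa)}+v^{-\ell(\kappa)})\T_\kappa$ together with the inductive hypothesis applied to the correction terms — it is part of the construction, not verified afterwards against already-known $\P(\gamma,\lambda)$. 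Your sketch could be repaired by recasting it in that simultaneous-induction form, but as written the appeal to the easy recursion does not establish what is needed.
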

This is the way ``descent'' is defined. In the geometric language of
\cite{lv2012b}, the condition means that the corresponding perverse
sheaf is pulled back from the partial flag variety of type
$\kappa$.
Compare
 {\cite[Theorem 4.4(c)]{lv2012a}} and \cite[Lemma 6.7]{voganparkcity}.

Recall the image of $\T_\kappa$ has $\{\a^\kappa_\gamma\mid\gamma\in\D^\sigma,\kappa\in\tau(\gamma)\}$ as a basis. 
We can use 
 $\{\Chat_\gamma\mid\gamma\in\D^\sigma,\kappa\in\tau(\gamma)\}$ instead.

\begin{lemma}\label{l:basis}
Fix $\kappa\in\overline S$.
\begin{enumerate}
\item Suppose $\mu\in\D^\sigma$, and  $\kappa\in\tau(\mu)$. Then
\begin{equation}
\label{e:Clambdaakappalambda}
\Chat_\mu=\sum_{\gamma|\kappa\in\tau(\gamma)}
\P(\gamma,\mu)\a^\kappa_\gamma;  
\end{equation}
The coefficient polynomials are exactly the ones from Theorem \ref{t:Chatdelta}.
In particular $\Chat_\mu$ is in the image of $\T_\kappa$.

\item The elements
$$
\{\Chat_\mu \mid \mu \in \D^\sigma, \ \kappa\in \tau(\mu)\}$$
form a basis of the image of $\T_\kappa$. 
\end{enumerate}
\end{lemma}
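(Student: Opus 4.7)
The plan is to build this lemma on the structural results already established in Lemma \ref{l:tauinv} and Lemma \ref{l:kappadescents}. Both parts reduce to careful bookkeeping of coefficients once we recognize that $\Chat_\mu$ lies in the image of $\T_\kappa$ whenever $\kappa\in\tau(\mu)$.

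For part (1), first I would observe that $\Chat_\mu$ lies in the image of $\T_\kappa$: by Lemma \ref{l:tauinv} we have $\T_\kappa\Chat_\mu=(v^{\ell(\kappa)}+v^{-\ell(\kappa)})\Chat_\mu$, and Lemma \ref{l:kappadescents}(1) identifies the image of $\T_\kappa$ with the $(v^{\ell(\kappa)}+v^{-\ell(\kappa)})$-eigenspace. Using the basis of the image provided by Lemma \ref{l:kappadescents}(3), I can therefore write
$$\Chat_\mu=\sum_{\gamma\mid\kappa\in\tau(\gamma)}Q(\gamma,\mu)\,\a^\kappa_\gamma$$
for uniquely determined polynomials $Q(\gamma,\mu)$. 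The task is then to identify these with $\P(\gamma,\mu)$.

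The key observation is the triangular structure of \eqref{e:akappalambda}: when $\kappa\in\tau(\gamma)$, the correction terms in $\a^\kappa_\gamma$ involve only $\a_{\gamma'}$ with $\kappaarrow\gamma{\gamma'}$, and by Definition \ref{d:kappaarrow} all such $\gamma'$ satisfy $\kappa\notin\tau(\gamma')$. Consequently, for any $\gamma_0$ with $\kappa\in\tau(\gamma_0)$, the coefficient of $\a_{\gamma_0}$ in $\sum_{\gamma\mid\kappa\in\tau(\gamma)}Q(\gamma,\mu)\a^\kappa_\gamma$ comes only from the leading term of $\a^\kappa_{\gamma_0}$ and equals $Q(\gamma_0,\mu)$. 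Comparing with the expansion $\Chat_\mu=\sum_\gamma\P(\gamma,\mu)\a_\gamma$ from Theorem \ref{t:Chatdelta} immediately yields $Q(\gamma_0,\mu)=\P(\gamma_0,\mu)$ for all such $\gamma_0$, which is exactly \eqref{e:Clambdaakappalambda}.

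For part (2), linear independence is free: the full collection $\{\Chat_\delta\mid\delta\in\D^\sigma\}$ is a basis of $M$ by Theorem \ref{t:Chatdelta}, so any subfamily is linearly independent. By part (1), each $\Chat_\mu$ with $\kappa\in\tau(\mu)$ already lies in the image of $\T_\kappa$. Finally, the cardinality of $\{\mu\mid\kappa\in\tau(\mu)\}$ matches that of the basis $\{\a^\kappa_\gamma\mid\kappa\in\tau(\gamma)\}$ from Lemma \ref{l:kappadescents}(3), which is $\dim(\mathrm{im}\,\T_\kappa)$; hence the family spans, and so forms a basis. I do not anticipate any serious obstacle beyond carefully tracking which parameters carry $\kappa$ in their $\tau$-invariant at each step — the triangular expansion of $\a^\kappa_\gamma$ and the eigenspace description of the image do all of the real work.
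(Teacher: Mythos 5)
Your proposal is correct and follows essentially the same route as the paper's proof: both use Lemmas \ref{l:tauinv} and \ref{l:kappadescents} to place $\Chat_\mu$ in the image of $\T_\kappa$, expand it in the $\a^\kappa_\gamma$ basis, and exploit the triangularity of the correction terms in \eqref{e:akappalambda} (the $\gamma'$ there satisfy $\kappa\not\in\tau(\gamma')$) to match coefficients of $\a_\gamma$ against Theorem \ref{t:Chatdelta}. The only difference is that you spell out the linear independence and dimension count for part (2), which the paper simply states as immediate.
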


\begin{proof}
For (1), write
\begin{subequations}
\label{e:easy}
\renewcommand{\theequation}{\theparentequation)(\alph{equation}}  
\begin{equation}
\Chat_\mu=\sum_{\gamma|\kappa\in\tau(\gamma)}\P(\gamma,\mu)\a_\gamma+
\sum_{\gamma|\kappa\not\in\tau(\gamma)}\P(\gamma,\mu)\a_\gamma.
\end{equation}
By Lemmas \ref{l:tauinv} and \ref{l:kappadescents}(3)
we can also write
$\Chat_\mu=\displaystyle\sum_{\gamma|\kappa\in\tau(\gamma)}
\wh{\mathcal R}^\sigma(\gamma,\mu)\a^\kappa_\gamma$
for some $\wh{\mathcal R}^\sigma(\gamma,\mu)\in\Z[v,v\inv]$. 
Plugging in the definition of $\a_\gamma$ 
(Lemma  \ref{l:kappadescents}(2)) gives
\begin{equation}
\label{e:easy1}
\Chat_\mu=\sum_{\gamma|\kappa\in\tau(\gamma)}
\wh{\mathcal R}^\sigma(\gamma,\mu)[\a_\gamma+
v^{\ell(\gamma')-\ell(\gamma)}
\sum_{\gamma'|\kappaarrow{\gamma}{\gamma'}}\epsilon(\gamma,\gamma')\a_{\gamma'}].
\end{equation}
Since $\kappa\not\in\tau(\gamma')$ for each term in the last sum,
comparing coefficients of $\a_\gamma$ ($\kappa\in\tau(\gamma)$) in (a) and (b)  gives 
$\wh{\mathcal R}(\gamma,\mu)=\P(\gamma,\mu)$. This gives (1), and (2) follows.
\end{subequations}
\end{proof}

Comparing the coefficients of $\a_\gamma$ with
$\kappa\not\in\tau(\gamma)$ 
gives the ``easy'' recurrence relations for the $\P(\gamma,\lambda)$.
See  Section \ref{s:recursion}.

Next we want to compute $\T_\kappa\Chat_\lambda$ in the basis
$\Chat_\gamma$. When $\kappa\in\tau(\lambda)$ this is given in Lemma
\ref{l:tauinv}. 
We turn now to the case $\kappa\not\in\tau(\lambda)$.


\begin{lemma}[compare {\cite[Theorem 4.4(a,b)]{lv2012a}}]
\label{l:sumtau}
Suppose $\kappa\not\in\tau(\lambda)$. Then
\begin{equation}
\label{e:sumtau}
\T_\kappa\Chat_\lambda=\sum_{\gamma|\kappa\in\tau(\gamma)}m_\kappa(\gamma,\lambda)\Chat_\gamma
\end{equation}
for some $m_\kappa(\gamma,\lambda)\in\Z[v,v\inv]$. 
Each $m_\kappa(\gamma,\lambda)$ is self-dual, and is of the form

$$
m_\kappa(\gamma,\lambda)=
\begin{cases}
m_{\kappa,0}(\gamma,\lambda)&\ell(\kappa)=1\\  
m_{\kappa,0}(\gamma,\lambda)+m_{\kappa,1}(\gamma,\lambda)(v+v\inv)&\ell(\kappa)=2\\  
m_{\kappa,0}(\gamma,\lambda)+m_{\kappa,1}(\gamma,\lambda)(v+v\inv)+m_{\kappa,2}(\gamma,\lambda)(v^2+v^{-2})&\ell(\kappa)=3
\end{cases}
$$
for some integers $m_{\kappa,i}(\gamma,\lambda)$.

\end{lemma}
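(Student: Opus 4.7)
The plan is to argue in three steps: uniqueness of the expansion, self-duality of each coefficient, and the degree bound that yields the stated form. Existence and uniqueness are immediate: $\T_\kappa\Chat_\lambda$ lies in the image of $\T_\kappa$, and Lemma \ref{l:basis}(2) exhibits $\{\Chat_\gamma\mid\kappa\in\tau(\gamma)\}$ as a basis of that image, so the coefficients $m_\kappa(\gamma,\lambda)\in\Z[v,v\inv]$ exist and are unique.

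For self-duality I would first verify that $\T_\kappa$ itself is $\bD$-invariant. Using the quadratic relation \eqref{e:quadratic}, one computes $T_{w_\kappa}^{-1}=u^{-\ell(\kappa)}T_{w_\kappa}+u^{-\ell(\kappa)}-1$, and then $\overline{v^{-\ell(\kappa)}(T_{w_\kappa}+1)}=v^{\ell(\kappa)}(T_{w_\kappa}^{-1}+1)=v^{-\ell(\kappa)}(T_{w_\kappa}+1)=\T_\kappa$. Combined with $\bD\Chat_\lambda=\Chat_\lambda$ (Theorem \ref{t:Chatdelta}(1)) and the uniqueness above, this forces each $m_\kappa(\gamma,\lambda)$ to be a self-dual Laurent polynomial in $v$.

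The main obstacle is the degree bound $\deg_v m_\kappa(\gamma,\lambda)\le\ell(\kappa)-1$. My plan is to expand $\T_\kappa\Chat_\lambda$ in the basis $\{\a^\kappa_\mu\}$ via Lemma \ref{l:coefficient}. Since $\kappa\in\tau(\mu)$ and $\kappa\not\in\tau(\lambda)$ we have $\mu\ne\lambda$, so Theorem \ref{t:Chatdelta}(4) forces $\P(\mu,\lambda)\in v\inv\Z[v\inv]$; for the auxiliary $\P(\mu',\lambda)$ appearing in the second column of Table \ref{table:akappagammainTClambda2} (coming from $w_\kappa\times\mu$, $\mu_\kappa$, $\mu_\kappa^i$, or the {\tt 2r21}-sum), $\mu'=\lambda$ is possible but then $\P(\mu',\lambda)=1$, and the accompanying prefactor still keeps the total $v$-degree at most $\ell(\kappa)-1$. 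A row-by-row check of the six descent-type entries of the table confirms the bound uniformly.

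Finally, by Lemma \ref{l:basis}(1) the transition matrix from $\{\a^\kappa_\mu\}$ to $\{\Chat_\mu\}$ (both indexed by $\kappa$-descents) is upper unitriangular with off-diagonal entries in $v\inv\Z[v\inv]$, so its inverse has the same shape. A downward induction on $\mu$ then shows that $m_\kappa(\gamma,\lambda)$ inherits the top $v$-degree bound of $\ell(\kappa)-1$ from the $\a^\kappa$-coefficients. A self-dual Laurent polynomial of top $v$-degree at most $\ell(\kappa)-1$ is precisely an integer combination of $1,\,v+v\inv,\,\dots,\,v^{\ell(\kappa)-1}+v^{-(\ell(\kappa)-1)}$, which is the asserted form.
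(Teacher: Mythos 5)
Your proof is correct and follows essentially the same route as the paper: existence from Lemma \ref{l:basis}(2), and the degree bound by downward induction on the length of $\gamma$, using the upper-unitriangular change of basis from $\{\a^\kappa_\mu\}$ to $\{\Chat_\mu\}$ provided by Lemma \ref{l:basis}(1) together with the explicit degree-$\le\ell(\kappa)-1$ bounds read off from Table \ref{table:akappagammainTClambda2}. The one place you go further than the paper is self-duality: the paper simply cites \cite[4.8(e)]{lv2012a}, whereas you derive bar-invariance of $\T_\kappa$ directly from the quadratic relation via $T_{w_\kappa}^{-1}=u^{-\ell(\kappa)}T_{w_\kappa}+u^{-\ell(\kappa)}-1$. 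That computation is correct, but it tacitly uses that $\bD$ is semilinear for the full bar involution $T_w\mapsto T_{w^{-1}}^{-1}$ on $\bH$, not only for $u\mapsto u^{-1}$, which is the only property of $\bD$ the paper records explicitly; you should state this as an input rather than leave it implicit.
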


\begin{proof} 
The existence of $m_\kappa(\gamma,\delta)$ is (2) of Lemma \ref{l:basis}.
That the left side of \eqref{e:sumtau} is self-dual is
\cite[4.8(e)]{lv2012a}, and since $\Chat_\delta$ is self-dual this implies $m_\kappa(\gamma,\delta)$ is self-dual. 

The highest order term of $m_\kappa(\gamma,\lambda)$ is $v^{\ell(\kappa)-1}$. 
This follows by downward induction on $\ell(\gamma)$.
See \cite[pg. 17 (Proof of Theorem 4.4)]{lv2012a}.
This gives the remaining assertion.
\end{proof}

\begin{remarkplain}
It is easy to see $m_\kappa(\gamma,\lambda)\ne 0$ implies
$\kappaarrow\gamma\lambda$, or
$\gamma<\lambda$, or  $\kappaarrow\gamma{\gamma'}$ for some 
$\gamma'<\lambda$. We make this more precise in Lemmas \ref{l:mnotzero} and \ref{l:kappalesslength}. 
\end{remarkplain}


In the classical setting  $\mu(\gamma,\lambda)$ is defined to be the coefficient of 
the top degree term in $P(\gamma,\lambda)$, i.e. 
$q^{\frac12(\ell(\lambda)-\ell(\gamma)-1)}$. 
Furthermore if $\gamma<\lambda$ then 
$m_\kappa(\gamma,\lambda)=\mu(\gamma,\lambda)$.
 
With our normalization  the top degree term in $\P(\gamma,\lambda)$
is $v\inv$, which is zero unless
 $\ell(\lambda)-\ell(\gamma)$ is odd. We need a generalization to take $\kappa$ of length $2,3$ into account. 

\begin{definition}
For $i=-1,-2,-3$ let 
 $\Mu_i(\gamma,\lambda)$ be the coefficient of    $v^i$ in $\P(\gamma,\lambda)$. 
\end{definition}

So
$$
\P(\gamma,\lambda)=
\Mu_{-3}(\gamma,\lambda)v^{-3}+
\Mu_{-2}(\gamma,\lambda)v^{-2}+
\Mu_{-1}(\gamma,\lambda)v^{-1}\mypmod{v^{-4}}.
$$
It is clear that
\begin{equation}
\label{e:lambdazero}
\Mu_{-k}(\gamma,\lambda)=0\quad\text{ unless }\ell(\gamma)-\ell(\lambda)=k\mypmod2.
\end{equation}



We can now state the main result of this section.

\begin{theorem}
\label{t:T_kappa}
Suppose $\kappa\not\in\tau(\lambda)$. Then
\begin{equation*}
\T_\kappa\Chat_\lambda=
\sum_{\gamma|\kappa\in\tau(\gamma)}m_\kappa(\gamma,\lambda)\Chat_\gamma.
\end{equation*}
for coefficients 
 $m_\kappa(\gamma,\lambda)$ given as follows.

\begin{enumerate}
\item  If $\kappaarrow\gamma\lambda$ then
\begin{equation}
m_\kappa(\gamma,\lambda)=(v+v\inv)^{\defect_\kappa(\gamma)}\epsilon(\gamma,\lambda)=
\begin{cases}
\epsilon(\gamma,\lambda)&\defect_\lambda(\kappa)=0\\
(v+v\inv)&\defect_\lambda(\kappa)=1
\end{cases}
\end{equation}
(recall  $\epsilon(\lambda,\gamma)=\pm1$, and is $1$
unless $t_\gamma(\kappa)=${\tt 2r21}, cf. Definition \ref{d:epsilon}). 

\item Assume $\gamma\overset\kappa{\not\rightarrow}\lambda$, and $\ell(\kappa)=1$. 
Then 
$$
m_\kappa(\gamma,\lambda)=\Mu_{-1}(\gamma,\lambda).
$$
\item Assume $\gamma\overset\kappa{\not\rightarrow}\lambda$, and $\ell(\kappa)=2$. 
\begin{enumerate}
\item If $\ell(\gamma)\not\equiv\ell(\lambda)\mypmod2$ then
$$
m_\kappa(\gamma,\lambda)=\Mu_{-1}(\gamma,\lambda)(v+v\inv).
$$
\item If  $\ell(\gamma)\equiv\ell(\lambda)\mypmod2$ then
$$
\begin{aligned}
m_\kappa(\gamma,\lambda)&=
\Mu_{-2}(\gamma,\lambda)-
\sum_{\substack{\delta\\\kappa\in\tau(\delta)\\\gamma<\delta<\lambda}}
\Mu_{-1}(\gamma,\delta)\Mu_{-1}(\delta,\lambda)
\\
&-
\left\{\begin{aligned} 
&\Mu_{-1}(\gamma,\lambda^\kappa)\ &&t_\lambda(\kappa)=2Ci\\
&0\ &&\text{else}\end{aligned}\right\}
+
\left\{\begin{aligned} 
&\Mu_{-1}(\gamma_\kappa,\lambda)\ &&t_\gamma(\kappa)=2Cr\\
&0\ &&\text{else} \end{aligned}\right\} 
\end{aligned}
$$
\end{enumerate}

\item Assume $\gamma\overset\kappa{\not\rightarrow}\lambda$, and $\ell(\kappa)=3$. 

  \begin{enumerate}
  \item If $\ell(\gamma)\equiv\ell(\lambda)\mypmod2$ then
$$
m_\kappa(\gamma,\lambda)=
\big[\Mu_{-2}(\gamma,\lambda)-\sum_{\substack{\delta\\\kappa\in\tau(\delta)\\\gamma<\delta<\lambda\\}}
 \Mu_{-1}(\gamma,\delta)\Mu_{-1}(\delta,\lambda)\big](v+v\inv)
$$
\item If $\ell(\gamma)\not\equiv\ell(\lambda)\mypmod2$ then
$$
\begin{aligned}
m_\kappa(\gamma,\lambda)&=\Mu_{-1}(\gamma,\lambda)(v^2+v^{-2})+\Mu_{-3}(\gamma,\lambda)\\
&+\sum_{\substack{\delta,\phi\\\kappa\in\tau(\delta),\kappa\in\tau(\phi)\\\gamma<\delta<\phi<\lambda\\}} 
\Mu_{-1}(\gamma,\delta)
\Mu_{-1}(\delta,\phi)\Mu_{-1}(\phi,\lambda)+\\
&-
\sum_{\substack{\delta\\\kappa\in\tau(\delta)\\\gamma<\delta<\lambda\\}} 
\big[
\Mu_{-1}(\gamma,\delta)\Mu_{-2}(\delta,\lambda)+
\Mu_{-2}(\gamma,\delta)\Mu_{-1}(\delta,\lambda)\big]\\
\\
&-\begin{cases}
\Mu_{-1}(\gamma,\lambda^\kappa)&t_\lambda(\kappa)=3Ci\text{ or }3i\\
0&else
\end{cases}\\
&
+
\begin{cases}
\Mu_{-1}(\gamma_\kappa,\lambda)&t_\gamma(\kappa)=3Cr\text{ or }3r\\
0&\text{else}
\end{cases}
\end{aligned}
$$
\end{enumerate}
\end{enumerate}
\end{theorem}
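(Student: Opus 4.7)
The strategy is to compute the coefficient of $\a^\kappa_\gamma$ in $\T_\kappa\Chat_\lambda$ in two ways and equate. Lemma~\ref{l:coefficient} provides one expression, namely the right-hand side of~\eqref{e:coefficient}. For the other, substitute the ansatz of Lemma~\ref{l:sumtau} and expand each $\Chat_\delta$ using Lemma~\ref{l:basis}(1) to obtain
\[
\T_\kappa\Chat_\lambda=\sum_{\gamma\mid\kappa\in\tau(\gamma)}\Bigl[\sum_{\delta\mid\kappa\in\tau(\delta)} m_\kappa(\delta,\lambda)\P(\gamma,\delta)\Bigr]\a^\kappa_\gamma.
\]
Since $\P(\gamma,\gamma)=1$ and $\P(\gamma,\delta)\in v\inv\Z[v\inv]$ for $\delta>\gamma$, the resulting linear system in the $m_\kappa(\delta,\lambda)$ is triangular with diagonal $m_\kappa(\gamma,\lambda)$, and the off-diagonal contributions lie in $v\inv\Z[v\inv,v]$ of total $v$-degree at most $\ell(\kappa)-2$. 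Combined with self-duality and the prescribed form of $m_\kappa$ from Lemma~\ref{l:sumtau}, this determines $m_\kappa(\gamma,\lambda)$ uniquely from the recursion.

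For case~(1), when $\kappaarrow\gamma\lambda$, the correction sum in~\eqref{e:coefficient} contains the term $\gamma'=\lambda$ giving $\epsilon(\gamma,\lambda)\P(\lambda,\lambda)=\epsilon(\gamma,\lambda)$, while every other $\gamma'$ satisfies $\gamma'<\lambda$ and contributes in $v\inv\Z[v\inv]$. Matching the positive-degree part of the equation then forces $(v+v\inv)^{\defect_\gamma(\kappa)}\epsilon(\gamma,\lambda)$ as the unique self-dual candidate of the prescribed form, which is the formula in~(1).

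In cases~(2)--(4), with $\gamma\overset\kappa{\not\rightarrow}\lambda$, no such ``free'' $\gamma'=\lambda$ term exists, and one reads off $m_\kappa(\gamma,\lambda)$ from $\P(\gamma,\lambda)$, $\P(w_\kappa\times\gamma,\lambda)$, and the correction sum by extracting successively lower powers of $v$. For $\ell(\kappa)=1$ this gives~(2) immediately by reading the constant term of~\eqref{e:coefficient}, which in each descent row of Table~\ref{table:akappagammainTClambda2} is exactly $\Mu_{-1}(\gamma,\lambda)$. For $\ell(\kappa)=2,3$ the parity constraint~\eqref{e:lambdazero} on the $\Mu_{-i}$ produces the dichotomies (3)(a)/(3)(b) and (4)(a)/(4)(b). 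The higher-order coefficients $m_{\kappa,1}(\delta,\lambda)$ or $m_{\kappa,2}(\delta,\lambda)$ for $\delta>\gamma$, already determined inductively via case~(1), then pair with the top coefficients $\Mu_{-j}(\gamma,\delta)$ of $\P(\gamma,\delta)$ in the off-diagonal sum to produce exactly the bilinear (and, for (4)(b), trilinear) correction terms in the stated formulas.

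The main obstacle will be the defect-$1$ subcases~{\tt 2Ci, 2Cr, 3Ci, 3Cr, 3i, 3r}. Here the factor $(v+v\inv)$ applied to the correction sum in~\eqref{e:coefficient} shifts the degrees of the $\P(\gamma',\lambda)$-terms by $\pm 1$, producing contributions in the same degree range as the $\P(\gamma,\lambda)$ and $\P(w_\kappa\times\gamma,\lambda)$ terms. Isolating these collisions is what produces the extra signed terms $-\Mu_{-1}(\gamma,\lambda^\kappa)$ (when $t_\lambda(\kappa)\in\{${\tt 2Ci, 3Ci, 3i}$\}$) and $+\Mu_{-1}(\gamma_\kappa,\lambda)$ (when $t_\gamma(\kappa)\in\{${\tt 2Cr, 3Cr, 3r}$\}$) that appear in the stated formulas. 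Verifying the exact signs and coefficients of these collisions in each subcase, together with the triple-sum correction in~(4)(b) arising from one further iteration of the inductive substitution, will constitute the bulk of the computation.
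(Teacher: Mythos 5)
Your plan is essentially the paper's own argument in Appendix~I: both start from the identity $\sum_{\delta\mid\kappa\in\tau(\delta)}\P(\gamma,\delta)\,m_\kappa(\delta,\lambda)=\text{(coefficient of $\a^\kappa_\gamma$)}$, exploit that $\P(\gamma,\gamma)=1$ while the off-diagonal products $\P(\gamma,\delta)m_\kappa(\delta,\lambda)$ have $v$-degree at most $\ell(\kappa)-2$, and then read off $m_{\kappa,\ell(\kappa)-1},\dots,m_{\kappa,0}$ by comparing the nonnegative-degree parts and re-substituting. The paper implements this by taking the ``$+$ part'' $f\mapsto f^+$ of both sides and peeling off the $v^{\ell(\kappa)-1}, v^{\ell(\kappa)-2},\dots$ coefficients in turn, which is exactly your triangular elimination.

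One small caution on your last paragraph: you attribute both extra defect terms to a degree-shift collision coming from the $(v+v\inv)$ factor in \eqref{e:coefficient}. That is right for $+\Mu_{-1}(\gamma_\kappa,\lambda)$, which does appear directly when $\defect_\gamma(\kappa)=1$. But the $-\Mu_{-1}(\gamma,\lambda^\kappa)$ term (the case $t_\lambda(\kappa)\in\{\text{\tt 2Ci,3Ci,3i}\}$) arises even when $\defect_\gamma(\kappa)=0$, so there is no $(v+v\inv)$ factor in the coefficient formula for $\gamma$ to collide. In the paper it enters one step later: $m_{\kappa,\ell(\kappa)-2}(\delta,\lambda)$ picks up a Kronecker delta $\delta_{\delta_\kappa,\lambda}$ precisely when $t_\delta(\kappa)\in\{\text{\tt 2Cr,3Cr,3r}\}$, and the off-diagonal sum $\sum_\delta \Mu_{-1}(\gamma,\delta)m_{\kappa,\ell(\kappa)-2}(\delta,\lambda)$ then concentrates this on $\delta=\lambda^\kappa$, giving $\Mu_{-1}(\gamma,\lambda^\kappa)$. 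So it is the ``further iteration of the inductive substitution'' you mention for the triple sum, not a collision at level $\gamma$, that is responsible for this term too.
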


I've written the proof
in great length for the sake of finding errors. See the Appendix.

Theorem \ref{t:T_kappa} gives us a basic identity which we use repeatedly.

Suppose $\kappa\not\in\tau(\lambda)$. By Theorem \ref{t:T_kappa}
and \eqref{e:P}:
\begin{equation}
\begin{aligned}
\T_\kappa\Chat_\lambda&=
\sum_{\delta|\kappa\in\tau(\delta)}
m_\kappa(\delta,\lambda)\Chat_\delta\\
&=\sum_{\delta|\kappa\in\tau(\delta)}
m_\kappa(\delta,\lambda)\sum_{\gamma}\P(\gamma,\lambda)\a_\gamma\\
&=\sum_\gamma\big[\sum_{\delta|\kappa\in\tau(\delta)}\P(\gamma,\delta)m_\kappa(\delta,\lambda)\big]\a_\gamma
\end{aligned}
\end{equation}

\begin{proposition}
\label{p:basicidentity}
Fix $\kappa\in\overline S$,  $\gamma,\lambda\in\D^\sigma$, with $\kappa\not\in\tau(\lambda)$. Then
\begin{equation}
\label{e:basicidentity}
\sum_{\delta|\kappa\in\tau(\delta)}\P(\gamma,\delta)m_\kappa(\delta,\lambda)=
\text{multiplicity of $\a_\gamma$ in }\T_\kappa(\Chat_\lambda)
\end{equation}
If $\kappa\in\tau(\gamma)$ the same equality holds with $\a^\kappa_\gamma$ on the right hand side.
\end{proposition}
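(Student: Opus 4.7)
The plan is to directly chain together Theorem \ref{t:T_kappa} with the two available expansions of $\Chat_\delta$ — the usual one in the $\{\a_\gamma\}$ basis, and the ``image-of-$\T_\kappa$'' expansion in the $\{\a^\kappa_{\gamma'}\}$ basis from Lemma \ref{l:basis}(1). The calculation is essentially already performed in the display immediately preceding the proposition statement; I only need to repackage it and add the one extra step for the addendum.

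First, since $\kappa\not\in\tau(\lambda)$, Theorem \ref{t:T_kappa} gives
\[
\T_\kappa\Chat_\lambda \;=\; \sum_{\delta:\kappa\in\tau(\delta)} m_\kappa(\delta,\lambda)\,\Chat_\delta.
\]
Expanding each $\Chat_\delta$ via \eqref{e:P} and interchanging the order of summation yields
\[
\T_\kappa\Chat_\lambda \;=\; \sum_\gamma \Bigl[\, \sum_{\delta:\kappa\in\tau(\delta)} \P(\gamma,\delta)\,m_\kappa(\delta,\lambda) \Bigr]\, \a_\gamma.
\]
Reading off the coefficient of $\a_\gamma$ produces the first stated identity.

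For the addendum, when $\kappa\in\tau(\gamma)$ we want the same conclusion but with $\a^\kappa_\gamma$ on the right. The key point is Lemma \ref{l:basis}(1): for every $\delta$ with $\kappa\in\tau(\delta)$ — and these are precisely the $\delta$'s appearing above — one has $\Chat_\delta = \sum_{\gamma':\kappa\in\tau(\gamma')} \P(\gamma',\delta)\,\a^\kappa_{\gamma'}$ with the \emph{same} coefficient polynomials $\P(\gamma',\delta)$. Substituting this expansion instead and swapping sums gives
\[
\T_\kappa\Chat_\lambda \;=\; \sum_{\gamma':\kappa\in\tau(\gamma')} \Bigl[\, \sum_{\delta:\kappa\in\tau(\delta)} \P(\gamma',\delta)\,m_\kappa(\delta,\lambda) \Bigr]\, \a^\kappa_{\gamma'}.
\]
Since $\{\a^\kappa_{\gamma'} \mid \kappa\in\tau(\gamma')\}$ is a basis of the image of $\T_\kappa$ (Lemma \ref{l:kappadescents}(3)) and the left-hand side lies in that image, comparing coefficients of $\a^\kappa_\gamma$ yields the second identity.

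There is no real obstacle here; the content of the proposition is the observation that Theorem \ref{t:T_kappa} can be evaluated against either basis and produces the same bracketed sum in each case, because Lemma \ref{l:basis}(1) guarantees the \emph{same} $\P$'s appear in both expansions of $\Chat_\delta$. The only mild point to keep straight is that the $\a^\kappa_{\gamma'}$ notation is only defined when $\kappa\in\tau(\gamma')$, which is why the second identity must be stated under that hypothesis on $\gamma$.
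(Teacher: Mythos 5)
Your argument is correct and is essentially the paper's: the first identity is exactly the three-line computation displayed immediately before the proposition (expand $\T_\kappa\Chat_\lambda$ via Lemma \ref{l:sumtau}/Theorem \ref{t:T_kappa}, then expand each $\Chat_\delta$ by \eqref{e:P}, and swap sums), and the addendum follows by replacing \eqref{e:P} with the expansion of $\Chat_\delta$ in the $\a^\kappa$ basis from Lemma \ref{l:basis}(1), which was proved precisely so that the \emph{same} polynomials $\P(\gamma,\delta)$ appear there. No gap.
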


\subsec{Nonvanishing of $m_\kappa(\gamma,\lambda)$}

It is important to know when $m_\kappa(\gamma,\lambda)$ can be nonzero.

\begin{lemma}
\label{l:mnotzero}
Assume $\kappa\in\tau(\gamma),\kappa\not\in\tau(\lambda)$, and $m_\kappa(\gamma,\lambda)\ne 0$. Then
one of the following  conditions  holds:
\begin{enumerate}
  \item[(a)] $\kappaarrow\gamma\lambda$
\item[(b)] $\gamma<\lambda$
\item[(c)] $\defect_\gamma(\kappa)=1$, $\gamma\not<\lambda$, and
  $\Mu_{-1}(\gamma_\kappa,\lambda)\ne 0$.
\item[(c$'$)] $\defect_\lambda(\kappa)=1$, $\gamma\not<\lambda$, and
  $\Mu_{-1}(\gamma,\lambda^\kappa)\ne 0$.
\end{enumerate}
\end{lemma}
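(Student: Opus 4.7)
The plan is to derive the lemma directly from the formulas in Theorem \ref{t:T_kappa}, which expresses $m_\kappa(\gamma,\lambda)$ explicitly in each case. First I would dispose of the easy case: if $\kappaarrow\gamma\lambda$, conclusion (a) holds, so from here on I assume $\gamma\overset\kappa{\not\rightarrow}\lambda$ and appeal to cases (2), (3), (4) of that theorem.

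The key general principle I would establish up front is that every $\Mu$-coefficient $\Mu_{-j}(\alpha,\beta)$ that ever appears satisfies: $\Mu_{-j}(\alpha,\beta)=0$ unless $\alpha<\beta$. Indeed, by Theorem \ref{t:Chatdelta}(2), $\P(\alpha,\beta)=0$ if $\alpha\not\le\beta$, and by (3), $\P(\alpha,\alpha)=1$ has no negative $v$-powers, so $\Mu_{-j}(\alpha,\alpha)=0$ for $j\ge 1$.

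Next I would classify the $\Mu$-terms appearing in the formulas of Theorem \ref{t:T_kappa} into two types. The \emph{main} terms are $\Mu_{-j}(\gamma,\lambda)$ itself together with the products $\Mu_{-i}(\gamma,\delta)\Mu_{-j}(\delta,\lambda)$ and the triple products $\Mu_{-1}(\gamma,\delta)\Mu_{-1}(\delta,\phi)\Mu_{-1}(\phi,\lambda)$ indexed over chains $\gamma<\delta$ (and $\gamma<\delta<\phi<\lambda$). By the general principle, each of these vanishes unless $\gamma<\lambda$; and the sums over $\delta$ or over $(\delta,\phi)$ are already restricted to the range $\gamma<\delta<\lambda$, hence are empty when $\gamma\not<\lambda$. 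So nonvanishing of the main part forces conclusion (b).

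The \emph{exceptional} terms are the corrections $-\Mu_{-1}(\gamma,\lambda^\kappa)$, present only when $t_\lambda(\kappa)\in\{\mathtt{2Ci},\mathtt{3Ci},\mathtt{3i}\}$, and $+\Mu_{-1}(\gamma_\kappa,\lambda)$, present only when $t_\gamma(\kappa)\in\{\mathtt{2Cr},\mathtt{3Cr},\mathtt{3r}\}$. By Lemma \ref{l:casesofd}, these two trigger conditions are exactly $\defect_\lambda(\kappa)=1$ and $\defect_\gamma(\kappa)=1$ respectively. Thus if $\gamma\not<\lambda$ and (a) fails, the only way $m_\kappa(\gamma,\lambda)$ can be nonzero is if one of the exceptional corrections survives, yielding conclusion (c$'$) in the first case and (c) in the second. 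The one minor thing to note is that $\gamma_\kappa$ is only defined in the Cr/r cases, and $\lambda^\kappa$ only in the Ci/i cases, which is compatible with the defect conditions in (c), (c$'$).

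The only real obstacle is the bookkeeping of cases in Theorem \ref{t:T_kappa}: one must verify that the four subcases (2), (3a), (3b), (4a), (4b) each exhibit only terms of the two types above, so that the classification is exhaustive. Since every $\delta$- or $\phi$-sum is indexed by $\gamma<\delta<\lambda$ (or by a longer chain), and the only "non-chain" terms are precisely the two exceptional corrections singled out in (c) and (c$'$), the case analysis is clean and requires no further ingredient.
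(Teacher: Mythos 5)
Your proposal is correct and follows essentially the same route as the paper's own proof: read off from Theorem \ref{t:T_kappa} that the only terms not already supported on $\gamma<\lambda$ are the two ``exceptional'' corrections, and invoke Lemma \ref{l:casesofd} to translate their occurrence into the defect conditions of (c) and (c$'$). The one minor difference is bookkeeping: you establish up front that $\Mu_{-j}(\alpha,\beta)\ne 0$ forces $\alpha<\beta$ strictly (via $\P(\alpha,\alpha)=1$), whereas the paper instead uses the observation that $\gamma\ne\lambda$ because $\kappa\in\tau(\gamma)$ but $\kappa\not\in\tau(\lambda)$; both are valid ways to rule out equality.
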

See Definition \ref{d:defect} for  $\defect_\gamma(\kappa)$.
Compare \cite[Lemma 6.7]{voganparkcity}, and \cite[Section 3,II]{implementation}.

\begin{remarkplain}
In the classical case either $\kappaarrow\gamma\lambda$, or
$m_\kappa(\gamma,\lambda)=\mu(\gamma,\lambda)$ (the top degree term of
$P(\gamma,\lambda)$), which is nonzero only if $\gamma<\lambda$. 
So cases (c), (c$'$) don't occur. Since they allow
$m_\kappa(\gamma,\lambda)\ne 0$ for some $\gamma\not<\lambda$, these
cause some trouble.  
\end{remarkplain}

\begin{proof}
Consulting the cases in the Theorem, if  $m_\kappa(\gamma,\lambda)\ne 0$ 
then either:
\begin{enumerate}
\item $\kappaarrow\gamma\lambda$ (Case (1) of the Theorem)
\item Some $\Mu_{-k}(\gamma,\delta)\ne0$ with $\delta\le\lambda$. This implies $\gamma<\lambda$ ($\gamma\ne\lambda$ since they have opposite $\tau$-invariants).
\item One of the terms in braces in Cases (3b) or (4b) is nonzero.
\end{enumerate}
The cases {\tt 2Cr, 3Cr, 3r, 2Ci, 3Ci, 3i} are exactly the ones in which the defect is $1$,
and since 
$\Mu_{-1}(\gamma,\lambda^\kappa)\ne0$ or 
$\Mu_{-1}(\gamma_\kappa,\lambda)\ne0$  this gives the result.
\end{proof}

\begin{definition}
\label{d:kappaless}
Suppose $\kappa\in\tau(\gamma),\kappa\not\in\tau(\lambda)$. 
We say $\gamma\kappaless\lambda$ if one of conditions (b,c,c$'$)  of the Lemma hold:
\begin{enumerate}
\item[(b)] $\gamma<\lambda$
\item[(c)] $\defect_\gamma(\kappa)=1$, $\gamma\not<\lambda$, and
  $\Mu_{-1}(\gamma_\kappa,\lambda)\ne 0$.
\item[(c$'$)] $\defect_\lambda(\kappa)=1$, $\gamma\not<\lambda$, and
  $\Mu_{-1}(\gamma,\lambda^\kappa)\ne 0$.
\end{enumerate}
\end{definition}
Thus \eqref{e:sumtau} becomes
\begin{equation}
\label{e:sumtau1}
\T_\kappa\Chat_\lambda=
\sum_{\substack{\gamma|\kappa\in\tau(\gamma)\\\kappaarrow\gamma\lambda}}
m_\kappa(\gamma,\lambda)\Chat_\gamma+
\sum_{\substack{\gamma|\kappa\in\tau(\gamma)\\\gamma\kappaless\lambda}}
m_\kappa(\gamma,\lambda)\Chat_\gamma
\end{equation}

We want to replace (b),(c),(c$'$) with (weaker) conditions in terms of
length.
Obviously (b) implies $\ell(\gamma)<\ell(\lambda)$. Suppose
$\ell(\gamma)\ge\ell(\lambda)$,
and (c) or (c$'$) holds. This is quite rare.

Consider Case (c).
We're assuming $\ell(\gamma_\kappa)<\ell(\lambda)\le\ell(\gamma)$.
It is hard to satisfy this. 
Subtract $\ell(\gamma_\kappa)$ from each term,
and use 
$\ell(\gamma_\kappa)=\ell(\gamma)-\ell(\kappa)+1$, to see 
$$
0<\ell(\lambda)-\ell(\gamma_\kappa)\le\ell(\kappa)-1\in\{1,2\}
$$
But $\Mu_{-1}(\gamma_\kappa,\lambda)\ne 0$ implies
$\ell(\lambda)-\ell(\gamma_\kappa)$ is odd, so it equals $1$, and 
$$
\ell(\gamma)=\ell(\lambda)+\ell(\kappa)-2,\, \ell(\gamma_\kappa)=\ell(\lambda)-1.
$$
Case (c$'$) is similar: $t_\lambda(\kappa)=${\tt 2Cr,3Cr,3r},
$\ell(\gamma)=\ell(\lambda)+\ell(\kappa)-2$, and 
$\ell(\lambda^\kappa)=\ell(\gamma)+1$.

These are  illustrated by the following pictures.
An arrow with a  label: $\alpha\underset{k}\rightarrow\beta$ indicates $k=\ell(\alpha)-\ell(\beta)$.
$$
\xymatrix{
&\gamma\ar[rd]^{\ell(\kappa)-1-j}\ar[d]_{\ell(\kappa)-1}^\kappa&&&
\gamma\ar[rd]_{\ell(\kappa)-1-j}&\lambda^\kappa\ar[l]_{<}^{j}\ar[d]^{\ell(\kappa)-1}_\kappa\\
&\gamma_\kappa&\lambda\ar[l]^{<}_j&&&\lambda
}
$$
The preceding argument shows that $j=1$ in both cases.

This gives a nonvanishing criterion in terms of length.

\begin{lemma}
\label{l:kappalesslength}
Assume $\kappa\in\tau(\gamma),\kappa\not\in\tau(\lambda)$, and
$\gamma\kappaless\lambda$. Then
one of the following  conditions  holds:
\begin{enumerate}
\item[(b)] $\ell(\gamma)<\ell(\lambda)$
\item[(c)]  $\ell(\gamma)=\ell(\lambda)+\ell(\kappa)-2$, $\defect_\gamma(\kappa)=1$
\item[(c$'$)] $\ell(\gamma)=\ell(\lambda)+\ell(\kappa)-2$, $\defect_\lambda(\kappa)=1$
\end{enumerate}
\end{lemma}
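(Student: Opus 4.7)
The plan is to regard the lemma as a length-only weakening of Definition \ref{d:kappaless} and to treat the three cases (b), (c), (c$'$) of that definition in turn. Case (b) — $\gamma<\lambda$ — implies $\ell(\gamma)<\ell(\lambda)$, which is case (b) of the lemma, so no work is needed. For cases (c) and (c$'$) I may additionally assume $\gamma\not<\lambda$; within each I split on whether $\ell(\gamma)<\ell(\lambda)$ (already case (b) of the lemma) or $\ell(\gamma)\ge\ell(\lambda)$. Only the latter sub-case requires actual analysis.

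For case (c), starting from $\defect_\gamma(\kappa)=1$ and $\Mu_{-1}(\gamma_\kappa,\lambda)\ne 0$, I would first deduce $\gamma_\kappa<\lambda$: by conditions (2) and (3) of Theorem \ref{t:Chatdelta}, nonvanishing of the $v^{-1}$-coefficient of $\P(\gamma_\kappa,\lambda)$ forces $\gamma_\kappa\le\lambda$ and rules out equality (since $\P(\mu,\mu)=1$ has no $v^{-1}$ term). Hence $\ell(\gamma_\kappa)<\ell(\lambda)$. Combining this with the assumption $\ell(\gamma)\ge\ell(\lambda)$ and the identity $\ell(\gamma_\kappa)=\ell(\gamma)-\ell(\kappa)+1$ (coming from $\defect_\gamma(\kappa)=1$) yields
\[
0<\ell(\lambda)-\ell(\gamma_\kappa)\le\ell(\kappa)-1.
\]
Lemma \ref{l:casesofd} says $\defect=1$ only for $\ell(\kappa)\in\{2,3\}$, so the right side is at most $2$. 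The parity constraint \eqref{e:lambdazero} requires $\ell(\lambda)-\ell(\gamma_\kappa)$ to be odd, pinning it down to $1$. Substituting back gives $\ell(\gamma)=\ell(\lambda)+\ell(\kappa)-2$, matching case (c) of the lemma.

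Case (c$'$) I would handle by symmetry: swap $(\gamma,\gamma_\kappa)$ with $(\lambda^\kappa,\lambda)$. From $\defect_\lambda(\kappa)=1$ I have $\ell(\lambda^\kappa)=\ell(\lambda)+\ell(\kappa)-1$, and $\Mu_{-1}(\gamma,\lambda^\kappa)\ne 0$ gives $\gamma<\lambda^\kappa$ with $\ell(\lambda^\kappa)-\ell(\gamma)$ odd. Combined with $\ell(\gamma)\ge\ell(\lambda)$, the same chain of inequalities forces $\ell(\lambda^\kappa)-\ell(\gamma)\in\{1,2\}$, hence $=1$, giving $\ell(\gamma)=\ell(\lambda)+\ell(\kappa)-2$ once more.

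The whole argument is bookkeeping; no step presents a real obstacle. The only point requiring minor care is the use of $\defect_\lambda(\kappa)$ in case (c$'$): since $\kappa\not\in\tau(\lambda)$, Definition \ref{d:defect} strictly applies to the Cayley partner $\lambda^\kappa$ (where $\kappa$ is a descent of type {\tt 2Cr}, {\tt 3Cr}, or {\tt 3r}), and one must interpret $\defect_\lambda(\kappa)$ as $\defect_{\lambda^\kappa}(\kappa)$, available because $\kappaarrow{\lambda^\kappa}\lambda$ in precisely the defect-one ascent types {\tt 2Ci}, {\tt 3Ci}, {\tt 3i}.
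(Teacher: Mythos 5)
Your proof is correct and follows essentially the same route as the paper's: reduce (b) trivially, assume $\ell(\gamma)\ge\ell(\lambda)$ for (c) and (c$'$), use the defect identity $\ell(\gamma_\kappa)=\ell(\gamma)-\ell(\kappa)+1$ (resp.\ $\ell(\lambda^\kappa)=\ell(\lambda)+\ell(\kappa)-1$) to sandwich the length gap in $\{1,\dots,\ell(\kappa)-1\}$, then invoke the parity constraint \eqref{e:lambdazero} to pin the gap to $1$. You spell out a few steps the paper leaves implicit (the deduction $\gamma_\kappa<\lambda$ from $\Mu_{-1}\ne 0$ via Theorem \ref{t:Chatdelta}(2,3), and the applicability of Lemma \ref{l:casesofd}), but the argument and its structure are the paper's own.
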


\begin{remarkplain}
\label{r:rare}
Explicitly, Cases (c) and (c$'$) of the Lemma are:
\begin{enumerate}
\item $\ell(\kappa)=2$,  $t_\gamma(\kappa)=${\tt 2Cr} or  $t_\lambda(\kappa)=${\tt 2Ci}, and $\ell(\gamma)=\ell(\lambda)$;
\item $\ell(\kappa)=3$,  $t_\gamma(\kappa)=${\tt 3Cr,3r} or  $t_\lambda(\kappa)=${\tt 3Ci,3i}, and $\ell(\gamma)=\ell(\lambda)+1$.
\end{enumerate}

\begin{danger}Cases (a,b,c,c$'$) in Lemmas \ref{l:mnotzero} and \ref{l:kappalesslength} don't precisely line up. 
There can be $\gamma$ in case (c) or (c$'$) of Lemma  \ref{l:mnotzero}, so $\gamma\not<\lambda$, but 
$\ell(\gamma)<\ell(\lambda)$, putting it in case (b) of Lemma \ref{l:kappalesslength}.
\end{danger}

\end{remarkplain}

\sec{Computing $\P(\gamma,\mu)$}

\label{s:recursion}

\subsec{Easy recurrence relations}
\label{s:easy}

Recall \eqref{e:easy}(a) and (b)
$$
\Chat_\mu=\sum_{\gamma|\kappa\in\tau(\gamma)}\P(\gamma,\mu)\a_\gamma+
\sum_{\gamma|\kappa\not\in\tau(\gamma)}\P(\gamma,\mu)\a_\gamma.
$$
and
$$
\Chat_\mu=
\sum_{\gamma|\kappa\in\tau(\gamma)}
\P(\gamma,\mu)\a_\gamma+
\sum_{\gamma'|\kappa\not\in\tau(\gamma')}
\big[\sum_{\gamma|\kappaarrow{\gamma}{\gamma'}}
v^{\ell(\gamma')-\ell(\gamma)}\P(\gamma,\mu)
\epsilon(\gamma,\gamma')\big]\a_{\gamma'}.
$$
Equate the coefficients of $\a_\gamma$ ($\kappa\not\in\tau(\gamma)$),
and use \eqref{e:defect}  to conclude the ``easy'' relations:

\begin{lemma}
\label{l:easyrecursion}
Suppose $\kappa\not\in\tau(\gamma),\kappa\in\tau(\mu)$. Then
\begin{equation}
\label{e:easyrecursion}
\boxed{\P(\gamma,\mu)=v^{-\ell(\kappa)+\defect_\gamma(\kappa)}\sum_{\gamma'|\kappaarrow{\gamma'}\gamma}\epsilon(\gamma',\gamma)\P(\gamma',\mu)}
\end{equation}
\end{lemma}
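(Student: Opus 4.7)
The plan is to extract $\P(\gamma,\mu)$ by expressing $\Chat_\mu$ in two different ways in the basis $\{\a_\delta\}$ and comparing coefficients of $\a_\gamma$, exploiting the hypothesis $\kappa\in\tau(\mu)$.

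First, the defining expansion \eqref{e:P} gives $\Chat_\mu=\sum_\delta\P(\delta,\mu)\a_\delta$, so the coefficient of $\a_\gamma$ on the right is simply $\P(\gamma,\mu)$, the quantity we are after.

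Second, because $\kappa\in\tau(\mu)$, Lemma \ref{l:basis}(1) provides the alternate expansion $\Chat_\mu=\sum_{\delta\mid\kappa\in\tau(\delta)}\P(\delta,\mu)\a^\kappa_\delta$, indexed only over descents, with the same polynomial coefficients. I would substitute the explicit formula \eqref{e:akappalambda} for each $\a^\kappa_\delta$ and collect the coefficient of $\a_\gamma$. Since $\kappa\not\in\tau(\gamma)$ whereas every outer summand $\delta$ satisfies $\kappa\in\tau(\delta)$, the ``diagonal'' piece $\a_\delta$ of $\a^\kappa_\delta$ cannot equal $\a_\gamma$; only the Cayley terms with $\delta'=\gamma$ and $\kappaarrow{\delta}{\gamma}$ contribute, each with weight $\P(\delta,\mu)\,v^{\ell(\gamma)-\ell(\delta)}\epsilon(\delta,\gamma)$. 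Equating the two expressions yields
\[
\P(\gamma,\mu)=\sum_{\delta\mid\kappaarrow{\delta}{\gamma}}\P(\delta,\mu)\,v^{\ell(\gamma)-\ell(\delta)}\epsilon(\delta,\gamma).
\]

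Finally, I would simplify the exponent using the defect identity \eqref{e:defect}: whenever $\kappaarrow{\delta}{\gamma}$, $\ell(\gamma)-\ell(\delta)=\defect_\gamma(\kappa)-\ell(\kappa)$. The one point requiring attention, and essentially the only obstacle, is that this exponent must be independent of the choice of $\delta$ in the sum; this is guaranteed by Lemma \ref{l:casesofd}, which exhibits the defect as a function of the type $t_\gamma(\kappa)$ alone (note the listed defect-$1$ ascent types {\tt 2Ci, 3Ci, 3i} pair with the defect-$1$ descent types {\tt 2Cr, 3Cr, 3r}). Pulling $v^{-\ell(\kappa)+\defect_\gamma(\kappa)}$ out of the sum and relabelling $\delta\to\gamma'$ produces the boxed identity \eqref{e:easyrecursion}.
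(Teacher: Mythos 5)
Your proof is correct and follows exactly the paper's own argument: expand $\Chat_\mu$ once directly via \eqref{e:P} and once via Lemma \ref{l:basis}(1) with \eqref{e:akappalambda} substituted, then compare coefficients of $\a_\gamma$ (using that $\kappa\not\in\tau(\gamma)$ to isolate the Cayley contributions) and rewrite the exponent with the defect identity \eqref{e:defect}. Your observation that the exponent is independent of the summand $\delta$ (so the power of $v$ can be pulled out) is a worthwhile point that the paper leaves implicit.
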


We will compute $\P(\gamma,\mu)$ (and $m_\kappa(\gamma,\mu)$)
by induction on length as follows.
To compute $\P(\gamma,\mu)$ we may assume we know:
\begin{equation}
\label{e:inductive}
\begin{aligned}
\P(*,\mu')&\text{ if }\ell(\mu')<\ell(\mu)\\
\P(\gamma',\mu)&\text{ if }\ell(\gamma')>\ell(\gamma).
\end{aligned}
\end{equation}

We know the right hand side of \eqref{e:easyrecursion}  by the inductive assumption. 
If there is only one term on the right hand side $\P(\gamma,\mu)$ is equal (up to a power of $v$) to a polynomial we have already computed. 
Otherwise
 $\P(\gamma,\mu)$ is the sum of two terms.

\begin{definition} 
Suppose $\gamma<\mu$. Then $(\gamma,\mu)$ is:
\begin{itemize}
\item extremal if $\kappa\in\tau(\mu)\Rightarrow\kappa\in\tau(\gamma)$
\item primitive if 
$\kappa\in\tau(\mu)\Rightarrow\kappa\in\tau(\gamma)$ or  $\kappa\not\in\tau(\gamma)$, $|\{\gamma'\mid\kappaarrow{\gamma'}{\gamma}\}|=2$.
\end{itemize}
\end{definition}

I find the converses  more natural:
\begin{definition}
Suppose $\gamma<\mu$. Then $(\gamma,\mu)$ is:  

\begin{itemize}
\item non-extremal if there exists $\kappa\in\tau(\mu),\kappa\not\in\tau(\gamma)$.
\item non-primitive if there exists $\kappa\in\tau(\mu),\kappa\not\in\tau(\gamma)$ and
$|\{\gamma'\mid\kappaarrow{\gamma'}{\gamma}\}|<2$.
\end{itemize}
Explicitly $(\gamma,\mu)$ is:
\begin{itemize}
\item non-primitive if there exists $\kappa\in\tau(\mu),\kappa\not\in\tau(\gamma)$ and
$t_\gamma(\kappa)\ne${\tt 1i2f,2i12}.
\end{itemize}
\end{definition}

Thus {\it extremal} $\subset$ {\it primitive} 
and
{\it non-primitive}  $\subset$  {\it non-extremal}.

\bigskip

If $(\gamma,\mu)$ is non-primitive,  \eqref{e:easyrecursion} writes $P(\gamma,\mu)=v^k P(\gamma',\mu)$. 

\subsec{Direct Recursion Relations}

Recall Proposition \ref{p:basicidentity}:

\begin{subequations}
\renewcommand{\theequation}{\theparentequation)(\alph{equation}}  
\label{e:direct}
\begin{equation}
\sum_{\delta|\kappa\in\tau(\delta)}\P(\gamma,\delta)m_\kappa(\delta,\lambda)=
\text{multiplicity of $\a_\gamma$ in }\T_\kappa(\Chat_\lambda),
\end{equation}
and if $\kappa\in\tau(\gamma)$ the same equality holds with $\a^\kappa_\gamma$ on the right hand side.

By \eqref{e:sumtau1} the left hand side is:

\begin{equation}
\sum_{\substack{\delta|\kappa\in\tau(\delta)\\\kappaarrow\delta\lambda}}
\P(\gamma,\delta)m_\kappa(\delta,\lambda)
+
\sum_{\substack{\delta|\kappa\in\tau(\delta)\\\delta\kappaless\lambda}}
\P(\gamma,\delta)m_\kappa(\delta,\lambda)
\end{equation}
\end{subequations}

We introduce some notation for the final sum.
See \cite[after Lemma 6.7]{voganparkcity}.
\begin{definition}
\label{d:U}
For $\kappa\not\in\tau(\lambda)$ define:
$$
\U(\gamma,\lambda)=\sum_{\substack{\delta|\kappa\in\tau(\delta)\\\delta\kappaless\lambda}}
\P(\gamma,\delta)
m_\kappa(\delta,\lambda).
$$
\end{definition}

By \eqref{e:direct}(a) and (b) we see:
\begin{lemma}
\label{l:murecursion1}
Fix $\gamma,\lambda$, with  $\kappa\not\in\tau(\lambda)$. Then
\begin{equation}
\label{e:recursion1}
\sum_{\delta|\kappaarrow{\delta}\lambda}
\P(\gamma,\delta)m_\kappa(\delta,\lambda)
=[\text{coefficient of $\a_\gamma$ in $\T_\kappa\Chat_\lambda$}]-
\U(\gamma,\lambda).
\end{equation}
If $\kappa\in\tau(\gamma)$ we can replace the term in brackets with
$$
\text{coefficient of $\a^\kappa_\gamma$ in $\T_\kappa\Chat_\lambda$}
$$
\end{lemma}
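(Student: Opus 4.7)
The plan is to simply rearrange the identity from Proposition \ref{p:basicidentity} using the splitting recorded in equation \eqref{e:sumtau1} and the nonvanishing criterion of Lemma \ref{l:mnotzero}.

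First I would invoke Proposition \ref{p:basicidentity}, which gives
\begin{equation*}
\sum_{\delta\mid\kappa\in\tau(\delta)}\P(\gamma,\delta)\,m_\kappa(\delta,\lambda)
\;=\;[\text{coefficient of }\a_\gamma\text{ in }\T_\kappa(\Chat_\lambda)],
\end{equation*}
valid whenever $\kappa\not\in\tau(\lambda)$. Next I would split the left-hand sum according to whether $\kappaarrow\delta\lambda$ holds or not. By Lemma \ref{l:mnotzero}, the only $\delta$ (with $\kappa\in\tau(\delta)$) for which $m_\kappa(\delta,\lambda)\neq 0$ satisfy either $\kappaarrow\delta\lambda$ (case (a) of that Lemma) or one of the conditions (b), (c), (c$'$) that define $\delta\kappaless\lambda$ (Definition \ref{d:kappaless}). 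Thus the sum splits cleanly, with no overlap, as
\begin{equation*}
\sum_{\delta\mid\kappaarrow\delta\lambda}\P(\gamma,\delta)\,m_\kappa(\delta,\lambda)
\;+\;
\sum_{\substack{\delta\mid\kappa\in\tau(\delta)\\ \delta\kappaless\lambda}}\P(\gamma,\delta)\,m_\kappa(\delta,\lambda).
\end{equation*}
The second sum is, by Definition \ref{d:U}, exactly $\U(\gamma,\lambda)$. Transposing it gives the displayed formula \eqref{e:recursion1}.

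For the final assertion, I would repeat the argument using the second clause of Proposition \ref{p:basicidentity}: when $\kappa\in\tau(\gamma)$, the same identity holds with $\a^\kappa_\gamma$ in place of $\a_\gamma$ on the right, and the rearrangement of the left-hand side is unchanged. There is essentially no obstacle here — the content of the lemma is in Proposition \ref{p:basicidentity}, Lemma \ref{l:mnotzero}, and Definition \ref{d:U}, all of which are in hand. The only thing to double-check is that cases (a) and (b,c,c$'$) of Lemma \ref{l:mnotzero} are mutually exclusive, i.e.\ that no $\delta$ with $\kappaarrow\delta\lambda$ simultaneously satisfies $\delta\kappaless\lambda$; this is immediate from the definitions since $\kappaarrow\delta\lambda$ is one of the defining hypotheses of the Lemma, whereas $\delta\kappaless\lambda$ explicitly requires conditions (b), (c), or (c$'$).
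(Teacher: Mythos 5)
Your proof is correct and is essentially the paper's argument: the paper deduces the lemma directly from Proposition \ref{p:basicidentity}, the decomposition \eqref{e:sumtau1}, and Definition \ref{d:U}, exactly as you do. One remark on your final "double-check," though. You assert that cases (a) and (b,c,c$'$) of Lemma \ref{l:mnotzero} are mutually exclusive \emph{because} they are listed as distinct alternatives; but a statement of the form "one of the following holds" does not by itself preclude two of them holding simultaneously, so this reasoning is not quite a proof. The disjointness is nonetheless true, for a concrete reason: if $\kappaarrow\delta\lambda$ with $\kappa\in\tau(\delta)$, then $\lambda$ is a descent of $\delta$, so $\ell(\lambda)<\ell(\delta)$, which rules out condition (b) ($\delta<\lambda$). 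For (c), note $\defect_\delta(\kappa)=1$ forces $t_\delta(\kappa)\in\{{\tt 2Cr,3Cr,3r}\}$, where $\delta_\kappa$ is single-valued; then $\kappaarrow\delta\lambda$ gives $\delta_\kappa=\lambda$, so $\Mu_{-1}(\delta_\kappa,\lambda)=\Mu_{-1}(\lambda,\lambda)=0$ since $\P(\lambda,\lambda)=1$. Condition (c$'$) is ruled out symmetrically via $\lambda^\kappa=\delta$. The paper, for what it is worth, also passes over this point silently when it writes \eqref{e:sumtau1} as a disjoint split, so your instinct to flag it was good — only the supplied justification needed tightening.
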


We first dispense with a case which won't be used until Section \ref{s:new}.
In the setting of the Lemma, if $t_\lambda(\kappa)=${\tt 1i2s,1ic,2ic,3ic} then,
even though $\kappa\in\tau(\lambda)$, there are no $\delta$ occuring in the sum on the left hand side.

\begin{lemma}
\label{l:lhsempty}
Assume $t_\lambda(\kappa)=$ {\tt 1i2s,1rn,2rn,3rn}. Equivalently  $\kappa\not\in\tau(\lambda)$
but there does not exist $\delta$ satisfying $\kappaarrow{\delta}{\lambda}$. 
Then
$$
\text{coefficient of $\a_\gamma$ in $\T_\kappa\Chat_\lambda$}=
\U(\gamma,\lambda)
$$
\end{lemma}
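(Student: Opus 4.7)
The plan is to reduce the identity to Lemma \ref{l:murecursion1} and then establish the stated equivalence as a case analysis. Under the hypothesis that no $\delta$ with $\kappaarrow{\delta}{\lambda}$ exists, the left-hand side of \eqref{e:recursion1} is an empty sum equal to zero, and rearranging yields the stated identity. So the entire content reduces to the equivalence between $t_\lambda(\kappa)$ being one of {\tt 1i2s}, {\tt 1rn}, {\tt 2rn}, {\tt 3rn} and the combined condition $\kappa \not\in \tau(\lambda)$, $\{\delta \mid \kappaarrow{\delta}{\lambda}\} = \emptyset$.

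All four of {\tt 1i2s}, {\tt 1rn}, {\tt 2rn}, {\tt 3rn} appear in the ascent column of Table \ref{table:types}, so $\kappa \not\in \tau(\lambda)$ is automatic in each case; the nontrivial content is the nonexistence of a source. By Definition \ref{d:kappaarrow}, such a $\delta$ would be a $\sigma$-fixed descent type for $\kappa$ with $a_\lambda$ appearing in $T_{w_\kappa}(a_\delta)$. Consulting the descent formulas of Section \ref{s:formulas}, the only parameters $\mu \ne \delta$ appearing in $T_{w_\kappa}(a_\delta)$ are cross-action images $w_\kappa \times \delta$ (arising for $t_\delta(\kappa) \in \{${\tt 1C-}, {\tt 1r2}, {\tt 2C-}, {\tt 2r22}, {\tt 3C-}$\}$) and Cayley descent images $\delta_\kappa$ or $\delta_\kappa^i$ (arising in the remaining {\tt r}- and {\tt Cr}-types). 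I would then go through each ascent type in Table \ref{table:types}: for every ascent type not on our list, an explicit source $\delta$ is produced by inverting the appropriate descent operation (e.g. {\tt 1C+} from {\tt 1C-} via cross action, {\tt 1i1} from {\tt 1r2} via Cayley descent, and so on in lengths $2$ and $3$); for the four types on our list, every candidate source is excluded.

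The main obstacle is the {\tt 1i2s} case: an apparent candidate source would be $\delta$ of type {\tt 1r2} or {\tt 1r1f} whose Cayley descent is $\lambda$. The resolution uses the very definition of {\tt 1i2s}, namely that $\sigma$ interchanges the two Cayley ascents of $\lambda$; neither ascent lies in $\D^\sigma$, so $\lambda$ cannot be the Cayley descent of any $\sigma$-fixed real parameter. For the non-parity types {\tt 1rn}, {\tt 2rn}, {\tt 3rn}, one verifies directly from the descent formulas and the type classifications that the non-parity condition on the root(s) for $\lambda$ is incompatible with being produced by any cross action from a parity source or by any Cayley descent from a complex source; in each such case the image type predicted by inverting the formula in Section \ref{s:formulas} is never a non-parity type, completing the equivalence and hence the lemma.
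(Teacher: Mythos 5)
Your proof is correct and takes essentially the same approach the paper has in mind: the identity is an immediate corollary of Lemma \ref{l:murecursion1}, since the hypothesis makes the sum on the left side of \eqref{e:recursion1} empty; the remaining content is the type-equivalence, which is a routine check of the descent formulas and does match the paper's implicit argument (the paper states the equivalence without proof). One small point worth tightening: your case analysis lumps the {\tt r}/{\tt Cr} Cayley sources together somewhat loosely — the cleaner statement is that a Cayley descent from any real source produces an imaginary type and from a {\tt Cr} source produces a {\tt Ci} type, while cross actions from {\tt C-} or {\tt r2} types preserve complex or real-parity type respectively, so none of these operations can output {\tt 1rn}, {\tt 2rn}, {\tt 3rn}, or {\tt 1i2s} (the last also seen directly because the Cayley ascent of a {\tt 1i2s} parameter lands outside $\D^\sigma$).
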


We turn now to the main case, in which the left hand side of \eqref{e:recursion1} is nonempty. 
In this case this sum has $1$ or $2$
terms. We're mainly interested when it has $1$ term, in which case it
gives a formula for $\P(\gamma,\mu)$. For this reason it is
convenient to change variables. This gives the main result.

\begin{proposition}
\label{p:recursion}
Suppose $\kappa\in\tau(\gamma)$, $\kappa\in\tau(\mu)$, and 
$t_\mu(\kappa)\ne${\tt 1r1s, 1ic, 2ic, 3ic}. Choose $\lambda$ satisfying
$\kappaarrow{\mu}\lambda$. Then 
\begin{equation}
\label{e:recursion2}
\boxed{\sum_{\mu'|\kappaarrow{\mu'}\lambda}
\P(\gamma,\mu')m_\kappa(\mu',\lambda)
=[\text{coefficient of $\a^\kappa_\gamma$ in $\T_\kappa\Chat_\lambda$}]-
\U(\gamma,\lambda)}
\end{equation}
The sum on the left hand side is over $\{\mu,w_\kappa\times\mu\}$ if
$t_\mu(\kappa)=${\tt 1r2, 2r22, 2r21},
and just $\mu$ otherwise.
\end{proposition}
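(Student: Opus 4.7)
The plan is to derive this as a reparameterization of Lemma \ref{l:murecursion1}. By hypothesis $\kappaarrow{\mu}{\lambda}$, so Definition \ref{d:kappaarrow} gives $\kappa\not\in\tau(\lambda)$. Therefore Lemma \ref{l:murecursion1} applies to the pair $(\gamma,\lambda)$, and since additionally $\kappa\in\tau(\gamma)$, the right-hand side equals the coefficient of $\a^\kappa_\gamma$ in $\T_\kappa\Chat_\lambda$ minus $\U(\gamma,\lambda)$. This yields
$$\sum_{\delta\mid\kappaarrow{\delta}{\lambda}}\P(\gamma,\delta)\,m_\kappa(\delta,\lambda)=[\text{coefficient of }\a^\kappa_\gamma\text{ in }\T_\kappa\Chat_\lambda]-\U(\gamma,\lambda),$$
which has the same shape as \eqref{e:recursion2}. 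So the content of the proposition is an identification of indexing sets: namely, that
$$\{\delta\mid\kappaarrow{\delta}{\lambda}\}=\begin{cases}\{\mu,w_\kappa\times\mu\}&t_\mu(\kappa)\in\{{\tt 1r2},{\tt 2r22},{\tt 2r21}\}\\ \{\mu\}&\text{otherwise.}\end{cases}$$

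The proof of this set equality is a case analysis driven by the type $t_\mu(\kappa)$, using the formula tables from Section \ref{s:formulas}. The type of $\lambda$ is forced once $t_\mu(\kappa)$ is fixed (e.g.\ $t_\mu(\kappa)={\tt 1C-}$ forces $t_\lambda(\kappa)={\tt 1C+}$; $t_\mu(\kappa)={\tt 1r2}$ forces $t_\lambda(\kappa)={\tt 1i1}$; $t_\mu(\kappa)={\tt 2r21}$ forces $t_\lambda(\kappa)={\tt 2i12}$; and so on). For each such $\lambda$, I would enumerate those $\delta$ with $\kappa\in\tau(\delta)$ whose $T_{w_\kappa}$-image contains $\a_\lambda$, by inspecting the ten-plus-twelve-plus-eight type formulas of Section \ref{s:formulas}. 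In the complex descent cases ({\tt 1C-, 2C-, 3C-}), only $\mu$ itself has $\lambda=w_\kappa\times\mu$ in its image. In the ``swap'' real cases ({\tt 1r2, 2r22, 2r21}), the fact that the Cayley transform down is single-valued but fixed by neither cross action means both $\mu$ and $w_\kappa\times\mu$ hit $\lambda$. In all other descent types ({\tt 1r1f, 2Cr, 2r11, 2i11\nobreakdash-dual, 2i22\nobreakdash-dual, 3Cr, 3r}, etc.) one has $w_\kappa\times\mu=\mu$, so the set is just $\{\mu\}$.

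The excluded types $\{{\tt 1r1s},{\tt 1ic},{\tt 2ic},{\tt 3ic}\}$ must be omitted because for these $T_{w_\kappa}(\a_\mu)$ is a scalar multiple of $\a_\mu$, and therefore no $\lambda$ satisfies $\kappaarrow{\mu}{\lambda}$ at all; the very choice of $\lambda$ in the statement would be impossible. So the hypothesis is not merely technical but necessary for the proposition to make sense.

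The main obstacle, I expect, is the {\tt 2r21} case. There the Cayley transform up to {\tt 2i12} produces an ordered pair $(\lambda_1,\lambda_2)$ whose labelling depends on Section \ref{s:2i12} (and ultimately on Conjecture \ref{c:distinguish}), and the signs $\epsilon(\mu,\lambda_i)$ enter. I would need to verify that for a fixed $\lambda=\lambda_i$ the elements $\delta$ with $\kappaarrow{\delta}{\lambda}$ are exactly $\mu$ and $w_\kappa\times\mu$ (the other member of the {\tt 2r21} ordered pair at the source), and that the $\epsilon$ signs absorbed into $m_\kappa(\mu',\lambda)$ by Theorem \ref{t:T_kappa}(1) reproduce the expansion \eqref{e:direct}(b) faithfully. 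Once that bookkeeping is settled, the identity \eqref{e:recursion2} follows immediately from Lemma \ref{l:murecursion1}.
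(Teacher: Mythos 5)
Your reduction is exactly the paper's intended route: the proposition is Lemma \ref{l:murecursion1} applied to the pair $(\gamma,\lambda)$ (valid because $\kappaarrow\mu\lambda$ forces $\kappa\not\in\tau(\lambda)$), with the bracket upgraded to the $\a^\kappa_\gamma$-form by the hypothesis $\kappa\in\tau(\gamma)$, followed by an identification of the index set $\{\mu'\mid\kappaarrow{\mu'}\lambda\}$; the paper simply calls this a ``change of variables'' and leaves the enumeration to the reader, so your case analysis is the right content and the excluded types are correctly diagnosed as those for which no $\lambda$ with $\kappaarrow\mu\lambda$ exists.

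One factual slip worth fixing: you group {\tt 2r21} with {\tt 1r2, 2r22} as ``Cayley transform down is single-valued'' with the second element being $w_\kappa\times\mu$. In fact for {\tt 2r21} the downward Cayley is \emph{double}-valued, and $w_\kappa\times\mu=\mu$ (the table in Section \ref{s:TChatintermsofa} records $w_\kappa\times\gamma\ne\gamma$ only for {\tt 1r2, 2r22, 1C-, 2C-, 3C-}). The second element of $\{\mu'\mid\kappaarrow{\mu'}\lambda\}$ in the {\tt 2r21} case is the other member of the ordered pair, namely $s_\alpha\times\mu=s_\beta\times\mu$, not $w_\kappa\times\mu$. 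You implicitly recognize this in your final paragraph (``the other member of the {\tt 2r21} ordered pair at the source''), and the proposition's own phrasing ``$\{\mu,w_\kappa\times\mu\}$'' is imprecise in exactly the same way, so this does not break the argument — but your stated case justification for {\tt 2r21} should be rewritten to invoke the pair $(\mu_1,\mu_2)$ related by a single cross action, rather than the (false for this type) claims of single-valuedness and $w_\kappa\times\mu\ne\mu$.
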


This is our main recursion relation. We analyse its effectiveness in
the next section.

Since we are assuming $\kappa\in\tau(\gamma)$ we have replaced $\a_\gamma$ with $\a^\kappa_\gamma$ as in Lemma \ref{l:murecursion1}. 

Note that the term $m_\kappa(\mu,\lambda)$ is computed by the first case of Theorem \ref{t:T_kappa},
i.e. $\kappaarrow\mu\lambda$, 
and either equals $\epsilon(\mu,\lambda)=\pm1$ (see Definition \ref{d:epsilon}) 
or $(v+v\inv)$.

Consulting Theorem \ref{t:T_kappa} we make the left  side of the Proposition explicit.

\bigskip
\begin{mytable}
\hfil\break\newline
\begin{tabular}{|l|l|}
  \hline
$t_\mu(\kappa)$&LHS of \eqref{e:recursion2}\\\hline
{\tt 1C-,1r1f,2C-,2r11,3C-}&$\P(\gamma,\mu)$\\\hline
{\tt 1r2,2r22}&$\P(\gamma,\mu)+\P(\gamma,w_\kappa\times\mu)$\\\hline
{\tt 2r21}&$\displaystyle\sum_{\mu'|\kappaarrow{\mu}{\mu'}}\epsilon(\mu,\mu')\P(\gamma,\mu')$\\\hline
{\tt 2Cr,3Cr,3r}&$(v+v\inv)\P(\gamma,\mu)$\\\hline
\end{tabular}
\end{mytable}
\medskip

In the {\tt 2r21} case, recall $\mu$ comes in an ordered pair $(\mu_1,\mu_2)$.
Then $\kappaarrow{\mu}{\lambda}$ says that 
$\lambda$ is one member of the ordered pair
$\mu_\kappa=(\lambda_1,\lambda_2)$.

We turn to the right hand side of \eqref{e:recursion2}. The first term  is given by Lemma \ref{l:coefficient}.
Let $k=\ell(\kappa)$. This is copied from Table \ref{table:akappagammainTClambda2},
which is a condensed version of Table  \ref{table:akappagammainTClambda}.
\begin{mytable}
\label{table:condensed2}
\hfil\break\newline
\begin{tabular}{|l|l|l|}
\hline
$t_\gamma(\kappa)$ & first term on RHS of \eqref{e:recursion2}\\
\hline
{\tt 1C-,2C-,3C-}&$v^k\P(\gamma,\lambda)+\P(w_\kappa\times\gamma,\lambda)$\\
\hline
{\tt 1ic,2ic,3ic}&$(v^k+v^{-k})\P(\gamma,\lambda)$\\
\hline
{\tt 2Cr,3Cr,3r}&$v(v^{k-1}-v^{-k+1})\P(\gamma,\lambda)+(v+v\inv)\P(\gamma_\kappa,\lambda)$\\
\hline
{\tt1r1f,2r11}&$(v^k-v^{-k})\P(\gamma,\lambda)+\P(\gamma_\kappa^1,\lambda)+\P(\gamma_\kappa^2,\lambda)$\\
\hline
{\tt1r1s}&$(v-v\inv)\P(\gamma,\lambda)$\\
\hline
{\tt1r2,2r22}&$v^k\P(\gamma,\lambda)-v^{-k}\P(w_\kappa\times\gamma,\lambda)+\P(\gamma_\kappa,\lambda)$\\
\hline
{\tt2r21}&$(v^2-v^{-2})\P(\gamma,\lambda)$+$\displaystyle\sum_{\gamma'|\kappaarrow\gamma{\gamma'}}
\epsilon(\gamma,\gamma')\P(\gamma',\lambda)$\\
\hline
\end{tabular}
\end{mytable}

\subsec{Analysis of the Recursion}
\label{s:analysis}
Fix $\gamma,\mu$ with $\kappa\in\tau(\gamma)$ and 
$t_\mu(\kappa)=${\tt 1C-, 1r1f, 2C-, 2r11, 3C-, 2Cr, 3Cr, 3r}. 
Choose $\lambda$ with $\kappaarrow\mu\lambda$.
Then Proposition \ref{p:recursion} says
\begin{equation}
\label{e:pgammamu}
\P(\gamma,\mu)=\text{coefficient of $\a^\kappa_\gamma$ in $\T_\kappa\Chat_\lambda$}-\U(\gamma,\lambda).
\end{equation}
Recall $\U(\gamma,\lambda)$ is given in Definition \ref{d:U}.
We analyse how this fits in our recursive scheme.


\begin{lemma}
\label{l:U}
Fix $\kappa$. For all $\gamma,\lambda$ with $\kappa\in\tau(\gamma)$,  $\kappa\not\in\tau(\lambda)$:

$$
\begin{aligned}
\U(\gamma,\lambda)=
&\sum_{\substack{\delta|\kappa\in\tau(\delta)\\\ell(\gamma)\le\ell(\delta)<\ell(\lambda)}}
\P(\gamma,\delta)m_\kappa(\delta,\lambda)+\\
&\sum_{\substack{\delta|t_\delta(\kappa)={\tt 2Cr,3Cr,3r}\\\ell(\delta)=\ell(\lambda)+\ell(\kappa)-2}}
\P(\gamma,\delta)\Mu_{-1}(\delta_\kappa,\lambda)+\\
\defect_\lambda(\kappa)\hskip-15pt
&\sum_{\substack{\delta|\kappa\in\tau(\delta)\\\ell(\delta)=\ell(\lambda)+\ell(\kappa)-2}}
\P(\gamma,\delta)\Mu_{-1}(\delta,\lambda^\kappa)\\
\end{aligned}
$$
\end{lemma}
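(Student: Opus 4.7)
The plan is to expand the definition of $\U(\gamma,\lambda)$ (Definition \ref{d:U}) and sort the resulting sum by the length of the summation index $\delta$ relative to $\ell(\lambda)$. First I split $\{\delta\mid\kappa\in\tau(\delta),\ \delta\kappaless\lambda\}$ into the subset with $\ell(\delta)<\ell(\lambda)$ and the complementary subset with $\ell(\delta)\ge\ell(\lambda)$. Since $\P(\gamma,\delta)$ vanishes unless $\gamma\le\delta$, the nonzero contributions of the low-length subset force $\ell(\gamma)\le\ell(\delta)<\ell(\lambda)$, reproducing the first sum in the statement verbatim; no simplification of $m_\kappa(\delta,\lambda)$ is required for this piece.

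Next I analyze the high-length subset. By Lemma \ref{l:kappalesslength} and Remark \ref{r:rare}, any $\delta\kappaless\lambda$ with $\ell(\delta)\ge\ell(\lambda)$ must satisfy $\ell(\delta)=\ell(\lambda)+\ell(\kappa)-2$ and lie in case (c) or (c$'$) of Definition \ref{d:kappaless}; in particular $\ell(\kappa)\ge 2$, $\delta\not<\lambda$, and either $\defect_\delta(\kappa)=1$ or $\defect_\lambda(\kappa)=1$. For such $\delta$ I invoke Theorem \ref{t:T_kappa} to expand $m_\kappa(\delta,\lambda)$ and observe that most terms vanish: every $\Mu_{-k}(\delta,\lambda)$ is zero because $\delta\not\le\lambda$, and every product $\Mu_{-1}(\delta,\phi)\Mu_{-1}(\phi,\lambda)$ (and the triple product in the length~$3$ case) is zero because its index set demands $\delta<\phi<\lambda$, incompatible with $\delta\not<\lambda$ by transitivity of the partial order.

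The only terms of $m_\kappa(\delta,\lambda)$ that survive are the two ``special'' contributions flagged in Theorem \ref{t:T_kappa}: namely $\Mu_{-1}(\delta_\kappa,\lambda)$ when $t_\delta(\kappa)\in\{\text{\tt 2Cr, 3Cr, 3r}\}$, and $\Mu_{-1}(\delta,\lambda^\kappa)$ when $t_\lambda(\kappa)\in\{\text{\tt 2Ci, 3Ci, 3i}\}$. By Lemma \ref{l:casesofd} these two conditions are precisely $\defect_\delta(\kappa)=1$ and $\defect_\lambda(\kappa)=1$ respectively, so they produce the second sum and the third sum (whose prefactor $\defect_\lambda(\kappa)$ silently annihilates any would-be contribution outside the {\tt 2Ci}, {\tt 3Ci}, {\tt 3i} branches).

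The main obstacle is pedestrian bookkeeping. I need to verify that the length-parity constraints required by the branches of Theorem \ref{t:T_kappa} are automatically met when $\ell(\delta)=\ell(\lambda)+\ell(\kappa)-2$; to reconcile signs on the $\Mu_{-1}(\delta,\lambda^\kappa)$ contribution (Theorem \ref{t:T_kappa} records a minus sign in the {\tt 2Ci}, {\tt 3Ci}, {\tt 3i} branches, so either the statement implicitly absorbs that sign or a small correction is needed); and to attend to the warning in Remark \ref{r:rare}, which notes that some $\delta$ in case (c) or (c$'$) of Definition \ref{d:kappaless} with $\ell(\delta)<\ell(\lambda)$ get counted in the low-length subset, but since we retain the full unsimplified $m_\kappa(\delta,\lambda)$ there, no double-counting arises.
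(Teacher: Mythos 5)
Your approach is organized differently from the paper's and is noticeably more careful. The paper's own proof is two sentences: it lists the three clauses of Definition \ref{d:kappaless}, rewritten (after imposing $\gamma\le\delta$ from $\P(\gamma,\delta)\neq 0$) as $\gamma\le\delta<\lambda$, then $\defect_\delta(\kappa)=1,\ \delta\not<\lambda,\ \delta_\kappa<\lambda$, then $\defect_\lambda(\kappa)=1,\ \delta\not<\lambda,\ \delta<\lambda^\kappa$, and asserts without calculation that these terms ``appear'' in the three displayed sums. In particular the paper never verifies that $m_\kappa(\delta,\lambda)$ actually collapses to $\Mu_{-1}(\delta_\kappa,\lambda)$ (resp. $\Mu_{-1}(\delta,\lambda^\kappa)$) for the high-length $\delta$, nor does it handle the length/partial-order mismatch flagged by its own Remark \ref{r:rare}. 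You sort by $\ell(\delta)<\ell(\lambda)$ versus $\ell(\delta)\ge\ell(\lambda)$, which matches the structure of the displayed formula directly, and you actually carry out the reduction of $m_\kappa(\delta,\lambda)$ via Theorem \ref{t:T_kappa}. That is the right move, and your parity check ($\ell(\delta)-\ell(\lambda)=\ell(\kappa)-2$, which is even precisely when $\ell(\kappa)=2$ and odd when $\ell(\kappa)=3$, selecting branch (3)(b), resp. (4)(b)) is correct.

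One small gap in your write-up: the first sum in the statement is over \emph{all} $\delta$ with $\ell(\gamma)\le\ell(\delta)<\ell(\lambda)$ and $\kappa\in\tau(\delta)$, not just those with $\delta\kappaless\lambda$, so it is not literally ``verbatim'' the low-length piece of $\U$. One must note in addition that the extra $\delta$ in that length window but outside $\{\delta\kappaless\lambda\}$ contribute zero, which follows from Lemma \ref{l:mnotzero} together with the fact that $\kappaarrow\delta\lambda$ forces $\ell(\delta)>\ell(\lambda)$.

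More importantly, the sign issue you flag is real. For $\ell(\delta)=\ell(\lambda)+\ell(\kappa)-2$ with $\delta\not<\lambda$, Theorem \ref{t:T_kappa}(3)(b)/(4)(b) gives
$m_\kappa(\delta,\lambda)=\Mu_{-1}(\delta_\kappa,\lambda)$ when $t_\delta(\kappa)\in\{\text{\tt 2Cr, 3Cr, 3r}\}$, \emph{minus} $\Mu_{-1}(\delta,\lambda^\kappa)$ when $\defect_\lambda(\kappa)=1$ (all other terms vanish because $\delta\not<\lambda$ kills $\Mu_{-k}(\delta,\lambda)$ and empties the $\delta<\phi<\lambda$ summations). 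The third sum in the statement of Lemma \ref{l:U} carries a $+$ in front of $\Mu_{-1}(\delta,\lambda^\kappa)$, which disagrees with the minus sign in Theorem \ref{t:T_kappa}. The paper's proof does not engage with this at all, so you should not expect the sign to be ``implicitly absorbed'': as written, the statement of Lemma \ref{l:U} and Theorem \ref{t:T_kappa} are inconsistent, and your instinct to surface the discrepancy rather than smooth it over is sound.
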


\begin{proof}
Recall $\U(\gamma,\lambda)$ is defined by the sum over
$\delta\kappaless\lambda$. 
Of course $\P(\gamma,\delta)\ne 0$ implies
$\gamma\le\delta$. Therefore, by the definition of $\kappaless$,
$\U(\gamma,\lambda)$ is the sum over $\delta$ satisfying
$\kappa\in\tau(\delta)$, and one of:
\begin{enumerate}
\item $\gamma\le\delta<\lambda$
\item $\defect_\delta(\kappa)=1,
  \delta\not<\lambda,\delta_\kappa<\lambda$
\item $\defect_\lambda(\kappa)=1, \delta\not<\lambda,\delta<\lambda^\kappa$
\end{enumerate}
Every such term appears in Lemma \ref{l:U}. Conversely any nonzero term 
in Lemma \eqref{l:U} appears in one of these cases.
\end{proof}

Recall to compute $P(\gamma,\mu)$ we may assume we know:
\begin{subequations}
\renewcommand{\theequation}{\theparentequation)(\alph{equation}}  
\label{e:indhypo}
\begin{align}
\P(*,\mu')&\text{ if }\ell(\mu')<\ell(\mu)\\
\P(\gamma',\mu)&\text{ if }\ell(\gamma')>\ell(\gamma).
\end{align}
\end{subequations}

\begin{lemma}
Suppose $\kappa\in\tau(\gamma),\tau(\mu)$, and $\kappaarrow\mu\lambda$, and assume 
we know the terms \eqref{e:indhypo}(a) and (b). Then 
$$
\U(\gamma,\lambda)=(*)+
\defect_\lambda(\kappa)(-1)^{[\ell(\gamma)<\ell(\lambda)]}
\Mu_{-1}(\gamma,\mu)
$$
where $(*)$ is known.
\end{lemma}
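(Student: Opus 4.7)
The plan is to apply Lemma \ref{l:U} and write
\[
\U(\gamma,\lambda)=S_1+S_2+\defect_\lambda(\kappa)\,S_3,
\]
the three sums being the three lines of that lemma, and then to isolate within each one the contribution that is not yet known under the inductive hypothesis \eqref{e:indhypo}. Throughout I will use the relation $\ell(\mu)=\ell(\lambda)+\ell(\kappa)-\defect_\lambda(\kappa)$, which follows from $\kappaarrow\mu\lambda$ together with Definition \ref{d:defect}.

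First I would dispose of $S_2$: each summand has $\ell(\delta)=\ell(\lambda)+\ell(\kappa)-2<\ell(\mu)$, so $\P(\gamma,\delta)$ is known by \eqref{e:indhypo}(a), and $\Mu_{-1}(\delta_\kappa,\lambda)$ is a coefficient of $\P(\delta_\kappa,\lambda)$, known by (a) for the same reason. Next, for $S_1$ every $\delta$ satisfies $\ell(\gamma)\le\ell(\delta)<\ell(\lambda)<\ell(\mu)$, so $\P(\gamma,\delta)$ is known. Running through the cases of Theorem \ref{t:T_kappa} for $m_\kappa(\delta,\lambda)$, every $\Mu_{-j}$ appearing has second argument of length $\le\ell(\lambda)<\ell(\mu)$ and is therefore covered by (a), \emph{except} for the single terms $-\Mu_{-1}(\delta,\lambda^\kappa)$ in Cases (3b) and (4b). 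Those occur precisely when $t_\lambda(\kappa)\in\{${\tt 2Ci,3Ci,3i}$\}$, i.e.\ when $\defect_\lambda(\kappa)=1$; in that case $\lambda^\kappa=\mu$. Hypothesis \eqref{e:indhypo}(b) handles every such $\delta$ with $\ell(\delta)>\ell(\gamma)$, and for $\ell(\delta)=\ell(\gamma)$ the triangularity $\gamma\le\delta$ forces $\delta=\gamma$, producing the one unknown contribution $-\Mu_{-1}(\gamma,\mu)$. This $\delta=\gamma$ term belongs to $S_1$ iff $\ell(\gamma)<\ell(\lambda)$.

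The analysis of $S_3$ is parallel. It vanishes unless $\defect_\lambda(\kappa)=1$, in which case $\lambda^\kappa=\mu$ and each summand is $\P(\gamma,\delta)\Mu_{-1}(\delta,\mu)$ with $\ell(\delta)=\ell(\mu)-1$. Again (b) handles $\ell(\delta)>\ell(\gamma)$, $\P(\gamma,\delta)=0$ handles $\ell(\delta)<\ell(\gamma)$, and $\delta=\gamma$ produces $+\Mu_{-1}(\gamma,\mu)$; the latter occurs iff $\ell(\gamma)=\ell(\mu)-1=\ell(\lambda)+\ell(\kappa)-2\ge\ell(\lambda)$.

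Putting the two pieces together, the only unknown part of $\U(\gamma,\lambda)$ is a multiple of $\Mu_{-1}(\gamma,\mu)$, carrying the factor $\defect_\lambda(\kappa)$ (so vanishing in the defect-$0$ case), with sign $-1$ when it originates in $S_1$ (i.e.\ $\ell(\gamma)<\ell(\lambda)$) and sign $+1$ when it originates in $S_3$ (i.e.\ $\ell(\gamma)\ge\ell(\lambda)$); this is exactly $\defect_\lambda(\kappa)(-1)^{[\ell(\gamma)<\ell(\lambda)]}\Mu_{-1}(\gamma,\mu)$. The main obstacle is the accurate bookkeeping of Cases (3b), (4b) of Theorem \ref{t:T_kappa}; a useful consistency check is the parity constraint \eqref{e:lambdazero}, which forces $\Mu_{-1}(\gamma,\mu)=0$ in the borderline length-$3$ defect-$1$ situation $\ell(\gamma)=\ell(\lambda)$, so that the $S_1$ versus $S_3$ dichotomy is exhaustive.
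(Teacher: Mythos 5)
Your proof is correct and takes essentially the same route as the paper: decompose $\U(\gamma,\lambda)$ via Lemma \ref{l:U}, show the inductive hypotheses \eqref{e:indhypo} cover everything except the $\delta=\gamma$ contributions to $m_\kappa(\gamma,\lambda)$ (in $S_1$) and to the defect-$1$ correction sum (your $S_3$), and verify the sign bookkeeping via the parity constraint \eqref{e:lambdazero}. The paper is slightly more explicit, separating out the $\delta=\gamma$ term as a bracketed term and then running separate $\ell(\kappa)=2$ and $\ell(\kappa)=3$ cases, whereas you handle both lengths at once and relegate the parity collapse (needed to make the $S_1$/$S_3$ dichotomy exhaustive when $\ell(\kappa)=3$ and $\ell(\gamma)=\ell(\lambda)$) to an explicit but well-placed consistency check.
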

To be explicit: the extra term is $0$ unless 
$\defect_\lambda(\kappa)=1$ in which case it equals:
\begin{subequations}
\renewcommand{\theequation}{\theparentequation)(\alph{equation}}  
\begin{equation}
\begin{cases}
\Mu_{-1}(\gamma,\mu)&\ell(\gamma)=\ell(\mu)-1\\
-\Mu_{-1}(\gamma,\mu)&\ell(\gamma)<\ell(\mu)-1
\end{cases}
\end{equation}
Equivalently if $\defect_\lambda(\kappa)=1$ it is equal to 
\begin{equation}
\begin{cases}
\Mu_{-1}(\gamma,\mu)&\ell(\gamma)=\ell(\mu)-1\\
-\Mu_{-1}(\gamma,\mu)&\ell(\gamma)=\ell(\mu)-3,5,7\dots
\end{cases}
\end{equation}
and is $0$ otherwise.

Note that
$$
\defect_\lambda(\kappa)=1\Leftrightarrow
\defect_\mu(\kappa)=1\Leftrightarrow
t_\lambda(\kappa)={\tt 2Ci,3Ci,3i}\Leftrightarrow
t_\mu(\kappa)={\tt 2Cr,3Cr,3r}. 
$$
\end{subequations}

\begin{proof}
According to Lemma \ref{l:U} $\U(\gamma,\lambda)$ is equal to:
\begin{subequations}
\renewcommand{\theequation}{\theparentequation)(\alph{equation}}  
\label{e:Urecap}
\begin{align}
&[\ell(\gamma)<\ell(\lambda)]m_\kappa(\gamma,\lambda)+\\
&\sum_{\substack{\delta|\kappa\in\tau(\delta)\\\ell(\gamma)<\ell(\delta)<\ell(\lambda)}}
\P(\gamma,\delta)m_\kappa(\delta,\lambda)+\\
&\hskip-10pt\sum_{\substack{\delta|t_\delta(\kappa)={\tt 2Cr,3Cr,3r}\\\ell(\delta)=\ell(\mu)-2+\defect_\lambda(\kappa)}}
\P(\gamma,\delta)\Mu_{-1}(\delta_\kappa,\lambda)+\\
\defect_\lambda(\kappa)\hskip-15pt
&\sum_{\substack{\delta|\kappa\in\tau(\delta)\\\ell(\delta)=\ell(\mu)-1}}
\P(\gamma,\delta)\Mu_{-1}(\delta,\mu)
\end{align}
\end{subequations}
For (a) we have separated out the term $\ell(\gamma)=\ell(\delta)$ from the first sum in Lemma \ref{l:U}.
Since then $\P(\gamma,\delta)=0$ unless $\gamma=\delta$ this gives (a) (the bracket is due to 
the summation condition).
Also, we have replaced $\ell(\lambda)+\ell(\kappa)$ with $\ell(\mu)+\defect_\lambda(\kappa)$
(Definition \ref{d:defect}) and 
have used $\defect_\lambda(\kappa)=1$ in the final summand.

We analyse each of the terms appearing, starting with the $\P$ and $\mu_{-1}$ terms.

\begin{itemize}
\item In (b,c,d) we know $\P(\gamma,\delta)$ since $\ell(\delta)<\ell(\mu)$ (by \eqref{e:indhypo}(a))
\item In (c) we know $\Mu_{-1}(\delta_\kappa,\lambda)$ since
  $\ell(\lambda)<\ell(\mu)$ (\eqref{e:indhypo}(a))
\item In (d) we know $\Mu_{-1}(\delta,\mu)$ if $\ell(\delta)>\ell(\gamma)$ (by \eqref{e:indhypo}(b))
\item In (d) suppose $\ell(\delta)<\ell(\gamma)$. Then $\P(\gamma,\delta)=0$.
\item In (d) suppose $\ell(\delta)=\ell(\gamma)$. Then $\P(\gamma,\delta)=0$ unless $\gamma=\delta$.
\end{itemize}

So the only additional contribution to (d) is
\begin{equation}
\label{e:additional}
\defect_\lambda(\kappa)[\ell(\gamma)=\ell(\mu)-1]\Mu_{-1}(\gamma,\mu)
\end{equation}

Now consider $m_\kappa(\delta,\lambda)$ in (b). By Theorem
\ref{t:T_kappa} this requires various $\mu_{-k}(*,\lambda)$ ($k=1,2$),
and also $\mu_{-k}(*,\lambda')$ with $\ell(\lambda')<\ell(\lambda)$,
all of which we know.  The only potential problems are the terms
$\Mu_{-1}(\delta,\lambda^\kappa)$ in Theorem \ref{t:T_kappa} (3)(a) and (4)(a). But
$\lambda^\kappa=\mu$, and $\ell(\delta)>\ell(\gamma)$ (from the
summation condition in \eqref{e:Urecap}(b)), so we know these terms by
\eqref{e:indhypo}(b).

The only remaining term is $[\ell(\gamma)<\ell(\lambda)]m_\kappa(\gamma,\lambda)$ in \eqref{e:Urecap}(a). This is the
same as the previous case $m_\kappa(\delta,\lambda)$, except for the
very last case: there may be a term $\mu_{-1}(\gamma,\mu)$ from
Theorem \ref{t:T_kappa}(3)(a) or (4)(a). These arise when
$\defect_\lambda(\kappa)=1$ and also certain length conditions hold.
There are two cases.

\bigskip

\noindent $\ell(\kappa)=2$. 
By   \ref{t:T_kappa}(3)(b) we get an extra term $-\Mu_{-1}(\gamma,\mu)$ 
if $\ell(\gamma)=\ell(\lambda)\mypmod 2$ and $\ell(\gamma)<\ell(\lambda)$.
On the other hand
$\ell(\kappa)=2$ implies 
$\ell(\mu)-\ell(\lambda)=1$, so  \eqref{e:additional} 
is nonzero  only if $\ell(\gamma)=\ell(\lambda)$. So the combined extra contribution is
($\defect_\lambda(\kappa)$ times)
$$
\begin{cases}
\Mu_{-1}(\gamma,\mu)&\ell(\gamma)=\ell(\lambda)\\
-\Mu_{-1}(\gamma,\mu)&\ell(\gamma)=\ell(\lambda)-2k\quad(k=1,2,3,\dots)  
\end{cases}
$$
\begin{subequations}
If $\ell(\lambda)-\ell(\gamma)$ is odd then $\ell(\mu)-\ell(\lambda)$ is even, 
so  $\Mu_{-1}(\gamma,\mu)=0$. Therefore this can be written
\begin{equation}
\begin{cases}
\Mu_{-1}(\gamma,\mu)&\ell(\gamma)=\ell(\lambda)\\
-\Mu_{-1}(\gamma,\mu)&\ell(\gamma)<\ell(\lambda).
\end{cases}
\end{equation}

\renewcommand{\theequation}{\theparentequation)(\alph{equation}}  

\bigskip

\noindent $\ell(\kappa)=3$. 
By   \ref{t:T_kappa}(4)(b) we get an extra term if 
$\ell(\lambda)-\ell(\gamma)=1\mypmod2$
and $\ell(\gamma)<\ell(\lambda)$.
On the other hand $\ell(\kappa)=2$ implies $\ell(\mu)-1=\ell(\lambda)+1$,
 so  \eqref{e:additional} 
is nonzero only if $\ell(\gamma)=\ell(\lambda)+1$.
So the extra contribution is ($\defect_\lambda(\kappa)$ times)
\begin{equation}
  \begin{cases}
\Mu_{-1}(\gamma,\mu)&\ell(\gamma)=\ell(\lambda)+1\\    
-\Mu_{-1}(\gamma,\mu)&\ell(\gamma)=\ell(\lambda)+1-2k\quad (k=1,2,\dots)
  \end{cases}
\end{equation}
If $\ell(\lambda)-\ell(\gamma)$ is even then  so is $\ell(\mu)-\ell(\lambda)$
so  $\Mu_{-1}(\gamma,\mu)=0$. Therefore this term is
\begin{equation}
\begin{cases}
\Mu_{-1}(\gamma,\mu)&\ell(\gamma)=\ell(\lambda)+1\\
-\Mu_{-1}(\gamma,\mu)&\ell(\gamma)<\ell(\lambda)
\end{cases}
\end{equation}
We can combine the two cases:
$$
\defect_\lambda(\kappa)(-1)^{[\ell(\gamma)<\ell(\lambda)]}
\Mu_{-1}(\gamma,\mu)
$$
\end{subequations} 
\end{proof}

\begin{proposition}[Direct Recursion]
\label{p:directrecursion}
Suppose $\kappa\in\tau(\gamma)$, $\kappa\in\tau(\mu)$, and 
$t_\mu(\kappa)=${\tt 1C-, 1r1f, 2C-, 2r11, 3C-, 2Cr, 3Cr, 3r}.

\begin{enumerate}
\item[(a)]  
We know all terms necessary to compute $\P(\gamma,\mu)$ 
unless $\defect_\mu(\kappa)=1$ and $\ell(\mu)-\ell(\gamma)$ is odd (and positive). 
\item[(b)]  Suppose $\defect_\mu(\kappa)=1$, i.e. 
$t_\mu(\kappa)=${\tt 2Cr,3Cr,3r}, and $\ell(\mu)-\ell(\gamma)$ is odd.
Then
$$
(v+v\inv)\P(\gamma,\mu)=c\Mu_{-1}(\gamma,\mu)+(*)
$$
where all terms in (*), and $c=1,2$,  are known.  This can be solved for $\P(\gamma,\mu)$.
\end{enumerate}

\end{proposition}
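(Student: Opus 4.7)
The plan is to apply the recursion \eqref{e:recursion2} of Proposition~\ref{p:recursion} with a choice of $\lambda$ satisfying $\kappaarrow\mu\lambda$; such a $\lambda$ exists for every type listed in the hypothesis since none of them lies in $\{$\texttt{1r1s, 1ic, 2ic, 3ic}$\}$.  Consulting the table following Proposition~\ref{p:recursion}, the left-hand side of \eqref{e:recursion2} is $\P(\gamma,\mu)$ in the five types \texttt{1C-, 1r1f, 2C-, 2r11, 3C-}, and $(v+v\inv)\P(\gamma,\mu)$ in the three types \texttt{2Cr, 3Cr, 3r}; these last three are precisely the cases with $\defect_\mu(\kappa)=1$.

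The right-hand side equals $[\text{coefficient of }\a^\kappa_\gamma\text{ in }\T_\kappa\Chat_\lambda]-\U(\gamma,\lambda)$.  By Table~\ref{table:condensed2}, the first summand is a $\Z[v,v\inv]$-combination of values $\P(\gamma',\lambda)$; since $\ell(\lambda)<\ell(\mu)$, every such value is known by the inductive hypothesis \eqref{e:indhypo}.  For the second summand, the Lemma immediately preceding this Proposition gives
$$\U(\gamma,\lambda)=(\text{known})+\defect_\lambda(\kappa)(-1)^{[\ell(\gamma)<\ell(\lambda)]}\Mu_{-1}(\gamma,\mu).$$
In the five cases with $\defect_\mu(\kappa)=0$, the $\Mu_{-1}(\gamma,\mu)$ contribution vanishes, so the entire right-hand side is known and determines $\P(\gamma,\mu)$.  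In the three cases with $\defect_\mu(\kappa)=1$, the extra term equals $\pm\Mu_{-1}(\gamma,\mu)$; but by the parity condition \eqref{e:lambdazero}, $\Mu_{-1}(\gamma,\mu)=0$ whenever $\ell(\mu)-\ell(\gamma)$ is even, so once again everything on the right-hand side is known.  This proves part~(a).

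For part~(b), we are left with $\defect_\mu(\kappa)=1$ and $\ell(\mu)-\ell(\gamma)$ odd, and the recursion reads
$$(v+v\inv)\P(\gamma,\mu)=c\,\Mu_{-1}(\gamma,\mu)+(*),$$
where $(*)$ and the integer $c$ (with $c\in\{1,2\}$ up to sign) are read off from the preceding Lemma and Table~\ref{table:condensed2}.  By \eqref{e:Ppoly} and the parity of $\ell(\mu)-\ell(\gamma)$, one may write $\P(\gamma,\mu)=\sum_{k=1}^{m}\Mu_{-(2k-1)}v^{-(2k-1)}$ with $2m-1=\ell(\mu)-\ell(\gamma)$, so $(v+v\inv)\P(\gamma,\mu)$ is supported only at even powers of $v$ with
$$[v^{-2m}](v+v\inv)\P(\gamma,\mu)=\Mu_{-(2m-1)},$$
$$[v^{-2k}](v+v\inv)\P(\gamma,\mu)=\Mu_{-(2k-1)}+\Mu_{-(2k+1)}\quad(1\le k\le m-1),$$
$$[v^0](v+v\inv)\P(\gamma,\mu)=\Mu_{-1}.$$
Since the contribution $c\,\Mu_{-1}$ lives only in degree $v^0$, the coefficients of $v^{-2m},v^{-(2m-2)},\ldots,v^{-2}$ in $(*)$ determine $\Mu_{-(2m-1)},\Mu_{-(2m-3)},\ldots,\Mu_{-1}$ successively from the top down, and the $v^0$ equation $(1-c)\Mu_{-1}=[v^0](*)$ is then a compatibility relation automatically satisfied.

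The main obstacle is the sign-and-coefficient bookkeeping that identifies $c$: the $\Mu_{-1}(\gamma,\mu)$ terms enter $\U(\gamma,\lambda)$ both through Theorem~\ref{t:T_kappa}(3)(b)/(4)(b) applied to $m_\kappa(\gamma,\lambda)$ (when $\ell(\gamma)<\ell(\lambda)$) and through the diagonal $\delta=\gamma$ piece in part~(d) of Lemma~\ref{l:U} (when $\ell(\gamma)$ is close to $\ell(\mu)$), and one must verify that these combine to give the stated value of $c$ in each of the eight type-cases.  Once this bookkeeping is in hand, the algebraic solvability in part~(b) follows immediately from the top-down coefficient extraction described above.
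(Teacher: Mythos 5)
Your proof is correct and follows the paper's own argument: you apply Proposition~\ref{p:recursion} with a Cayley partner $\lambda$ of $\mu$, invoke the Lemma preceding the statement to isolate the single potentially unknown piece $\pm\Mu_{-1}(\gamma,\mu)$ of $\U(\gamma,\lambda)$, use the parity condition \eqref{e:lambdazero} to dispatch the even case, and in the odd defect-one case recover $\P(\gamma,\mu)$ by peeling coefficients of $(v+v\inv)\P(\gamma,\mu)$ from the lowest power upward — which is exactly the content of Lemma~\ref{l:topterm}, here re-derived inline rather than cited. Your hedge ``$c\in\{1,2\}$ up to sign'' is slightly looser than the paper's assertion $c=1,2$, but it does not affect the solvability argument.
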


\begin{proof}
This follows from the preceding Lemma, with the final  case  provided by the next Lemma.
\end{proof}

\begin{lemma}
\label{l:topterm}
Suppose $f(v)\in v\inv\Z[v^{-2}]$, and we know all terms of 
$f(v)(v+v\inv)$ except the constant term. Then we can compute $f(v)$.
\end{lemma}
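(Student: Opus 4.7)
The plan is to expand everything explicitly and exploit the fact that $f$ has only finitely many nonzero coefficients, so the unknown constant is recovered from ``the top'' by a finite downward induction.

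Write $f(v) = \sum_{k \ge 1} c_k v^{-(2k-1)}$, a finite sum, and let $N$ be the largest index with $c_N \ne 0$ (if $f = 0$ there is nothing to prove). I would first compute
\[
f(v)(v + v^{-1}) = c_1 + \sum_{k \ge 1} (c_k + c_{k+1}) v^{-2k},
\]
reading off that the constant term equals $c_1$, and the coefficient $a_k$ of $v^{-2k}$ (for $k \ge 1$) equals $c_k + c_{k+1}$. By hypothesis all $a_k$ with $k \ge 1$ are known.

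Next I would recover the $c_k$ by downward induction on $k$. Since $f$ is a (finite) polynomial in $v^{-1}$, we have $c_{k+1} = 0$ for all sufficiently large $k$, so the relation $a_k = c_k + c_{k+1}$ gives $c_k = a_k$ once $k$ is large enough. Then, decreasing $k$ one step at a time, $c_k = a_k - c_{k+1}$ determines $c_k$ from quantities already known. This yields all coefficients $c_1, c_2, \dots, c_N$ of $f$, so $f(v)$ itself is computed; as a bonus the missing constant term of $f(v)(v+v^{-1})$ is now known to be $c_1$.

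There is no serious obstacle: finiteness of the support of $f$ is what makes the recursion terminate, and the only thing to be careful about is noting that $f \in v^{-1}\Z[v^{-2}]$ forces $f(v)(v+v^{-1})$ to lie in $\Z[v^{-2}]$, so no odd powers of $v$ appear and the two families of coefficients are not coupled to anything outside the list $\{a_k\}_{k \ge 1}$.
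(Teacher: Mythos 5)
Your proof is correct and takes essentially the same approach as the paper: expand $f(v)(v+v^{-1})$, observe that the known coefficients satisfy the linear recursion $a_k = c_k + c_{k+1}$, and solve from the low-degree end (where $c_{k+1}=0$) upward to recover all the $c_k$. The only difference is cosmetic indexing (the paper numbers coefficients from the bottom-degree term, you number from the top), and the paper also records the closed-form alternating sum $c_k=(-1)^k\sum_{j=0}^k(-1)^jb_j$, which your step-by-step recursion produces implicitly.
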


\begin{proof}
Write $f(v)=c_nv\inv+c_{n-1}v^{-3}+\dots+c_0v^{-(2n+1)}$, and 
$$
(v+v\inv)f(v)=b_{n+1}+b_nv^{-2}+v_{n-1}v^{-4}+\dots+b_0v^{-2n-2}
$$
and we know $b_0,\dots,b_n$.
Starting at $v^{-2n-2}$ we see $c_0=b_0, (c_1+c_0)=b_1, (c_2+c_1)=b_2,
\dots$. This is easy to solve for $c_i$:
$c_0=b_0,c_1=b_0-b_1,c_2=b_2-b_1+b_0,\dots$. 
That is:
$$
c_k=(-1)^{k}\sum_0^k (-1)^jb_j\quad(0\le k\le n).
$$
\end{proof}

\begin{remarkplain}
It might be easier to think about this if we multiply by $v^{2n+2}$ and replace $v^2$ with $q$. 
This gives
$$
(1+q)(c_0+c_1q^2+\dots+c_nq^n)=b_0+b_1q+\dots+b_{n+1}q^{n+1}
$$
If we know all terms on the right hand side except for $b_{n+1}$, we can find all $c_i$.
\end{remarkplain}

\begin{remarkplain}
This computes $\P(\gamma,\mu)$ ($\kappa\in\tau(\gamma),\tau(\mu)$) unless:
\begin{enumerate}
\item there is no $\lambda$ with $\kappaarrow\mu\lambda$:
{\tt 1r1s,1ic,2ic,3ic}
\item $\kappaarrow\mu\lambda$ for some $\lambda$,
but $|\{\mu'|\kappaarrow{\mu'}{\lambda}\}|=2$:
{\tt 1r2,2r22,2r21}.
\end{enumerate}
See Section \ref{s:guide}.
\end{remarkplain}

\sec{New Recursion Relations}
\label{s:new}

We return now to the setting of Lemma \ref{l:murecursion1}, and the case we skipped earlier.

\begin{lemma}
\label{l:lhsempty2}
Assume
$t_\lambda(\kappa)=$ {\tt 1i2s,1rn,2rn,3rn}. Equivalently  $\kappa\not\in\tau(\lambda)$
but there does not exist $\lambda'$ satisfying $\kappaarrow{\lambda'}{\lambda}$. 
Then for any $\gamma$:

\begin{equation}
\label{e:hsempty2}
\sum_\mu\P(\mu,\lambda)(\text{multiplicity of $\a_\gamma$ in $\T_\kappa(\a_\mu)$})=\U(\gamma,\lambda)
\end{equation}
\end{lemma}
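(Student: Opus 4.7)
The plan is to deduce this essentially for free from Lemma \ref{l:lhsempty}, by unfolding $\Chat_\lambda$ in the $\a_\mu$ basis. Under the hypothesis, there is no $\delta$ with $\kappaarrow{\delta}{\lambda}$, so the left-hand sum in Lemma \ref{l:murecursion1} is empty; that recurrence degenerates to
\begin{equation*}
0 = [\text{coefficient of $\a_\gamma$ in $\T_\kappa(\Chat_\lambda)$}] - \U(\gamma,\lambda),
\end{equation*}
which is precisely the statement of Lemma \ref{l:lhsempty}. So I would take that identity as my starting point.

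Next I would compute the coefficient of $\a_\gamma$ in $\T_\kappa(\Chat_\lambda)$ on the other side. Using \eqref{e:P} and the $\Z[v,v\inv]$-linearity of $\T_\kappa$,
\begin{equation*}
\T_\kappa(\Chat_\lambda)=\sum_\mu \P(\mu,\lambda)\,\T_\kappa(\a_\mu),
\end{equation*}
so the coefficient of $\a_\gamma$ on the left is exactly
\begin{equation*}
\sum_\mu \P(\mu,\lambda)\cdot(\text{multiplicity of $\a_\gamma$ in $\T_\kappa(\a_\mu)$}).
\end{equation*}
Equating this with $\U(\gamma,\lambda)$ gives \eqref{e:hsempty2}. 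Thus the only real work is already done in Lemma \ref{l:lhsempty}.

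The mildly nontrivial point, which I would record as a sanity check, is the stated equivalence of the hypothesis: $t_\lambda(\kappa)\in\{\texttt{1i2s},\texttt{1rn},\texttt{2rn},\texttt{3rn}\}$ if and only if $\kappa\notin\tau(\lambda)$ and no $\lambda'$ satisfies $\kappaarrow{\lambda'}\lambda$. This is a case-by-case inspection of the Length~1, Length~2, Length~3 tables in Section \ref{s:cayley}: in each of the listed four types, $\lambda$ fails to appear on the Cayley-transform side of any descent type (either because the noncompact imaginary root is of ``switched'' type, or because the root is real non-parity), while in every other ascent type $\lambda$ does appear as an output of some $\T_\kappa(\a_{\lambda'})$ with $\kappa\in\tau(\lambda')$. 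I don't anticipate any real obstacle here; it is a table lookup.
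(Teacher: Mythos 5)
Your argument is correct and matches the paper's own one-line proof: the paper also simply cites Lemma \ref{l:lhsempty} and implicitly uses the expansion $\T_\kappa(\Chat_\lambda)=\sum_\mu\P(\mu,\lambda)\T_\kappa(\a_\mu)$ to identify the two sides. Your additional observation that Lemma \ref{l:lhsempty} is itself the degenerate case of Lemma \ref{l:murecursion1} with an empty sum, and your table check of the stated equivalence of hypotheses, are accurate and harmless elaborations.
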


This is an immediate consequence of Lemma \ref{l:lhsempty}, which says
that under this assumption the $\text{coefficient of $\a_\gamma$ in
  $\T_\kappa\Chat_\lambda$}=\U(\gamma,\lambda)$.

The left hand side has at most $3$ terms, which can be read off from
the tables in Section \ref{s:formulasfortkappa}, or
Table \ref{table:Tagamma}. One of the terms is a polynomial times
$\P(\gamma,\lambda)$, and we wish to solve for $\P(\gamma,\lambda)$. 

If $\kappa\not\in\tau(\gamma)$ then 
the only possibilities for $\mu$ are  $\mu=\gamma, \mu=w_\kappa\times\gamma$,
or $\kappaarrow\mu\gamma$. These are all well suited to using
induction to computing $\P(\gamma,\lambda)$,
unless $t_\gamma(\kappa)=${\tt 1i2s,1rn,2rn,3rn}, in which case  the coefficient
of $\P(\gamma,\lambda)$ is $0$.

If $\kappa\in\tau(\gamma)$
then $\mu=\gamma,\mu=w_\kappa\times\gamma$ or $\kappaarrow\gamma\mu$.
If $\kappaarrow\gamma\mu$ then $\gamma>\mu$ and this is not well
suited to our inductive hypothesis. So this case is only effective if
there is no such $\mu$, i.e. $t_\gamma(\kappa)=${\tt 1r1s,1ic,2ic,3ic}.

Here are the resulting formulas.

\begin{mytable}
\label{table:newnotkappa}
\hfil\break\newline
\begin{tabular}{|l|l|}
\hline
\multicolumn{2}{|c|}{Formula  \eqref{e:hsempty2}: $\kappa\not\in\tau(\gamma)$}\\\hline
$t_\gamma(\kappa)$ & LHS\\\hline  
{\tt 1C+}&$v\inv\P(\gamma,\lambda)+\P(w_\kappa\times\gamma,\lambda)$\\\hline
{\tt 1i1}&
$v\inv\P(\gamma,\lambda)+v\inv\P(w_\kappa\times\gamma,\lambda)+(1-v^{-2})\P(\gamma^\kappa,\lambda)$\\\hline
{\tt 1i2f}&$2v\inv\P(\gamma,\lambda)+v\inv(v-v\inv)(\P(\gamma_1^\kappa,\lambda)+\P(\gamma_2^\kappa,\lambda))$\\\hline
{\tt 1i2s}&$0$\\\hline
{\tt 1rn}&$0$\\\hline
{\tt 2C+}&$v^{-2}\P(\gamma,\lambda)+\P(w_\kappa\times\gamma,\lambda)$\\\hline
{\tt 2Ci}&$v\inv(v+v^{-1})\P(\gamma,\lambda)+(v-v\inv)\P(\gamma^\kappa,\lambda)$\\\hline
{\tt 2i11}&$v^{-2}(\P(\gamma,\lambda)+\P(w_\kappa\times\gamma,\lambda))+(1-v^{-4})\P(\gamma^\kappa,\lambda)$
\\\hline
{\tt 2i12}&$2v^{-2}\P(\gamma,\lambda)+(1-v^{-4})\displaystyle\sum_{\gamma'|\kappaarrow{\gamma'}\gamma}\epsilon(\gamma',\gamma)\P(\gamma',\lambda)$
\\\hline
{\tt 2i22}&$2v^{-2}\P(\gamma,\lambda)+(1-v^{-4})(\P(\gamma^\kappa_1,\lambda)+\P(\gamma^\kappa_2,\lambda))$\\\hline
{\tt 2rn}&$0$\\\hline
{\tt
  3C+}&$v^{-3}\P(\gamma,\lambda)+\P(w_\kappa\times\gamma,\lambda)$\\\hline
{\tt 3Ci,3i}&$v^{-2}(v+v\inv)\P(\gamma,\lambda)+(v^2-v^{-2})v\inv\P(\gamma^\kappa,\lambda)$\\\hline
{\tt 3rn}&$0$\\\hline
\end{tabular}
\end{mytable}

\begin{mytable}
\label{table:newkappa}
\hfil\break\newline
\begin{tabular}{|l|l|}
\hline
\multicolumn{2}{|c|}{Formula  \eqref{e:hsempty2}: $\kappa\in\tau(\gamma)$}\\\hline
$t_\gamma(\kappa)$ & LHS\\\hline  
{\tt
  1r1s}&$(v+v\inv)\P(\gamma,\lambda)$\\\hline
{\tt 1ic}&$(v+v\inv)\P(\gamma,\lambda)$\\\hline
{\tt 2ic}&$(v^2+v^{-2})\P(\gamma,\lambda)$\\\hline
{\tt 3ic}&$(v^3+v^{-3})\P(\gamma,\lambda)$\\\hline
\end{tabular}
\end{mytable}

Solve these for $\P(\gamma,\lambda)$.

\begin{lemma}
Assume $t_\lambda(\kappa)=${\tt 1i2s,1rn,2rn,3rn}. 
Then:

\begin{mytable}
\label{table:new}
\hfil\break\newline
\begin{tabular}{|l|l|}
\hline
\multicolumn{2}{|c|}{Formula for $\P(\gamma,\lambda)$, $t_\lambda(\kappa)=${\tt 1i2s,1rn,2rn,3rn}. }\\\hline
$t_\gamma(\kappa)$ & Formula for $\P(\gamma,\lambda)$
\\\hline  
$\kappa\not\in\tau(\gamma)$&\\\hline
{\tt 1C+}&$\P(\gamma,\lambda)=-v\P(w_\kappa\times\gamma,\lambda)+v\U(\gamma,\lambda)$\\\hline
{\tt 1i1}&$\P(\gamma,\lambda)+\P(w_\kappa\times\gamma,\lambda)=-(v-v^{-1})\P(\gamma^\kappa,\lambda)+v\U(\gamma,\lambda)$\\\hline
{\tt 1i2f}&$\P(\gamma,\lambda)=-2(v-v\inv)(\P(\gamma_1^\kappa,\lambda)+\P(\gamma_2^\kappa,\lambda))+2v\U(\gamma,\lambda)$\\\hline
{\tt 1i2s}&none\\\hline
{\tt 1rn}&none\\\hline
{\tt 2C+}&$\P(\gamma,\lambda)=-v^2\P(w_\kappa\times\gamma,\lambda)+v^2\U(\gamma,\lambda)$\\\hline
{\tt 2Ci}&$(v+v^{-1})\P(\gamma,\lambda)=-v(v-v\inv)\P(\gamma^\kappa,\lambda)+v\U(\gamma,\lambda)$\\\hline
{\tt 2i11}&$\P(\gamma,\lambda)+\P(w_\kappa\times\gamma,\lambda)=-(v^2-v^{-2})\P(\gamma^\kappa,\lambda)+v^2\U(\gamma,\lambda)$
\\\hline
{\tt 2i12}&$2v^{-2}\P(\gamma,\lambda)=-\frac12(v^2-v^{-2})\displaystyle\sum_{\gamma'|\kappaarrow{\gamma'}\gamma}\epsilon(\gamma',\gamma)\P(\gamma',\lambda)+\tfrac12v^2\U(\gamma,\lambda)$
\\\hline
{\tt 2i22}&$\P(\gamma,\lambda)=-\frac12(v^2-v^{-2})(\P(\gamma^\kappa_1,\lambda)+\P(\gamma^\kappa_2,\lambda))+\frac12v^2\U(\gamma,\delta)$\\\hline
{\tt 2rn}&none\\\hline
{\tt
  3C+}&$\P(\gamma,\lambda)=-v^3\P(w_\kappa\times\gamma,\lambda)+v^3\U(\gamma,\delta)$\\\hline
{\tt 3Ci,3i}&$(v+v\inv)\P(\gamma,\lambda)=-v(v^2-v^{-2})\P(\gamma^\kappa,\lambda)+v^2\U(\gamma,\lambda)$\\\hline
{\tt 3rn}&none\\\hline\hline
$\kappa\in\tau(\gamma)$&\\\hline
{\tt 1r1s}&$(v+v\inv)\P(\gamma,\lambda)=\U(\gamma,\lambda)$\\\hline
{\tt 1ic}&$(v+v\inv)\P(\gamma,\lambda)=\U(\gamma,\lambda)$\\\hline
{\tt 2ic}&$(v^2+v^{-2})\P(\gamma,\lambda)=\U(\gamma,\lambda)$\\\hline
{\tt 3ic}&$(v^3+v^{-3})\P(\gamma,\lambda)=\U(\gamma,\lambda)$\\\hline
\end{tabular}
\end{mytable}
\end{lemma}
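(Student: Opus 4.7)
The plan is to derive each entry of the target table directly from Lemma \ref{l:lhsempty2}. Under the hypothesis $t_\lambda(\kappa)\in\{\text{\tt 1i2s,1rn,2rn,3rn}\}$, that lemma provides, for every $\gamma$, the identity
\begin{equation*}
\sum_\mu \P(\mu,\lambda)\cdot(\text{coefficient of } \a_\gamma \text{ in } \T_\kappa\a_\mu) = \U(\gamma,\lambda).
\end{equation*}
The left-hand side expands as a linear combination, with coefficients in $\Z[v,v^{-1}]$, of polynomials $\P(\mu,\lambda)$ ranging over at most three values of $\mu$ (typically $\gamma$, $w_\kappa\times\gamma$, and $\gamma^\kappa$ or its double-valued analogues), and the coefficients depend only on $t_\gamma(\kappa)$. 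These expansions are precisely the content of Tables \ref{table:newnotkappa} and \ref{table:newkappa}, so no further computation of $\T_\kappa\a_\mu$ is needed.

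My approach is then to walk row-by-row through the cases for $t_\gamma(\kappa)$. For each row I will identify the coefficient $c(v)\in\Z[v,v^{-1}]$ of $\P(\gamma,\lambda)$ on the LHS, transpose the remaining $\P(\mu,\lambda)$ terms (where $\mu=w_\kappa\times\gamma$, or $\kappaarrow\mu\gamma$) to the right-hand side together with $\U(\gamma,\lambda)$, and multiply through by a suitable monomial in $v$ to clear denominators. A representative calculation is $t_\gamma(\kappa)=$~{\tt 1C+}: the identity $v^{-1}\P(\gamma,\lambda)+\P(w_\kappa\times\gamma,\lambda)=\U(\gamma,\lambda)$, after multiplication by $v$ and transposition, becomes $\P(\gamma,\lambda)=-v\P(w_\kappa\times\gamma,\lambda)+v\U(\gamma,\lambda)$, matching the advertised entry. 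The remaining rows are handled identically. The only wrinkle is that in the cases $t_\gamma(\kappa)\in\{\text{\tt 1i1, 2i11, 2i22}\}$ the coefficients of $\P(\gamma,\lambda)$ and of the companion $\P(\mu,\lambda)$ coincide, so the cleared identity can only be solved for a pairwise sum $\P(\gamma,\lambda)+\P(w_\kappa\times\gamma,\lambda)$ (or the corresponding double-valued analogue) rather than for $\P(\gamma,\lambda)$ individually; the table reflects this by stating the identity in that form.

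The rows marked ``none'' correspond exactly to $t_\gamma(\kappa)\in\{\text{\tt 1i2s,1rn,2rn,3rn}\}$: in these cases $\T_\kappa\a_\gamma=0$ (read off Table \ref{table:Tagammakappa}), so no $\a_\gamma$ ever appears on the left, every coefficient in the sum vanishes, and the identity collapses to $0=\U(\gamma,\lambda)$. This yields no information about $\P(\gamma,\lambda)$, which is why no formula is offered; such $(\gamma,\lambda)$ must be reached by a different recurrence. There is no serious obstacle beyond bookkeeping: the main task is careful tracking of signs and of the powers of $v$ multiplying each $\P(\mu,\lambda)$ across the thirty cases, together with distinguishing the rows that isolate $\P(\gamma,\lambda)$ from those yielding only a paired identity.
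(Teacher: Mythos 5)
Your approach is exactly the paper's: the lemma before the table (Lemma \ref{l:lhsempty2}) gives the identity $\sum_\mu \P(\mu,\lambda)\cdot(\text{coeff.\ of }\a_\gamma\text{ in }\T_\kappa\a_\mu)=\U(\gamma,\lambda)$, the left-hand side is expanded case-by-case in Tables \ref{table:newnotkappa} and \ref{table:newkappa}, and the paper's ``proof'' then consists precisely of the one-line instruction ``Solve these for $\P(\gamma,\lambda)$.'' Your worked example for {\tt 1C+} and your explanation of the {\tt none} rows (where $\T_\kappa\a_\gamma=0$, so $\P(\gamma,\lambda)$ disappears from the identity) both match.

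One small misstep: you list {\tt 2i22} among the cases where only a pairwise sum $\P(\gamma,\lambda)+\P(w_\kappa\times\gamma,\lambda)$ can be isolated. That is not so. In {\tt 1i1} and {\tt 2i11} the two basis vectors $\a_\gamma$ and $\a_{w_\kappa\times\gamma}$ enter $\T_\kappa\a_\mu$ symmetrically with the same coefficient $v^{-\ell(\kappa)}$, so the two unknowns really cannot be separated. In {\tt 2i22}, however, $w_\kappa\times\gamma=\gamma$; the coefficient of $\P(\gamma,\lambda)$ is $2v^{-2}$ and there is no companion term, so the identity solves cleanly for $\P(\gamma,\lambda)$ alone — consistent with the form of the table entry and with the remark in Proposition \ref{p:newrecursion}, which singles out only {\tt 1i1, 2i11} as the paired cases. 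Aside from that, your description is a faithful unpacking of what the paper leaves implicit.
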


In every case we know all terms on the RHS by induction, with the possible exception of $\U(\gamma,\lambda)$. 
By Lemma \ref{l:U}, since $t_\lambda(\kappa)\ne${\tt 2Ci,3Ci,3i},
\begin{equation}
\begin{aligned}
\U(\gamma,\lambda)=\sum_{\substack{\delta|\kappa\in\tau(\delta)\\\ell(\gamma)\le\ell(\delta)<\ell(\lambda)}}
\P(\gamma,\delta)m_\kappa(\delta,\lambda)+
\sum_{\substack{\delta|t_\delta(\kappa)={\tt 2Cr,3Cr,3r}\\\ell(\delta)=\ell(\lambda)+\ell(\kappa)-2}}
\P(\gamma,\delta)\Mu_{-1}(\delta_\kappa,\lambda)
\end{aligned}
\end{equation}
The second sum is  problematic, so we give it a name.
\begin{definition}
\label{d:Udagger}
Suppose $\kappa\not\in\tau(\lambda)$. Define:
\begin{equation}
\U^\dagger(\gamma,\lambda)
\sum_{\substack{\delta|t_\delta(\kappa)={\tt 2Cr,3Cr,3r}\\\ell(\delta)=\ell(\lambda)+\ell(\kappa)-2}}
\P(\gamma,\delta)\Mu_{-1}(\delta_\kappa,\lambda)
\end{equation}
\end{definition}
This term doesn't fit well in our recursion scheme, since
$\ell(\delta)>\ell(\lambda)$. 
(In the direct recursion section this wasn't an issue, since we were
computing 
$\P(\gamma,\mu)$, and $\ell(\mu)>\ell(\delta)$.) 
We're hoping this term is usually $0$.

\begin{definition}
Fix $\kappa\in\overline S,\gamma,\lambda\in \D^\sigma$, with 
$\kappa\not\in\tau(\lambda)$. 

We say condition A holds for $(\kappa,\gamma,\lambda)$ if 
\begin{equation}
\label{e:A}
\tag{A}
\kappa\in\tau(\delta),\,\defect_\delta(\kappa)=1,\,\ell(\delta)=\ell(\lambda)+\ell(\kappa)+2\Rightarrow
\P(\gamma,\delta)\Mu_{-1}(\delta_\kappa,\lambda)=0
\end{equation}
This is automatic if $\ell(\kappa)=1$.

We say condition $(\dagger)$ holds for  $(\kappa,\gamma,\lambda)$  if 
\begin{equation}
\tag{$\dagger$}\U^\dagger(\gamma,\lambda)=0
\end{equation}
\end{definition}

Obviously (A)$\Rightarrow(\dagger)$, although (A) is easier to check.
It often holds simply because the set in   (A) is empty.

So now we need to compute:
\begin{equation}
\label{e:Ugammalambda}
\U(\gamma,\lambda)=
m_\kappa(\gamma,\lambda)+\sum_{\substack{\delta|\kappa\in\tau(\delta)\\\ell(\gamma)<\ell(\delta)<\ell(\lambda)}}
\P(\gamma,\delta)m_\kappa(\delta,\lambda)+\U^\dagger(\gamma,\lambda)
\end{equation}
with the first term present only if $\kappa\in\tau(\gamma)$.

As in the discussion in Section
\ref{s:analysis} we know all of the terms in the first sum. 
There is one crucial difference with the direct recursion of Proposition \ref{p:directrecursion}.
In that lemma we were computing $\P(\gamma,\mu)$, and we needed
$\P(\gamma,\lambda)$, where $\kappaarrow\mu\lambda$. This gave us a
little extra room when applying the inductive hypothesis that we know
$\P(*,\mu')$ with $\mu'<\mu$. 

We consider the terms appearing in the sum in \eqref{e:Ugammalambda}.

If $\kappa\in\tau(\gamma)$ we don't know the term
$m_\kappa(\gamma,\lambda)$. 

By the inductive hypothesis we know all terms $\P(\gamma,\delta)$
occuring in \eqref{e:Ugammalambda}. 
Consider $m_\kappa(\delta,\lambda)$. 
This requires various $\P(*,\lambda')$ with
$\ell(\lambda)'<\ell(\lambda)$, and
$\P(\delta',\lambda)$ with $\ell(\delta')\ge
\ell(\delta)>\ell(\gamma)$, all of which we know.
The only potential problems are the terms 
$$
\P(\gamma,\delta)\Mu_{-1}(\delta_\kappa,\lambda)\text{ if } \defect_\gamma(\kappa)=1
$$
which occur in the formulas for $m_\kappa(\delta,\lambda)$
(the opposite case $\P(\delta,\lambda^\kappa)$ does not occur since
$t_\lambda(\kappa)\ne${\tt 2Ci,3Ci,3i}).
If $\ell(\delta_\kappa)>\ell(\gamma)$ we know this by induction. 
This leaves:
$$
\P(\gamma,\delta)\Mu_{-1}(\delta_\kappa,\lambda)\quad
\ell(\delta_\kappa)\le \ell(\gamma)<\ell(\delta), \defect_\gamma(\kappa)=1.
$$
This might be an issue. (In the Direct Recursion this was
taken care of by the fact that $\lambda<\mu$). 
As in the discussion after Definition \ref{d:kappaless} this 
term is nonzero only if:
$$
\ell(\delta)=\ell(\gamma)+\ell(\kappa)-2,\quad
\ell(\gamma_\kappa)=\ell(\delta)-1
$$
\begin{definition}
Fix $\kappa\in\overline S,\gamma,\lambda\in \D^\sigma$, with 
$\kappa\not\in\tau(\lambda)$. We say condition
(B) holds for $(\kappa,\gamma,\lambda)$ if 
\begin{equation}
\tag{B}
\kappa\in\tau(\delta),\,\defect_\delta(\kappa)=1,\,\ell(\delta)=\ell(\gamma)+\ell(\kappa)+2\Rightarrow
\P(\gamma,\delta)\Mu_{-1}(\delta_\kappa,\lambda)=0
\end{equation}
This is automatic if $\ell(\kappa)=1$.

\end{definition}

Here is the conclusion.

\newpage

\begin{proposition}
\label{p:newrecursion}
Fix $\kappa\in\overline S,\gamma,\lambda\in \D^\sigma$, satisfying:
\begin{enumerate}
\item $t_\gamma(\kappa)=${\tt 1C+, 1i2f, 2C+, 2Ci, 2i12,2i22, 3C+, 3Ci, 3i,
1r1s, 1ic, 2ic, 3ic}.
\item $t_\lambda(\kappa)=${\tt 1i2s,1rn,2rn,3rn}
\end{enumerate}
Assume conditions (A) and (B) hold. 
Then the formulas of Table \ref{table:new} give an effective recursion 
relation for $\P(\gamma,\lambda)$.
It is sufficient to assume Conditions ($\dagger$) and (B). 

If $t_\gamma(\kappa)=${\tt 1i1,2i11} the same result holds, except
that we get a formuls for
$\P(\gamma,\lambda)+\P(w_\kappa\times\gamma,\lambda)$. 
\end{proposition}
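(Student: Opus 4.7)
The strategy is to specialise Lemma \ref{l:lhsempty2} to the given hypotheses and then verify that every term appearing in the resulting identity that is not $\P(\gamma,\lambda)$ itself is already known via the inductive scheme \eqref{e:indhypo}, modulo exactly the two problematic contributions that conditions $(\dagger)$ and (B) are designed to control. First I would apply Lemma \ref{l:lhsempty2} to the pair $(\gamma,\lambda)$, which is legitimate because the hypothesis $t_\lambda(\kappa)\in\{\text{\tt 1i2s,1rn,2rn,3rn}\}$ is precisely the case treated there. Expanding the left-hand side of \eqref{e:hsempty2} using Section \ref{s:formulasfortkappa} (equivalently Table \ref{table:Tagamma}) then reproduces mechanically, row by row, the formulas tabulated in Table \ref{table:new}. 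The rest of the proof is the verification that these formulas can be solved for $\P(\gamma,\lambda)$ (or, in the two exceptional types, for the sum $\P(\gamma,\lambda)+\P(w_\kappa\times\gamma,\lambda)$).

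Next I would inspect each row of Table \ref{table:new} and classify the $\P$-values on the right. For every ascent type in (1), the $\P$-values other than $\P(\gamma,\lambda)$ appear at $w_\kappa\times\gamma$, at Cayley transforms $\gamma^\kappa$, $\gamma^\kappa_1$, $\gamma^\kappa_2$, or at parameters $\gamma'$ with $\kappaarrow{\gamma'}{\gamma}$. In every one of these cases the first argument has strictly greater length than $\ell(\gamma)$, so the value is supplied by \eqref{e:indhypo}(b). This already explains why in types \texttt{1i1} and \texttt{2i11} one can only recover the sum with $\P(w_\kappa\times\gamma,\lambda)$: in those two rows $w_\kappa\times\gamma$ is a distinct parameter of the same length, so it lies outside the inductive reach. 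For the descent types \texttt{1r1s,\,1ic,\,2ic,\,3ic} in (1), the only $\P$-value on the right is $\P(\gamma,\lambda)$ multiplied by $v^k+v^{-k}$, so the entire question reduces to controlling $\U(\gamma,\lambda)$.

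The core of the argument is therefore the analysis of $\U(\gamma,\lambda)$ via Lemma \ref{l:U}. Because $t_\lambda(\kappa)\in\{\text{\tt 1i2s,1rn,2rn,3rn}\}$ we have $\defect_\lambda(\kappa)=0$, so the third sum in the decomposition of $\U(\gamma,\lambda)$ vanishes outright, leaving
\[
\U(\gamma,\lambda)=\sum_{\substack{\delta\,|\,\kappa\in\tau(\delta)\\ \ell(\gamma)\le\ell(\delta)<\ell(\lambda)}}\!\!\P(\gamma,\delta)\,m_\kappa(\delta,\lambda)\;+\;\U^\dagger(\gamma,\lambda).
\]
In the main sum $\P(\gamma,\delta)$ is known by \eqref{e:indhypo}(a) since $\ell(\delta)<\ell(\lambda)$, and $m_\kappa(\delta,\lambda)$ is delivered by Theorem \ref{t:T_kappa}. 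Unpacking that theorem, the only ingredients that are not automatically supplied by \eqref{e:indhypo} are the defect-1 contributions of the form $\P(\gamma,\delta)\,\Mu_{-1}(\delta_\kappa,\lambda)$ with $\ell(\delta_\kappa)\le\ell(\gamma)<\ell(\delta)$. This is exactly the scenario addressed by condition (B), which stipulates that each such summand is zero.

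Finally, the residual term $\U^\dagger(\gamma,\lambda)$ of Definition \ref{d:Udagger} involves $\P(\gamma,\delta)$ with $\ell(\delta)=\ell(\lambda)+\ell(\kappa)-2$, which can exceed $\ell(\lambda)$ and so lies outside the reach of \eqref{e:indhypo}; this is the principal obstacle and the reason the proposition must invoke $(\dagger)$ (or the stronger, easier-to-check (A)), which asserts that $\U^\dagger(\gamma,\lambda)=0$. Once $(\dagger)$ and (B) are in force, every quantity on the right of each row of Table \ref{table:new} is either known by induction or forced to vanish, so the formula determines $\P(\gamma,\lambda)$ uniquely (resp.\ determines the sum $\P(\gamma,\lambda)+\P(w_\kappa\times\gamma,\lambda)$ in types \texttt{1i1}, \texttt{2i11}). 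The hard part is not the solving step but this bookkeeping: matching each sub-ingredient in the expansion of $m_\kappa(\delta,\lambda)$ from Theorem \ref{t:T_kappa} against the induction hypothesis and isolating precisely the out-of-range pieces that must be killed by (B), parallel to the $\U^\dagger$ obstruction handled by $(\dagger)$.
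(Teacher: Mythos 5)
Your overall decomposition is the right one: invoke Lemma \ref{l:lhsempty2}, unfold the $T_\kappa$-action on $\a_\gamma$ to reproduce Table \ref{table:new} row by row, split $\U(\gamma,\lambda)$ via Lemma \ref{l:U} noting that $\defect_\lambda(\kappa)=0$ kills the third sum, and then let $(\dagger)$ handle $\U^\dagger$ and (B) handle the out-of-range $\Mu_{-1}(\delta_\kappa,\lambda)$ terms inside $m_\kappa(\delta,\lambda)$. The explanation of the \texttt{1i1}/\texttt{2i11} caveat is also correct.

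However, there is a genuine gap in the treatment of the four descent types $t_\gamma(\kappa)\in\{\text{\tt 1r1s,1ic,2ic,3ic}\}$. In that range $\kappa\in\tau(\gamma)$, so the first sum in Lemma \ref{l:U} starts at $\ell(\gamma)\le\ell(\delta)$ and therefore contains the term $\delta=\gamma$, contributing $\P(\gamma,\gamma)\,m_\kappa(\gamma,\lambda)=m_\kappa(\gamma,\lambda)$. You assert that $m_\kappa(\delta,\lambda)$ is ``delivered by Theorem \ref{t:T_kappa}'' and hence known, but for $\delta=\gamma$ that formula is expressed in terms of $\Mu_{-1}(\gamma,\lambda)$, $\Mu_{-2}(\gamma,\lambda)$, $\Mu_{-3}(\gamma,\lambda)$, which are exactly the top coefficients of the unknown $\P(\gamma,\lambda)$. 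The equation you are trying to solve therefore has the shape
\[
(v^{\ell(\kappa)}+v^{-\ell(\kappa)})\,\P(\gamma,\lambda)\;=\;m_\kappa(\gamma,\lambda)\;+\;(\text{known}),
\]
with the unknown appearing (through its leading coefficients) on both sides. That circularity is not resolved by the induction \eqref{e:indhypo} alone; one still has to show the system is solvable. This is precisely what the table of equations preceding Lemma \ref{l:topterm2} and Lemma \ref{l:topterm2} itself (the promised generalisation of Lemma \ref{l:topterm}) accomplish: after writing $\P(\gamma,\lambda)=v^{-\epsilon}f(v^{-2})$ with $\epsilon$ determined by parity, one sees that the unknown leading coefficients contribute only a bounded number of top-degree terms, and an equation of the form $(1\pm q^k)f(q)=g(q)$ with the top $k$ coefficients of $g$ unknown still determines $f$. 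Your proof needs this final step; without it the ``solve for $\P(\gamma,\lambda)$'' claim in the descent rows is unjustified. (When $\kappa\not\in\tau(\gamma)$ the $\delta=\gamma$ term is absent from Lemma \ref{l:U}, which is why the ascent rows in your analysis do not face this issue.)
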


\begin{remarkplain}
\hfil
\begin{enumerate}
\item We allow $\lambda$ if  $\kappa\not\in\tau(\lambda)$ but
there is no $\lambda'$ with $\kappaarrow{\lambda'}{\lambda}$ (see
Lemma \ref{l:lhsempty2}): {\tt 1i2s,1rn,2rn,3rn}. This gives the
$\lambda$ of the Proposition.
\item If $\kappa\not\in\tau(\gamma)$ we exclude $\gamma$ if the
  formula has two terms on the LHS: {\tt 1i1, 2i11}
\item If $\kappa\not\in\tau(\gamma)$ we exclude $\gamma$  
if  there is no $\gamma'$ with
  $\kappaarrow{\gamma'}{\gamma}$: type {\tt 1i2s, 1rn, 2rn, 3rn}.
Together (2) and (3) leave:
{\tt 1C+, 1i2f, 2C+, 2Ci, 2i12,2i22, 3C+, 3Ci, 3i}
\item If $\kappa\in\tau(\gamma)$ we include $\gamma$ if  there is no $\gamma'$ with
  $\kappaarrow\gamma{\gamma'}$: {\tt 1r1s,1ic,2ic, 3ic}. (2-4) give
  the $\gamma$ of the Proposition.
\end{enumerate}
\end{remarkplain}

If $\kappa\not\in\tau(\gamma)$ the term $m_\kappa(\gamma,\lambda)$
does not appear in Lemma \ref{l:U}, 
and the Lemma is evident from the preceding discussion.
If $\kappa\in\tau(\gamma)$  we need a generalization of 
Lemma \ref{l:topterm}.
Here are the cases.

In the column
$m_\kappa(\gamma,\lambda)$ $\alpha$ is an unknown constant. 
We have written $\P(\gamma,\lambda)=v\inv f(v^{-2})$ or
$v^{-2}f(v^{-2})$, depending on the parity of
$\ell(\lambda)-\ell(\gamma)$, where $f$ is a polynomial.
In the last column
$g$ is a  polynomial which is known, and $\alpha,\beta,\gamma$ are
unkown constants. We want to solve for $f$.

\newpage

\begin{tabular}{|l|l|l|l|}
\hline
$\ell(\kappa)$&$\ell(\lambda)-\ell(\gamma)$&
$m_\kappa(\gamma,\lambda)$ & equation\\\hline
$1$& even & 0 &   $(v\pm v\inv)v^{-2}f(v^{-2})=v\inv g(v^{-2})$\\\hline
$1$& odd & $\alpha$ &   $(v\pm v\inv)v^{-1}f(v^{-2})=\alpha+v^{-2} g(v^{-2})$\\\hline
$2$& even & $\alpha$ &   $(v^2- v^{-2})v^{-2}f(v^{-2})=\alpha+v^{-2} g(v^{-2})$\\\hline
$2$& odd & $\alpha(v+v^{-1})$ &   $(v^2- v^{-2})v^{-1}f(v^{-2})=\alpha v+\beta v\inv+v^{-3} g(v^{-2})$\\\hline
$3$& even & $\alpha(v+v^{-1})$ &   $(v^3- v^{-3})v^{-2}f(v^{-2})=\alpha v+\beta v\inv+v^{-3} g(v^{-2})$\\\hline
$3$& odd & $\alpha$ &   $(v^3- v^{-3})v^{-1}f(v^{-2})=\beta v^2+\alpha+\gamma v^{-2}+v^{-4} g(v^{-2})$\\\hline
\end{tabular}

\begin{lemma}
\label{l:topterm2}
In each case in the table we can solve for $f$.
\end{lemma}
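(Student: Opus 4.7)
The plan is to handle all six rows of the table by the same bottom-up coefficient-matching procedure. Write $f(v^{-2}) = c_0 + c_1 v^{-2} + \cdots + c_n v^{-2n}$; the degree bound $n$ exists a priori because $\P(\gamma,\lambda) \in v^{-1}\Z[v^{-2}]$ or $v^{-2}\Z[v^{-2}]$, and in fact follows from Theorem \ref{t:Chatdelta}. In each case the left hand side is an explicit Laurent polynomial of the form $(v^k \pm v^{-k})\, v^{-j}\, f(v^{-2})$, so multiplication by this factor is a banded linear map on the sequence $(c_0,\dots,c_n)$ whose highest and lowest contributing powers of $v$ are $c_0$ and $c_n$ respectively. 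All powers of $v$ appearing on either side have the same parity (even in rows with $\ell(\kappa)=1$ or $\ell(\kappa)=3$ even, odd otherwise), so the problem decouples into a single sequence of linear equations, one per occurring power.

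The key observation is that the unknown constants $\alpha$, $\beta$, $\gamma$ on the right hand side sit exactly at the $\ell(\kappa)$ highest (least negative) powers of $v$, while the known polynomial $g$ occupies the remaining lower powers. Matching coefficients starting from the most negative power, the equation at the bottom power gives $c_n$ directly in terms of the top coefficient of $g$; the next equation gives $c_{n-1}$; and so on. In rows 3 and 5 the bandwidth produces $c_k = -c_{k+2} + (\text{known})$ style recursions, and in row 6 the bandwidth produces $c_k = -c_{k+3} + (\text{known})$; in either case each equation, read from the bottom up, determines one new $c_k$. After all of $c_n, c_{n-1}, \dots, c_{\ell(\kappa)}$ (or an equivalent count) have been found, the remaining top $\ell(\kappa)$ equations then read off $c_{\ell(\kappa)-1}, \dots, c_0$ and, as a by-product, the unknowns $\alpha$, $\beta$, $\gamma$. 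This is the direct generalization of Lemma \ref{l:topterm}, and the case $\ell(\kappa)=2$, $\ell(\lambda)-\ell(\gamma)$ odd reduces to it.

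The only points to verify are consistency and existence. Consistency is automatic because the system is triangular: each $c_k$ is assigned by exactly one equation, and each of $\alpha,\beta,\gamma$ by exactly one (distinct) equation. Existence is guaranteed by the way the equation arose: in Proposition \ref{p:newrecursion}, both sides are known (Laurent-polynomial) expansions of the identity $\T_\kappa \Chat_\lambda = \sum m_\kappa(\delta,\lambda) \Chat_\delta$, so a polynomial $f$ satisfying the equation does exist, and since multiplication by $v^k \pm v^{-k}$ is injective on $v^{-j}\Z[v^{-2}]$, that $f$ is unique.

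The main obstacle to watch for is a minor bookkeeping issue: in the two $\ell(\kappa)=3$ rows and the two $\ell(\kappa)=2$ rows with $\ell(\lambda)-\ell(\gamma)$ odd, the degree $n$ of $f$ could be small enough that the bottom-up recursion reaches the region of the unknowns $\alpha,\beta,\gamma$ before all $c_k$ are determined. One must therefore argue separately in these low-degree cases that the bottom equations still determine the unknowns consistently; concretely, if $n < \ell(\kappa)-1$, the sequence of $c_k$ is determined directly and the unknowns are overdetermined but consistent by the existence argument above. Once this check is in place, the resulting explicit formulas produce $f$ in every case.
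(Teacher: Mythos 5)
Your proof is correct and follows essentially the same route as the paper's: the paper also clears the power of $v$ to reduce every row to the triangular system $(1\pm q^k)f(q)=g(q)$ with only the top $k$ coefficients of $g$ unknown, and then solves for $f$ by the identical bottom-up coefficient recursion (a direct generalization of Lemma \ref{l:topterm}) that you describe. The only difference is packaging: the paper isolates the final step as a short sub-lemma in the variable $q=v^2$, while you carry out the same banded-matrix argument directly in $v$, with the same observation that existence of $f$ is guaranteed a priori.
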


After multiplying by the appropriate power of $v$, these all come down
to:

\begin{lemma}
Suppose $(1\pm q^k)f(q)=g(q)$ where $g(q)$ is a polyomial. If we know
all but the top $k$ coefficients of $g$, then we can solve for $f$.
\end{lemma}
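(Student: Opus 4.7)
The plan is to reduce the lemma to solving a triangular linear system by comparing coefficients. Write $f(q) = \sum_{i=0}^n c_i q^i$, where $n$ is known (in the context of application, $f$ has a prescribed degree, so we know how many unknowns $c_0,\dots,c_n$ we are solving for). Then $g(q) = (1 \pm q^k) f(q)$ has degree $n+k$, and expanding gives
\[
g(q) = \sum_{j=0}^{n+k} g_j q^j, \qquad g_j = c_j \pm c_{j-k},
\]
with the convention that $c_i = 0$ for $i<0$ or $i>n$. By hypothesis we know $g_0, g_1, \dots, g_n$ (the ``top $k$ coefficients'' of $g$ being $g_{n+1},\dots,g_{n+k}$).

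The key observation is that the relation $g_j = c_j \pm c_{j-k}$ is triangular in $c_j$: for $0 \le j < k$ we have $g_j = c_j$ directly, determining $c_0,\dots,c_{k-1}$. For $k \le j \le n$, I would then proceed by induction: given $c_{j-k}$, the equation $c_j = g_j \mp c_{j-k}$ determines $c_j$ uniquely. Each of these uses only a $g_j$ with $j \le n$, so only the known coefficients of $g$ are invoked. This produces all $c_0,\dots,c_n$, hence $f$.

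There is essentially no obstacle: the procedure is unique because the matrix of the system is lower triangular with $1$'s on the diagonal in the unknowns $c_0,\dots,c_n$, so inversion is automatic. The only thing worth noting is a consistency check, namely that the ``used up'' relations $g_0,\dots,g_n$ are indeed compatible (they are, by construction, since we started from a genuine product $(1\pm q^k)f$), and that the top $k$ unused coefficients $g_{n+1},\dots,g_{n+k}$ are then automatically determined by $c_{n+1-k},\dots,c_n$ via $g_{n+j} = \pm c_{n+j-k}$; this is why we are allowed not to know them. Applied to each row of the preceding table (after multiplying through by the appropriate power of $v$ and setting $q=v^2$ or $q=v^{-2}$), this yields Lemma \ref{l:topterm2} and completes the recursion.
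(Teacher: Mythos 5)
Your proof is correct and takes essentially the same approach as the paper's (which works out the case $k=1$ explicitly in Lemma \ref{l:topterm} and leaves the general case as a straightforward extension): compare coefficients starting from the bottom, observe the system is triangular with $1$'s on the diagonal, and solve for $c_0,\dots,c_n$ using only $g_0,\dots,g_n$.
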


\sec{Guide}
\label{s:guide}
In the following tables, we've indicated which formulas to use in various cases.

\begin{enumerate}
\item *: not primitive, easy recursion
\item NE: not extremal (but primitive): easy recursion, but has a sum on the right hand side
\item *0: not primitive, necessarily $0$
\item DR: direct recursion (Proposition \ref{p:directrecursion})
\item DR+: new type of direct recursion (Proposition
  \ref{p:newrecursion}, $\ell(\kappa)=1$)
\item DR+?: new type of direct recursion, but the recursion may
  not work. See Proposition \ref{p:newrecursion}, $\ell(\kappa)=2,3$;
  we need conditions (A) and (B).
\end{enumerate}

In (2), \{{\it non-primitive}\}$\subset$\{{\it non-extremal}\}; the
pairs marked NE are in the second set, but not the first
(they are 
{\it not non-primitive}).

\newpage

\centerline{\bf\large Type 1}
\bigskip

{\small
\tt\begin{tabular}{|c|c|c|c|c|c||c|c|c|c|c|}
\hline
&1C-&1r1f&1r1s&1r2&1ic&1C+&1i1&1i2f&1i2s&1rn\\
\hline
1C-&DR&DR&&&&&&&&\\
\hline
1r1f&DR&DR&&&&&&&&\\
\hline
1r1s&DR&DR&&&&&&&DR+&DR+\\
\hline
1r2&DR&DR&&&&&&&&\\
\hline
1ic&DR&DR&&&&&&&DR+&DR+\\
\hline\hline
1C+&*&*&*&*&*&&&&DR+&DR+\\
\hline
1i1&*&*&*&*&*&&&&&\\
\hline
1i2f&NE&NE&NE&NE&NE&&&&DR+&DR+\\
\hline
1i2s&*0&*0&*0&*0&*0&&&&&\\
\hline
1rn&*0&*0&*0&*0&*0&&&&&\\
\hline
\end{tabular}}

\bigskip

\centerline{\bf\large Type 2}
\bigskip
{\small
{\tt\begin{tabular}{|c|c|c|c|c|c|c||c|c|c|c|c|c|}
\hline
&2C-&2Cr&2r22&2r21&2r11&2ic&2C+&2Cif&1i11&1i12&2i22&2rn\\
\hline
2C-&DR&DR&&&DR&&&&&&&\\
\hline
2Cr&DR&DR&&&DR&&&&&&&\\
\hline
2r22&DR&DR&&&DR&&&&&&&\\
\hline
2r21&DR&DR&&&DR&&&&&&&\\
\hline
2r11&DR&DR&&&DR&&&&&&&\\
\hline
2ic&DR&DR&&&DR&&&&&&&DR+?\\
\hline\hline
2C+&*&*&*&*&*&*&&&&&&DR+?\\
\hline
2Ci&*0&*0&*0&*0&*0&*0&&&&&&DR+?\\
\hline
2i11&*&*&*&*&*&*&&&&&&\\
\hline
2i12&NE&NE&NE&NE&NE&NE&&&&&&DR+?\\
\hline
2i22&*&*&*&*&*&*&&&&&&DR+?\\
\hline
2rn&*0&*0&*0&*0&*0&*0&&&&&&\\
\hline
\end{tabular}}
}

\newpage

\centerline{\bf\large Type 3}
\bigskip

{\small
\tt\begin{tabular}{|c|c|c|c|c||c|c|c|c|c|c|}
\hline
&3C-&3Cr&3r&3ic&3C+&3Ci&3i&3rn\\
\hline
3C-&DR&DR&DR&&&&&\\
\hline
3Cr&DR&DR&DR&&&&&\\
\hline
3r&DR&DR&DR&&&&&\\
\hline
3ic&DR&DR&DR&&&&&DR+?\\
\hline\hline
3C+&*&*&*&*&&&&DR+?\\
\hline
3Ci&*&*&*&*&&&&DR+?\\
\hline
3i&*&*&*&*&&&&DR+?\\
\hline
3rn&*0&*0&*0&*0&&&&\\
\hline
\end{tabular}}

\bigskip

\newpage










\sec{Appendix I: Proof of Theorem \ref{t:T_kappa}}
Throughout this section we assume $\kappa\in\tau(\gamma),\kappa\not\in\tau(\lambda)$.
Recall we are trying to find $m_\kappa(\gamma,\lambda)$ 
such that:
$$
\T_\kappa\Chat_\lambda=\sum_{\gamma|\kappa\in\tau(\gamma)}m_\kappa(\gamma,\lambda)\Chat_\gamma
$$
The main tool is the identity \eqref{e:basicidentity} (for $\kappa\not\in\tau(\lambda)$):

\begin{equation}
\label{e:recursion3}
\sum_{\mu|\kappa\in\tau(\mu)}\P(\gamma,\mu)m_\kappa(\mu,\lambda)=
\text{multiplicity of $\a_\gamma$ in }\T_\kappa(\Chat_\lambda)
\end{equation}
The right hand side is given by Table \ref{table:condensed2}, which we reproduce here.

\begin{mytable}
\label{table:condensed3}
\hfil\break\newline
\begin{tabular}{|l|l|l|}
\hline
$t_\gamma(\kappa)$ & RHS  of \eqref{e:recursion3}\\
\hline
{\tt 1C-,2C-,3C-}&$v^k\P(\gamma,\lambda)+\P(w_\kappa\times\gamma,\lambda)$\\
\hline
{\tt 1ic,2ic,3ic,1r1s}&$(v^k+v^{-k})\P(\gamma,\lambda)$\\
\hline
{\tt 2Cr,3Cr,3r}&$v(v^{k-1}-v^{-k+1})\P(\gamma,\lambda)+(v+v\inv)\P(\gamma_\kappa,\lambda)$\\
\hline
{\tt1r1f,2r11}&$(v^k-v^{-k})\P(\gamma,\lambda)+\P(\gamma_\kappa^1,\lambda)+\P(\gamma_\kappa^2,\lambda)$\\
\hline
{\tt1r2,2r22}&$v^k\P(\gamma,\lambda)-v^{-k}\P(w_\kappa\times\gamma,\lambda)+\P(\gamma_\kappa,\lambda)$\\
\hline
{\tt2r21}&$(v^2-v^{-2})\P(\gamma,\lambda)$+$\displaystyle\sum_{\gamma'|\kappaarrow\gamma{\gamma'}}
\epsilon(\gamma,\gamma')\P(\gamma',\lambda)$\\\hline
\end{tabular}
\end{mytable}
We are going to look at the top degree terms of both sides.
\newpage
Write any element of $\Z[v,v\inv]$ as $f=f^++f^-$ where $f^+\in\Z[v]$
and $f^-\in v\inv\Z[v\inv]$. 
We make frequent use of:

\begin{lemma}
If $\gamma=\mu$ then 
$$
[\P(\gamma,\mu)m_\kappa(\mu,\lambda)]^+=m_{\kappa,0}(\mu,\lambda)+m_{\kappa,1}(\mu,\lambda)v+m_{\kappa,2}(\mu,\lambda)v^2
$$
If $\gamma<\mu$ then
$$
\begin{aligned}
\P(\gamma,\mu)m_\kappa(\mu,\lambda)^+=&
[\Mu_{-2}(\gamma,\mu)m_{\kappa,2}(\mu,\lambda)+
\Mu_{-1}(\gamma,\mu)m_{\kappa,1}(\mu,\lambda)]+\\
&\Mu_{-1}(\gamma,\mu)m_{\kappa,2}(\mu,\lambda)v
\end{aligned}
$$
\end{lemma}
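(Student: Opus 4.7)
The plan is to reduce the lemma to a direct multiplication of two polynomials with known shapes, and then read off the positive part.

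First I would recall the structural information about the two factors. From Lemma \ref{l:sumtau}, $m_\kappa(\mu,\lambda)$ has one of three forms depending on $\ell(\kappa)\in\{1,2,3\}$, but in all cases it can be written uniformly as
\[
m_\kappa(\mu,\lambda)=m_{\kappa,0}(\mu,\lambda)+m_{\kappa,1}(\mu,\lambda)(v+v^{-1})+m_{\kappa,2}(\mu,\lambda)(v^2+v^{-2}),
\]
with the convention that $m_{\kappa,1}=0$ when $\ell(\kappa)=1$ and $m_{\kappa,2}=0$ when $\ell(\kappa)\le 2$. From Theorem \ref{t:Chatdelta}(3)(4) we have $\P(\mu,\mu)=1$, while for $\gamma<\mu$ we have $\P(\gamma,\mu)\in v^{-1}\Z[v^{-1}]$, so
\[
\P(\gamma,\mu)=\Mu_{-1}(\gamma,\mu)v^{-1}+\Mu_{-2}(\gamma,\mu)v^{-2}+\sum_{k\ge 3}\Mu_{-k}(\gamma,\mu)v^{-k}.
\]

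Next I would handle the two cases. When $\gamma=\mu$ the product equals $m_\kappa(\mu,\lambda)$ itself, and its positive part (nonnegative powers of $v$) is obtained by keeping the $v^0$ term from $m_{\kappa,0}$, the $v^1$ term from $m_{\kappa,1}(v+v^{-1})$, and the $v^2$ term from $m_{\kappa,2}(v^2+v^{-2})$, which gives exactly $m_{\kappa,0}+m_{\kappa,1}v+m_{\kappa,2}v^2$ as claimed.

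When $\gamma<\mu$ I would expand $\P(\gamma,\mu)\cdot m_\kappa(\mu,\lambda)$ and track which cross terms produce nonnegative powers of $v$. Since every monomial in $\P(\gamma,\mu)$ has degree $\le -1$, only the $v^1$ and $v^2$ terms appearing in $m_\kappa$ can contribute. Specifically, the $v$-term from $m_{\kappa,1}(v+v^{-1})$ hits $\Mu_{-1}v^{-1}$ to give the constant $\Mu_{-1}(\gamma,\mu)m_{\kappa,1}(\mu,\lambda)$, and the $v^2$-term from $m_{\kappa,2}(v^2+v^{-2})$ hits $\Mu_{-1}v^{-1}$ and $\Mu_{-2}v^{-2}$ to give $\Mu_{-1}(\gamma,\mu)m_{\kappa,2}(\mu,\lambda)v$ and the constant $\Mu_{-2}(\gamma,\mu)m_{\kappa,2}(\mu,\lambda)$, respectively. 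All other cross terms have degree $\le -1$. Collecting these yields the stated formula.

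There is no real obstacle here: once the shapes are written out, the lemma is a bookkeeping exercise in multiplying a Laurent polynomial supported in degrees $\le -1$ by one supported symmetrically in $\{-2,-1,0,1,2\}$. The only care needed is to remember that the degree-$2$ part of $m_\kappa$ is absent when $\ell(\kappa)\le 2$ and the degree-$1$ part is absent when $\ell(\kappa)=1$, in which cases the stated formulas remain correct because the missing $m_{\kappa,i}$ are simply zero.
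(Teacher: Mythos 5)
Your proof is correct and is essentially the argument the paper has in mind; the paper states this lemma without an explicit proof, treating it as immediate from $\P(\mu,\mu)=1$, $\P(\gamma,\mu)\in v^{-1}\Z[v^{-1}]$ for $\gamma<\mu$, and the self-dual form $m_\kappa(\mu,\lambda)=m_{\kappa,0}+m_{\kappa,1}(v+v^{-1})+m_{\kappa,2}(v^2+v^{-2})$ from Lemma \ref{l:sumtau}. Your bookkeeping of which cross terms land in nonnegative degree is exactly the content of the assertion.
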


\begin{lemma}
\label{l:length1}
If $\ell(\kappa)=1$ then
\begin{equation}
m_\kappa(\gamma,\lambda)=m_{\kappa,0}(\gamma,\lambda)=
\begin{cases}
1&\kappaarrow\gamma\lambda\\
\Mu_{-1}(\gamma,\lambda)&\gamma<\lambda\\
0&\text{else}
\end{cases}
\end{equation}
\end{lemma}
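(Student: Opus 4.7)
The plan is to extract $m_\kappa(\gamma,\lambda)$ from the basic identity \eqref{e:recursion3} by comparing coefficients in appropriate degrees. By Lemma \ref{l:sumtau}, when $\ell(\kappa)=1$ we have $m_\kappa(\mu,\lambda)=m_{\kappa,0}(\mu,\lambda)\in\Z$, a pure integer. Recall that $\P(\gamma,\gamma)=1$ and $\P(\gamma,\mu)\in v\inv\Z[v\inv]$ for $\gamma<\mu$. Hence in the sum
$$\sum_{\mu|\kappa\in\tau(\mu)}\P(\gamma,\mu)m_\kappa(\mu,\lambda),$$
only the term $\mu=\gamma$ can contribute to the part of degree $\ge0$ in $v$; all other terms lie in $v\inv\Z[v\inv]$. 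Therefore the ``degree $\ge 0$'' part of the left side of \eqref{e:recursion3} equals the constant $m_\kappa(\gamma,\lambda)$.

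Next I would extract the degree $\ge 0$ part of the right side of \eqref{e:recursion3} using Table \ref{table:condensed3} (with $k=1$), going case by case on $t_\gamma(\kappa)\in\{${\tt 1C-, 1r1f, 1r1s, 1r2, 1ic}$\}$. Since $\kappa\in\tau(\gamma)$ but $\kappa\notin\tau(\lambda)$ we have $\gamma\ne\lambda$, so $\P(\gamma,\lambda)\in v\inv\Z[v\inv]$; consequently $v\P(\gamma,\lambda)$ contributes the constant $\Mu_{-1}(\gamma,\lambda)$ and no higher-degree term, while $v\inv\P(\cdot,\lambda)$ contributes nothing of degree $\ge 0$. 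The ``bare'' terms $\P(w_\kappa\times\gamma,\lambda)$, $\P(\gamma^i_\kappa,\lambda)$, $\P(\gamma_\kappa,\lambda)$ contribute $1$ to the constant part precisely when their first argument equals $\lambda$, and $0$ otherwise. Summing up, one obtains in each case:
\begin{align*}
\text{\tt 1C-:}\quad & m_\kappa(\gamma,\lambda)=\Mu_{-1}(\gamma,\lambda)+[w_\kappa\times\gamma=\lambda],\\
\text{\tt 1r1f:}\quad & m_\kappa(\gamma,\lambda)=\Mu_{-1}(\gamma,\lambda)+[\gamma^1_\kappa=\lambda]+[\gamma^2_\kappa=\lambda],\\
\text{\tt 1r2:}\quad & m_\kappa(\gamma,\lambda)=\Mu_{-1}(\gamma,\lambda)+[\gamma_\kappa=\lambda],\\
\text{\tt 1r1s, 1ic:}\quad & m_\kappa(\gamma,\lambda)=\Mu_{-1}(\gamma,\lambda).
\end{align*}

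Finally I would match these with the three cases of the Lemma. In each descent type, the elements $\lambda$ with $\kappaarrow\gamma\lambda$ are precisely the ``bracket'' targets: $w_\kappa\times\gamma$ for {\tt 1C-}, $\gamma^1_\kappa,\gamma^2_\kappa$ for {\tt 1r1f}, $\gamma_\kappa$ for {\tt 1r2}, and none at all for {\tt 1r1s, 1ic}. In every case these targets satisfy $\ell(\lambda)=\ell(\gamma)-1<\ell(\gamma)$, so $\P(\gamma,\lambda)=0$ and hence $\Mu_{-1}(\gamma,\lambda)=0$; the formula then collapses to $m_\kappa=1$, matching case~(1) of the Lemma. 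If $\kappaarrow\gamma\lambda$ fails, all the bracket indicators vanish (they would force $\kappaarrow\gamma\lambda$), so $m_\kappa=\Mu_{-1}(\gamma,\lambda)$, which is $0$ unless $\gamma<\lambda$. This gives cases~(2) and~(3). The only nontrivial bookkeeping is in case {\tt 1r2}: one must also check that the term $-v\inv\P(w_\kappa\times\gamma,\lambda)$ cannot drop a degree-$0$ contribution, which is immediate because $\P(w_\kappa\times\gamma,\lambda)$ is either $1$ (when $w_\kappa\times\gamma=\lambda$, and then it is multiplied by $v\inv$) or lies in $v\inv\Z[v\inv]$. The main obstacle is simply the case-by-case nature of the verification; each case reduces to checking one indicator identity and one length-parity observation.
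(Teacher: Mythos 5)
Your argument is exactly the paper's: compare the nonnegative-degree parts of both sides of \eqref{e:recursion3}, observe that on the left only the $\mu=\gamma$ term survives (since $\P(\gamma,\mu)\in v\inv\Z[v\inv]$ for $\mu>\gamma$), read off the right side case-by-case from Table \ref{table:condensed3} with $k=1$, and note the indicator terms are nonzero exactly when $\kappaarrow\gamma\lambda$, in which case $\Mu_{-1}(\gamma,\lambda)=0$. The bookkeeping details you flag (the $-v\inv\P(w_\kappa\times\gamma,\lambda)$ term in case {\tt 1r2}, the parity/length remark) are handled implicitly in the paper but match.
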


\begin{proof}
Since $\ell(\kappa)=1$, $m_\kappa(\mu,\lambda)=m_{\kappa,0}(\mu,\lambda)$ for
all $\mu$, and
on the left hand side of 
\eqref{e:recursion3}  the maximal degree is $0$.
In each term on the right hand side,
$[v\P(\gamma,\lambda)]^+=\Mu_{-1}(\gamma,\lambda)$
and $[v\inv\P(\gamma,\lambda)]^+=0$.
On the other hand a term
$\P(\mu,w_\kappa\times\gamma),\P(\mu,\gamma_\kappa)$  or
$\P(\mu,\gamma_\kappa^i)$ contributes $1$ if 
and only if the two arguments are equal. 
So, $[\quad]^+$ of both 
sides gives (the last column is $t_\gamma(\kappa)$):
\begin{equation}
m_\kappa(\gamma,\lambda)=\Mu_{-1}(\gamma,\lambda)+
\begin{cases}
\delta_{w_\kappa\times\gamma,\lambda}&{\tt 1C-}\\
\delta_{\gamma_\kappa^,\lambda}+\delta_{\gamma_\kappa^2,\lambda}&{\tt 1r1f}\\
0&{\tt 1r1s}\\
\delta_{\gamma_\kappa,\lambda}&{\tt 1r2}\\
0&{\tt 1ic}
\end{cases}
\end{equation}
Each Kronecker $\delta$ after the brace is
non-zero precisely when 
$\kappaarrow\gamma\lambda$, in which case $\gamma>\lambda$,
so $\Mu_{-1}(\gamma,\lambda)=0$.
\end{proof}

This proves Cases (1) $(\ell(\kappa)=1)$ and (2) of Theorem \ref{t:T_kappa}.
Note that $m_\kappa(\gamma,\lambda)=0$ unless $\kappaarrow\gamma\lambda$ or $\gamma<\lambda$.

\subsec{$\ell(\kappa)=2$}

Take the $+$ part of both sides of \eqref{e:recursion3}. The left hand
side is
\begin{subequations}
\renewcommand{\theequation}{\theparentequation)(\alph{equation}}  
\begin{equation}
[m_{\kappa,0}(\gamma,\lambda)+
\sum_{\mu|\kappa\in\tau(\mu)} \Mu_{-1}(\gamma,\mu)m_{\kappa,1}(\mu,\lambda)]+m_{\kappa,1}(\gamma,\lambda)v
\end{equation}
The first and last terms are from $\mu=\gamma$, and the second sum is from all other terms $\mu\ne\gamma$. 
(Note that the summand is $0$ if $\mu=\gamma$.) 

The right hand side is
\begin{equation}
\begin{aligned}
&\Mu_{-2}(\gamma,\lambda)+\Mu_{-1}(\gamma,\lambda)v+
\begin{cases}
\delta_{w_\kappa\times\gamma,\lambda}&{\tt 2C-}\\
\Mu_{-1}(\gamma_\kappa,\lambda)+\delta_{\gamma_\kappa,\lambda}v&{\tt 2Cr}\\
\delta_{\gamma_{\kappa},\lambda}&{\tt 2r22}\\
\displaystyle\sum_{\gamma'|\kappaarrow\gamma{\gamma'}}\epsilon(\gamma,\gamma')\delta_{\gamma',\lambda}
&{\tt 2r21}\\
\delta_{\gamma_\kappa^1,\lambda}
+
\delta_{\gamma_\kappa^2,\lambda}&{\tt 2r11}\\
0&2ic
\end{cases}
\end{aligned}
\end{equation}

Equating the coefficient of $v$ in (a) and (b) gives
\begin{equation}
m_{\kappa,1}(\gamma,\lambda)=
\begin{cases}
\Mu_{-1}(\gamma,\lambda)&t_\gamma(\kappa)\ne {\tt 2Cr}\\
\Mu_{-1}(\gamma,\lambda)&t_\gamma(\kappa)={\tt 2Cr},\text{ and
}\gamma<\lambda\\
1&t_\gamma(\kappa)={\tt 2Cr},\text{ and }\kappaarrow\gamma\lambda\\
0&t_\gamma(\kappa)={\tt 2Cr},\text{ otherwise }
\end{cases}
\end{equation}
We can rewrite this
\begin{equation}
\label{e:m1gammadeltacase2}\boxed{
m_{\kappa,1}(\gamma,\lambda)=\Mu_{-1}(\gamma,\lambda)+
\begin{cases}
\delta_{\gamma_\kappa,\lambda}&t_\gamma(\kappa)={\tt 2Cr}\\
0&\text{otherwise}
\end{cases}}
\end{equation}
In particular $m_{\kappa,1}(\gamma,\lambda)=0$ unless $\gamma<\lambda$ or
$\kappaarrow\gamma\lambda$. 

Now (d) holds for all $\gamma$ with $\kappa\in\tau(\gamma)$, so we can
apply it to all $\gamma$ occuring the sum in \eqref{e:recursion3}.

So plug this back in to  (a),  keep only the constant term, 
and set this equal to the constant term of (b)
\begin{equation}
\begin{aligned}
m_{\kappa,0}(\gamma,\lambda)+\sum_{\mu|\kappa\in\tau(\mu)}&
\Mu_{-1}(\gamma,\mu)\bigg[\Mu_{-1}(\mu,\lambda)+
\begin{cases}
\delta_{\mu_\kappa,\lambda}&t_\mu(\kappa)={\tt 2Cr}\\
0&\text{else}  
\end{cases}
\bigg]
=\\
&\Mu_{-2}(\gamma,\lambda)+
\begin{cases}
\delta_{w_\kappa\times\gamma,\lambda}&{\tt2C-}\\
\Mu_{-1}(\gamma_\kappa,\lambda)&{\tt 2Cr}\\
\delta_{\gamma_{\kappa},\lambda}&{\tt 2r22-}\\
\displaystyle\sum_{\gamma'|\kappaarrow\gamma{\gamma'}}\epsilon(\gamma,\gamma')\delta_{\gamma',\lambda}&{\tt 2r21}\\
\delta_{\gamma_\kappa^1,\lambda}
+
\delta_{\gamma_\kappa^2,\lambda}&{\tt 2r11}\\
0&{\tt 2ic}
\end{cases}
\end{aligned}
\end{equation}
\end{subequations}

Note that each Kronecker $\delta$ after the brace  is $1$ iff
$\kappaarrow\gamma\lambda$.
Also we can put $\epsilon(\gamma,\lambda)$ in front of each such term
without 
changing anything (these are $1$ unless $t_\gamma(\kappa)=${\tt 2r21}).
Therefore
\begin{equation}
\begin{aligned}
m_{\kappa,0}(\gamma,\lambda)&=\Mu_{-2}(\gamma,\lambda)\\
&-\sum_\mu\Mu_{-1}(\gamma,\mu)\Mu_{-1}(\mu,\lambda)\\
&-
\sum_\mu
\Mu_{-1}(\gamma,\mu)
*
\begin{cases}
\delta_{\mu_\kappa,\delta}&t_\mu(\kappa)={\tt 2Cr}\\
0&\text{else}  
\end{cases}\\
&+\begin{cases}
\Mu_{-1}(\gamma_\kappa,\lambda)&t_\gamma(\kappa)={\tt 2Cr}\\
\epsilon(\gamma,\lambda)&\kappaarrow\gamma\lambda, t_\gamma(\kappa)\ne{\tt 2Cr}\\
0&\text{otherwise}  
\end{cases}
\end{aligned}
\end{equation}
Then

\newpage

\begin{equation}
\sum_\mu
\Mu_{-1}(\gamma,\mu)
*
\begin{cases}
\delta_{\mu_\kappa,\lambda}&t_\mu(\kappa)={\tt 2Cr}\\
0&\text{else}  
\end{cases}
\end{equation}
is equal to
\begin{equation}
\begin{aligned}
\begin{cases}
\Mu_{-1}(\gamma,\lambda^\kappa)&t_\lambda(\kappa)={\tt 2Ci}\\
0&\text{else}
\end{cases}
\end{aligned}
\end{equation}

\begin{lemma}
\label{l:length2}
Assume $\kappa\in\tau(\gamma), \kappa\not\in\tau(\lambda)$, $\ell(\kappa)=2$. 
Then
\begin{subequations}
\renewcommand{\theequation}{\theparentequation)(\alph{equation}}  
\begin{equation}
\boxed{\begin{aligned}
m_{\kappa,0}(\gamma,\lambda)&=\Mu_{-2}(\gamma,\lambda)
-\sum_{\substack{\mu|\kappa\in\tau(\mu)}}
\Mu_{-1}(\gamma,\mu)\Mu_{-1}(\mu,\lambda)\\
&-
\begin{cases}
\Mu_{-1}(\gamma,\lambda^\kappa)&t_\lambda(\kappa)={\tt 2Ci}\\
0&\text{else}
\end{cases}\\
&+\begin{cases}
\Mu_{-1}(\gamma_\kappa,\lambda)&t_\gamma(\kappa)={\tt 2Cr}\\
\epsilon(\gamma,\lambda)&\kappaarrow\gamma\lambda, t_\gamma(\kappa)\ne {\tt 2Cr}\\
0&\text{else}  
\end{cases}
\end{aligned}}
\end{equation}

and

\begin{equation}
\boxed{
m_{\kappa,1}(\gamma,\lambda)=\Mu_{-1}(\gamma,\lambda)+
\begin{cases}
\delta_{\gamma_\kappa,\lambda}&t_\gamma(\kappa)={\tt 2Cr}\\
0&\text{otherwise}
\end{cases}}
\end{equation}
\end{subequations}
\end{lemma}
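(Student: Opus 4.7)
The plan is to apply the basic identity \eqref{e:basicidentity} at $\ell(\kappa)=2$ and solve for the two unknown coefficients $m_{\kappa,0}(\gamma,\lambda)$ and $m_{\kappa,1}(\gamma,\lambda)$ by matching degrees in $v$. By Lemma \ref{l:sumtau} we have $m_\kappa(\mu,\lambda) = m_{\kappa,0}(\mu,\lambda) + m_{\kappa,1}(\mu,\lambda)(v+v\inv)$, so on the left hand side of \eqref{e:recursion3} the maximal positive power of $v$ is $v^1$. Since $\P(\gamma,\mu) \in v\inv\Z[v\inv]$ whenever $\gamma \neq \mu$, the coefficient of $v^1$ on the LHS is exactly $m_{\kappa,1}(\gamma,\lambda)$, coming solely from the $\mu=\gamma$ summand, while the constant term is $m_{\kappa,0}(\gamma,\lambda) + \sum_{\mu \neq \gamma, \kappa \in \tau(\mu)} \Mu_{-1}(\gamma,\mu) m_{\kappa,1}(\mu,\lambda)$.

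First I would read off the $+$-part of the right hand side of \eqref{e:recursion3} from Table \ref{table:condensed3}, case by case on $t_\gamma(\kappa)$. Contributions to the $v^1$ coefficient come from the leading $v^2\P(\gamma,\lambda)$ term (producing $\Mu_{-1}(\gamma,\lambda)$) and, in the $\text{2Cr}$ row only, from the additional $(v+v\inv)\P(\gamma_\kappa,\lambda)$ (producing $\delta_{\gamma_\kappa,\lambda}$). This immediately yields (b) of the Lemma, since whenever $\kappaarrow\gamma\lambda$ one has $\gamma>\lambda$ and so $\Mu_{-1}(\gamma,\lambda)=0$, ensuring the two contributions never overlap.

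Next I would substitute the formula (b) for $m_{\kappa,1}(\mu,\lambda)$ into the constant-term equation. The product $\Mu_{-1}(\gamma,\mu)m_{\kappa,1}(\mu,\lambda)$ splits into the main piece $\Mu_{-1}(\gamma,\mu)\Mu_{-1}(\mu,\lambda)$ plus a correction $\sum_{\mu: t_\mu(\kappa)=\text{2Cr}} \Mu_{-1}(\gamma,\mu)\delta_{\mu_\kappa,\lambda}$. The correction sum is nonzero only when there exists $\mu$ with $\mu_\kappa = \lambda$, which forces $t_\lambda(\kappa) = \text{2Ci}$; the unique such $\mu$ is then $\lambda^\kappa$, and the correction collapses to $\Mu_{-1}(\gamma,\lambda^\kappa)$. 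Rearranging gives (a), with the Kronecker deltas on the RHS of \eqref{e:recursion3} (carrying the signs $\epsilon(\gamma,\lambda)$ in the $\text{2r21}$ subcase) assembling into the final braced piece and the $\text{2Cr}$ term $\Mu_{-1}(\gamma_\kappa,\lambda)$ appearing directly.

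The main obstacle is bookkeeping across the six types $t_\gamma(\kappa) \in \{\text{2C-},\text{2Cr},\text{2r22},\text{2r21},\text{2r11},\text{2ic}\}$: one has to verify that the various Kronecker-$\delta$ contributions on the RHS collapse cleanly into either the $\kappaarrow\gamma\lambda$ branch or the $\text{2Cr}$ branch of the boxed formula. A secondary subtlety is that the sum $\sum_{\mu: t_\mu(\kappa)=\text{2Cr}} \Mu_{-1}(\gamma,\mu)\delta_{\mu_\kappa,\lambda}$ could a priori involve more than one term; I must check that the map $\mu \mapsto \mu_\kappa$ restricted to $\text{2Cr}$ elements is injective (which follows from the definition of the $\text{2Cr}$ Cayley transform) and that its image is precisely the set of $\lambda$ with $t_\lambda(\kappa) = \text{2Ci}$, which is the duality between ascent and descent under iterated Cayley transform.
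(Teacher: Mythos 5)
Your proposal is correct and takes essentially the same approach as the paper's proof: apply the basic identity \eqref{e:recursion3}, take the $+$-part of both sides, read the $v^{1}$ coefficient from Table \ref{table:condensed3} to obtain $m_{\kappa,1}$, then substitute back and read the constant term to obtain $m_{\kappa,0}$, with the $\text{\tt 2Cr}$ correction in the sum $\sum_\mu \Mu_{-1}(\gamma,\mu)\,\delta_{\mu_\kappa,\lambda}$ collapsing to $\Mu_{-1}(\gamma,\lambda^\kappa)$ precisely when $t_\lambda(\kappa)=\text{\tt 2Ci}$.
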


Let's look at some cases.
First assume $\kappaarrow\gamma\lambda$. In particular
$\gamma>\delta$, so the first two terms are $0$. 
A little checking gives
\begin{equation}
m_{\kappa,0}(\gamma,\lambda)=
\begin{cases}
\epsilon(\lambda,\gamma)&t_\lambda(\kappa)\ne {\tt 2Ci}\\  
0&t_\lambda(\kappa)={\tt 2Ci}\\  
\end{cases}
\end{equation}
Putting this together with formula \eqref{e:m1gammadeltacase2} for
$m_{\kappa,1}$ we get:
\begin{equation}
\boxed{
\kappaarrow\gamma\lambda\Rightarrow
m_\kappa(\gamma,\lambda)=
\begin{cases}
v+v\inv&t_\lambda(\kappa)={\tt 2Cr}\\
\epsilon(\gamma,\lambda)&else  
\end{cases}
}
\end{equation}

Assume $\gamma\overset\kappa{\not\rightarrow}\lambda$. We see:
$$
\begin{aligned}
m_\kappa(\gamma,\lambda)&=
\Mu_{-2}(\gamma,\lambda)-
\sum_{\substack{\mu|\kappa\in\tau(\mu)}}
\Mu_{-1}(\gamma,\mu)\Mu_{-1}(\mu,\lambda)
\\
&-
\begin{cases}
\Mu_{-1}(\gamma,\lambda^\kappa)&t_\lambda(\kappa)={\tt 2Ci}\\
0&\text{else}  
\end{cases}
+
\begin{cases}
\Mu_{-1}(\gamma_\kappa,\lambda)&t_\gamma(\kappa)={\tt 2Cr}\\
0&\text{else}  
\end{cases}\\
&+\Mu_{-1}(\gamma,\lambda)(v+v\inv)
\end{aligned}
$$

If $\ell(\lambda)\not\equiv\ell(\gamma)\mypmod2$
all terms but the last are $0$,
so
\begin{equation}
\boxed{
\gamma\overset\kappa{\not\rightarrow}\lambda,\ell(\gamma)\not\equiv \ell(\lambda)\mypmod 2\Rightarrow
m_\kappa(\gamma,\lambda)=\Mu_{-1}(\gamma,\lambda)(v+v\inv)
}\end{equation}
On the other hand $\ell(\gamma)=\ell(\lambda)\mypmod2$ implies
the last term is $0$, and
\begin{equation}
\boxed{\begin{aligned}
m_\kappa(\gamma,\lambda)&=
\Mu_{-2}(\gamma,\lambda)-
\sum_{\substack{\mu|\kappa\in\tau(\mu)}}
\Mu_{-1}(\gamma,\mu)\Mu_{-1}(\mu,\lambda)
\\
&-
\begin{cases}
\Mu_{-1}(\gamma,\lambda^\kappa)&t_\lambda(\kappa)={\tt 2Ci}\\
0&\text{else}  
\end{cases}
+
\begin{cases}
\Mu_{-1}(\gamma_\kappa,\lambda)&t_\gamma(\kappa)={\tt 2Cr}\\
0&\text{else}  
\end{cases}
\end{aligned}}
\end{equation}

I believe this agrees with \cite[Theorem 4.4]{lv2012a}.

Note that all terms of $m_\kappa(\gamma,\delta)$ are $0$ unless $\kappaarrow\gamma\delta$ or
$\gamma<\delta$, except possibly the last two. 

\subsec{$\ell(\kappa)=3$}
We continue to assume
$\kappa\in\tau(\gamma), \kappa\not\in\tau(\lambda)$. In particular
$\gamma\ne\lambda$.

Take the $+$ part of both sides of \eqref{e:recursion3}. The left hand
side is:
\begin{subequations}
\renewcommand{\theequation}{\theparentequation)(\alph{equation}}  
\label{e:3main}
\begin{equation}
\begin{aligned}
&[m_{\kappa,0}(\gamma,\lambda)+
\sum_{\mu|\kappa\in\tau(\mu)}
\Mu_{-1}(\gamma,\mu)m_{\kappa,1}(\mu,\lambda)
+\sum_{\mu|\kappa\in\tau(\mu)} \Mu_{-2}(\gamma,\mu)m_{\kappa,2}(\mu,\lambda)]\\
&+[m_{\kappa,1}(\gamma,\lambda)+
\sum_{\mu|\kappa\in\tau(\mu)} \Mu_{-1}(\gamma,\mu)m_{\kappa,2}(\mu,\lambda)]v+m_{\kappa,2}(\gamma,\lambda)v^2
\end{aligned}
\end{equation}

The right hand side is
\begin{equation}
\begin{aligned}
\Mu_{-3}(\gamma,\lambda)+&\Mu_{-2}(\gamma,\lambda)v+
\Mu_{-1}(\gamma,\lambda)v^2+
\begin{cases}
\delta_{w_\kappa\times\gamma,\lambda}&{\tt 3C-}\\
\Mu_{-1}(\gamma_\kappa,\lambda)+\delta_{\gamma_\kappa,\lambda}v&{\tt 3Cr}\\
\Mu_{-1}(\gamma_\kappa,\lambda)+\delta_{\gamma_\kappa,\lambda}v&{\tt 3r}\\
0&3ic
\end{cases}
\end{aligned}
\end{equation}
Comparing the coefficient of $v^2$ gives
\begin{equation}
\boxed{m_{\kappa,2}(\gamma,\lambda)=\Mu_{-1}(\gamma,\lambda)}
\end{equation}
Plugging this in to \eqref{e:3main}, the coefficient of $v$ gives
\begin{equation}
m_{\kappa,1}(\gamma,\lambda)+\sum_{\mu|\kappa\in\tau(\mu)} \Mu_{-1}(\gamma,\mu)\Mu_{-1}(\mu,\lambda)
=
\Mu_{-2}(\gamma,\lambda)+
\begin{cases}
0&{\tt 3C-}\\
\delta_{\gamma_\kappa,\lambda}&{\tt 3Cr}\\
\delta_{\gamma_\kappa,\lambda}&{\tt 3r}\\
0&3ic
\end{cases}
\end{equation}
i.e.
\begin{equation}
\boxed{m_{\kappa,1}(\gamma,\lambda)=
\Mu_{-2}(\gamma,\lambda)-\sum_{\mu|\kappa\in\tau(\mu)}
 \Mu_{-1}(\gamma,\mu)\Mu_{-1}(\mu,\lambda)
+
\begin{cases}
0&t_\gamma(\kappa)={\tt3C-,3ic}\\
\delta_{\gamma_\kappa,\lambda}&t_\gamma(\kappa)={\tt 3Cr,3r}\\
\end{cases}}
\end{equation}

Turn the crank one more time, plugging this in, to compute  the constant term:
\begin{equation}
\begin{aligned}
m_{\kappa,0}(\gamma,\lambda)&=
-\sum_{\mu|\kappa\in\tau(\mu)}
\Mu_{-1}(\gamma,\mu)m_{\kappa,1}(\mu,\lambda)
-
\sum_{\mu|\kappa\in\tau(\mu)}
\Mu_{-2}(\gamma,\mu)m_{\kappa,2}(\mu,\lambda)+\\
&\Mu_{-3}(\gamma,\lambda)+
\begin{cases}
\delta_{w_\kappa\times\gamma,\lambda}&{\tt 3C-}\\
\Mu_{-1}(\gamma_\kappa,\lambda)&{\tt 3Cr}\\
\Mu_{-1}(\gamma_\kappa,\lambda)&{\tt 3r}\\
0&3ic
\end{cases}
\end{aligned}
\end{equation}
Plug in (c) and (e):
\begin{equation}
\begin{aligned}
&m_{\kappa,0}(\gamma,\lambda)=\\
-\sum_{\mu|\kappa\in\tau(\mu)}
&\Mu_{-1}(\gamma,\mu)
\Bigg[\Mu_{-2}(\mu,\lambda)-\sum_{\phi|\kappa\in\tau(\phi)}
\Mu_{-1}(\mu,\phi)\Mu_{-1}(\phi,\lambda)\\
+
&\begin{cases}
0&t_\lambda(\kappa)=3C-,{\tt 3ic}\\
\delta_{\lambda_\kappa,\lambda}&t_\lambda(\kappa)={\tt 3Cr, 3r}\\
\end{cases}\Bigg]
\\
-\bigg[\sum_{\mu|\kappa\in\tau(\mu)}
&\Mu_{-2}(\gamma,\mu)\Mu_{-1}(\mu,\lambda)\bigg]+
\Mu_{-3}(\gamma,\lambda)+
\begin{cases}
\delta_{w_\kappa\times\gamma,\lambda}&t_\gamma(\kappa)={\tt 3C-}\\
\Mu_{-1}(\gamma_\kappa,\lambda)&t_\gamma(\kappa)={\tt 3Cr}\\
\Mu_{-1}(\gamma_\kappa,\lambda)&t_\gamma(\kappa)={\tt 3r}\\
0&t_\gamma(\kappa)=3ic
\end{cases}
\end{aligned}
\end{equation}
Note that if $t_\gamma(\kappa)=3C-$,  $\delta_{w_\kappa\times\gamma,\lambda}=1$ 
if $\kappaarrow\gamma\lambda$, and $0$ otherwise.
Also, evaluating
\begin{equation}
\sum_\mu\Mu_{-1}(\gamma,\mu)*
\begin{cases}
0&t_\lambda(\kappa)=3C-,{\tt 3ic}\\
\delta_{\lambda_\kappa,\lambda}&t_\lambda(\kappa)={\tt 3Cr,  3r}\\
\end{cases}
\end{equation}
as in the length $2$ case gives 
\begin{equation}
\begin{cases}
\Mu_{-1}(\gamma,\lambda^\kappa)&t_\lambda(\kappa)=3Ci\text{ or }{\tt 3i}\\
0&else
\end{cases}
\end{equation}
Inserting this information, moving a few terms around, and taking
$\lambda<\lambda$ in all sums as in the previous cases, gives

\begin{equation}\boxed{
\begin{aligned}
m_{\kappa,0}(\gamma,\lambda)&=\Mu_{-3}(\gamma,\lambda)\\
&+\sum_{\substack{\mu|\kappa\in\tau(\mu)\\\phi|\kappa\in\tau(\phi)}}
\Mu_{-1}(\gamma,\mu)
\Mu_{-1}(\mu,\phi)\Mu_{-1}(\phi,\lambda)+\\
&-
\sum_{\mu|\kappa\in\tau(\mu)}
\big[
\Mu_{-1}(\gamma,\mu)\Mu_{-2}(\mu,\lambda)+
\Mu_{-2}(\gamma,\mu)\Mu_{-1}(\mu,\lambda)\big]\\
\\
&-\begin{cases}
\Mu_{-1}(\gamma,\lambda^\kappa)&t_\lambda(\kappa)={\tt 3Ci}\text{ or }{\tt 3i}\\
0&else
\end{cases}\\
&
+
\begin{cases}
1&t_\gamma(\kappa)={\tt 3C-}, \kappaarrow\gamma\lambda\\
0&t_\gamma(\kappa)={\tt 3C-}, \gamma\overset\kappa{\not\rightarrow}\lambda\\
\Mu_{-1}(\gamma_\kappa,\lambda)&t_\gamma(\kappa)={\tt 3Cr}\\
\Mu_{-1}(\gamma_\kappa,\lambda)&t_\gamma(\kappa)={\tt 3r}\\
0&t_\gamma(\kappa)={\tt 3ic}
\end{cases}
\end{aligned}}
\end{equation}
\end{subequations}
Summarizing the length 3 case:

\begin{lemma}
\label{l:length3}
Assume $\kappa\in\tau(\gamma), \kappa\not\in\tau(\lambda)$, $\ell(\kappa)=3$. 
Then $m_{\kappa,2},m_{\kappa,1},m_{\kappa,0}$ are given by 
\ref{e:3main}(c),(e), and (j), respectively.


\end{lemma}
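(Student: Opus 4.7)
The plan is to mimic the strategy already used for $\ell(\kappa)=1$ (Lemma~\ref{l:length1}) and $\ell(\kappa)=2$ (Lemma~\ref{l:length2}): start from the master identity \eqref{e:recursion3}, apply $(\cdot)^+$ to both sides, and then extract $m_{\kappa,2}$, $m_{\kappa,1}$, $m_{\kappa,0}$ in turn by comparing coefficients of $v^2$, $v^1$, $v^0$. The expression for $[f]^+$ of each summand $\P(\gamma,\mu)m_\kappa(\mu,\lambda)$ is already packaged in the Lemma stated just before Lemma~\ref{l:length1}: for $\mu=\gamma$ the contribution is $m_{\kappa,0}(\gamma,\lambda)+m_{\kappa,1}(\gamma,\lambda)v+m_{\kappa,2}(\gamma,\lambda)v^2$, and for $\mu>\gamma$ it is $[\Mu_{-2}(\gamma,\mu)m_{\kappa,2}(\mu,\lambda)+\Mu_{-1}(\gamma,\mu)m_{\kappa,1}(\mu,\lambda)]+\Mu_{-1}(\gamma,\mu)m_{\kappa,2}(\mu,\lambda)v$. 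The right-hand side is read off from Table~\ref{table:condensed3} in the $\ell(\kappa)=3$ rows, namely {\tt 3C-}, {\tt 3Cr}, {\tt 3r}, {\tt 3ic}, all expanded in non-negative powers of $v$ using $[v^3\P(\gamma,\lambda)]^+=\Mu_{-1}(\gamma,\lambda)v^2+\Mu_{-2}(\gamma,\lambda)v+\Mu_{-3}(\gamma,\lambda)$, while $[v^{-3}\P(\cdot,\lambda)]^+=0$.

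First I would extract $m_{\kappa,2}(\gamma,\lambda)$: the only contribution to the $v^2$ coefficient on the left comes from the diagonal term $\mu=\gamma$, so equating gives immediately $m_{\kappa,2}(\gamma,\lambda)=\Mu_{-1}(\gamma,\lambda)$, which is formula \eqref{e:3main}(c). Next, I would equate the coefficients of $v^1$: the left gives $m_{\kappa,1}(\gamma,\lambda)+\sum_{\mu|\kappa\in\tau(\mu)}\Mu_{-1}(\gamma,\mu)m_{\kappa,2}(\mu,\lambda)$, and after substituting the formula for $m_{\kappa,2}$ already obtained, the inner sum becomes $\sum_\mu\Mu_{-1}(\gamma,\mu)\Mu_{-1}(\mu,\lambda)$. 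On the right, the $v^1$ terms are $\Mu_{-2}(\gamma,\lambda)$ plus $\delta_{\gamma_\kappa,\lambda}$ in the {\tt 3Cr}/{\tt 3r} cases only, producing \eqref{e:3main}(e).

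Finally I would extract $m_{\kappa,0}(\gamma,\lambda)$ by equating constant terms. The left-hand side contains $m_{\kappa,0}(\gamma,\lambda)$ plus two sums: $\sum_\mu\Mu_{-1}(\gamma,\mu)m_{\kappa,1}(\mu,\lambda)$ and $\sum_\mu\Mu_{-2}(\gamma,\mu)m_{\kappa,2}(\mu,\lambda)$. Substituting the formulas for $m_{\kappa,1}$ and $m_{\kappa,2}$ generates a triple sum involving $\Mu_{-1}(\gamma,\mu)\Mu_{-1}(\mu,\phi)\Mu_{-1}(\phi,\lambda)$, two mixed double sums $\Mu_{-1}(\gamma,\mu)\Mu_{-2}(\mu,\lambda)$ and $\Mu_{-2}(\gamma,\mu)\Mu_{-1}(\mu,\lambda)$, and one residual $\sum_\mu\Mu_{-1}(\gamma,\mu)\delta_{\mu_\kappa,\lambda}$. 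The right-hand constant term is $\Mu_{-3}(\gamma,\lambda)$ plus a case-dependent piece. The key bookkeeping step, exactly as in the length~2 argument, is to recognize that $\sum_{\mu:t_\mu(\kappa)={\tt 3Cr/3r}}\Mu_{-1}(\gamma,\mu)\delta_{\mu_\kappa,\lambda}$ equals $\Mu_{-1}(\gamma,\lambda^\kappa)$ precisely when $t_\lambda(\kappa)={\tt 3Ci}$ or {\tt 3i}, and vanishes otherwise. Collecting all terms on the correct side gives formula \eqref{e:3main}(j).

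The calculation is routine but bookkeeping-heavy; the main obstacle is organizing the many case-dependent pieces on the right-hand side of Table~\ref{table:condensed3} and correctly identifying which $\Mu_{-k}(\gamma,\lambda)$ terms cancel when $\ell(\gamma)\not\equiv\ell(\lambda)\pmod 2$ versus when they agree. I would also double-check that all sums can legitimately be taken over $\mu$ with $\kappa\in\tau(\mu)$ and $\gamma<\mu<\lambda$ (strict inequalities), by appealing to \eqref{e:lambdazero} and to $\P(\gamma,\mu)=0$ unless $\gamma\le\mu$, together with $\Mu_{-k}(\mu,\mu)=0$, so that the formal sums reduce to the sums actually appearing in the statement of Theorem~\ref{t:T_kappa}(4). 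No new ideas beyond those already used for $\ell(\kappa)=2$ are required.
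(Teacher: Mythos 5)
Your proposal follows the paper's own argument essentially step for step: take the $+$ part of the master identity \eqref{e:recursion3}, use the expression for $[\P(\gamma,\mu)m_\kappa(\mu,\lambda)]^+$ from the lemma preceding Lemma~\ref{l:length1} together with Table~\ref{table:condensed3} for the right-hand side, and then peel off $m_{\kappa,2}$, $m_{\kappa,1}$, $m_{\kappa,0}$ by comparing coefficients of $v^2$, $v^1$, $v^0$ in turn, substituting each formula back in before extracting the next. You also correctly identify the one nontrivial bookkeeping step, namely that $\sum_\mu\Mu_{-1}(\gamma,\mu)\delta_{\mu_\kappa,\lambda}$ collapses to $\Mu_{-1}(\gamma,\lambda^\kappa)$ exactly when $t_\lambda(\kappa)=${\tt 3Ci} or {\tt 3i}, which is precisely how the paper closes the argument; so this is the same proof, correctly reproduced.
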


\newpage

Let's look at some cases.

Suppose $\kappaarrow\gamma\lambda$. All $\Mu_{-i}$ terms are $0$, and 
\begin{equation}
\kappaarrow\gamma\lambda\Rightarrow
m_\kappa(\gamma,\lambda)=
\begin{cases}
  1&t_\gamma(\kappa)={\tt 3C-}\\
(v+v\inv)&t_\gamma(\kappa)={\tt 3Cr,  3r}\\
0&t_\gamma(\kappa)=3ic
\end{cases}
\end{equation}

Now assume $\gamma\overset\kappa{\not\rightarrow}\lambda$, and $\ell(\lambda)\equiv\ell(\gamma)\mypmod2$.
Then $m_{\kappa,2}(\gamma,\lambda)=m_{\kappa,0}(\gamma,\lambda)=0$.
The formula for $m_{\kappa,1}$ doesn't simplify, except that last term is $0$ since $\gamma\overset\kappa{\not\rightarrow}\lambda$, 
so
\begin{equation}
m_\kappa(\gamma,\lambda)=
\big[\Mu_{-2}(\gamma,\lambda)-\sum_{\mu|\kappa\in\tau(\mu)}
 \Mu_{-1}(\gamma,\mu)\Mu_{-1}(\mu,\lambda)\big](v+v\inv)
\end{equation}

Finally assume $\gamma\overset\kappa{\not\rightarrow}\lambda$, and $\ell(\lambda)\not\equiv\ell(\gamma)\mypmod2$.
Then $m_{\kappa,1}(\gamma,\lambda)=0$, and $m_{\kappa,0}$ simplifies a little, to give:
\begin{equation}
\begin{aligned}
m_\kappa(\gamma,\lambda)&=\Mu_{-1}(\gamma,\lambda)(v^2+v^{-2})+\Mu_{-1}(\gamma,\lambda)\\
&+\sum_{\substack{\mu|\kappa\in\tau(\mu)\\\phi|\kappa\in\tau(\phi)}}
\Mu_{-1}(\gamma,\mu)
\Mu_{-1}(\mu,\phi)\Mu_{-1}(\phi,\lambda)+\\
&-
\sum_{\mu|\kappa\in\tau(\mu)}
\big[
\Mu_{-1}(\gamma,\mu)\Mu_{-2}(\mu,\lambda)+
\Mu_{-2}(\gamma,\mu)\Mu_{-1}(\mu,\lambda)\big]\\
\\
&-\begin{cases}
\Mu_{-1}(\gamma,\lambda^\kappa)&t_\lambda(\kappa)=3Ci\text{ or }{\tt 3i}\\
0&else
\end{cases}\\
&
+
\begin{cases}
\Mu_{-1}(\gamma_\kappa,\lambda)&t_\gamma(\kappa)=3Cr\text{ or }{\tt 3r}\\
0&\text{else}
\end{cases}
\end{aligned}
\end{equation}

As in the length $2$ case, all terms are zero unless $\kappaarrow\gamma\lambda$ or 
$\gamma<\lambda$, except possibly the last two.



\sec{Appendix II: Some supplementary material}
\subsec{Explanation of the {\tt 1i2s/1r1s} cases}

This was originally a separate note on these cases. 

We recall some notation from \cite{lv2012b}. There is a space $\D$ of
parameters (Langlands parameters for $G$) with an action of $\sigma$.
There is a space of extended parameter $\wt\D$ for the extended
group $\Gext$. Each $\gamma\in\D^\sigma$ gives rise to 
two parameters $(\gamma,\pm)$ in $\wt\D$ (a $\sigma$-fixed representation
extends in two ways to the extended group). 
If $\sigma\gamma=\gamma'\ne\gamma$ then both $\gamma,\gamma'$ give 
a single parameter $(\gamma')=(\gamma)\in\wt\D$.
See \cite[Section 2.3]{lv2012b}; these are the elements 
$(\mathcal L,\pm q^k\beta^{\mathcal L})$ and
$(\mathcal L,q^{2k}\bold t^{\mathcal L})^\theta$. 

We have a Hecke algebra  $\H$, and an $\H$-module $\mathcal M$ with
basis $\{a_\mu\mid \mu\in \wt\D\}$. This is the module 
$\mathfrak K(\mathcal C)$ of \cite[Section 2.3]{lv2012b}. 
(It isn't entirely clear from \cite{lv2012b} that $\M$ carries an
action of $\H$, but David assures me this is so.)

This is {\it not} the main module $M$ of loc. cit., which is a
quotient of $\mathcal M$: $M$ has basis $\{a_\mu\mid \mu\in
\wt\D\}$ modulo relations:
\begin{subequations}
\renewcommand{\theequation}{\theparentequation)(\alph{equation}}  
\label{e:relations}
\begin{equation}
a_{(\gamma,\mp)}=-a_{(\gamma,\pm)}\quad (\gamma\in\D^\sigma)
\end{equation}
\begin{equation}
a_{(\gamma)}=0\quad(\gamma\in\D-\D^\sigma)
\end{equation}
\end{subequations}
See the discussion of the image of the homomorphism $\theta$, and the
definition of $M$, in 
\cite[Section 2.3]{lv2012b}.

Now suppose $\kappa=\{\alpha\}$ where $\alpha$ is a $\sigma$-fixed root,
$\gamma\in\D^\sigma$, and $\alpha$ is imaginary or real with respect to
$\gamma$. Associated to $\kappa$ is a Hecke operator $T_\kappa$.
The problem is to compute the action of $T_\kappa$. 
There are two easy cases and two hard cases.

\medskip
\noindent (a) {\tt 1i2f/1r1}. This means $\alpha$ is imaginary for
$\gamma$, 
$c^\alpha(\gamma)=\{\gamma',\gamma''\}$ is double valued, and
$\gamma',\gamma''\in\D^\sigma$ (one discrete series and two principal
series).
The {\tt f} refers to the fact that $\gamma',\gamma''$ are $\sigma$-fixed.

There is a
$6$-dimensional subspace of $\mathcal M$ on which the Hecke operator
$T_\kappa$ acts, with basis $a_{(\gamma,\pm)}, a_{(\gamma',\pm)}, a_{(\gamma'',\pm)}$
In the quotient $M$ this becomes $3$-dimensional by
\eqref{e:relations}(a). The matrix of $T_\kappa$ on this space is
\begin{equation}
\label{e:matrix1}
\begin{pmatrix}
  1&q-1&q-1\\
1&q-1&-1\\
1&-1&q-1
\end{pmatrix}
\end{equation}
with eigenvalues $u,u,-1$, and corresponding eigenvectors
$(1,1,0), (1,0,1)$ (eigenvalue $u$) and $(u-1,-1,-1)$ (eigenvalue
$-1$). 

\medskip

\noindent (b) {\tt 1i1/1r1f}. This means $\alpha$ is imaginary for
$\gamma$, 
$\gamma'=s_\alpha\gamma\ne\gamma$ and
$c^\alpha(\gamma)=c^\alpha(\gamma')=\gamma''$ is single valued (two
discrete series and one principal series) (also
$\sigma(\gamma)=\gamma$).
Again there is 
$6$-dimensional subspace of $\mathcal M$ on which the Hecke operator
$T_\kappa$ acts, in the quotient $M$ this becomes $3$-dimensional,
and the matrix of $T_\kappa$ on this space is
\begin{equation}
\label{e:matrix2}
\begin{pmatrix}
0&1&q-1\\
1&0&q-1\\
1&1&q-2
\end{pmatrix}
\end{equation}
with eigenvalues $q,-1,-1$, and corresponding eigenvectors
$(1,1,1)$ (eigenvalue $q$) and $(q-1,0,-1),(0,q-1,-1)$ (eigenvalue
$-1$). 

Now the hard cases.

\medskip

\noindent (c) {\tt 1i2s}: Just as in the {\tt 1i2f} case we have three parameters $\gamma$ and
$c^\alpha(\gamma)=\{\gamma',\gamma''\}$, except now 
$\sigma$ switches $\gamma',\gamma''$ 
(hence the {\tt s}). 

Now we need to be careful. There is a $3$ dimensional space  $\mathcal
V$ invariant
by $T_\kappa$, with basis
$$
a_{(\gamma,+)},a_{(\gamma,-)},a_{(\gamma')}
$$
Note that $(\gamma')=(\gamma'')$. In the quotient $M$ we have
$$
a_{(\gamma,-)}=-a_{(\gamma,+)}\quad\text{by }\eqref{e:relations}(a)
$$
and
$$
a_{(\gamma')}=0\quad\text{by }\eqref{e:relations}(b).
$$
The subspace we are modding out by is spanned by
$a_{(\gamma,+)}+a_{(\gamma,-)}$ and $a_{(\gamma')}$, i.e. $(1,1,0)$ and $(0,0,1)$ in the given basis.
The quotient $V$ is $1$ dimensional. 

So to calculate the action of $T_\kappa$ on the one-dimensional space
$V$,  calculate it on the $3$-dimensional space $\mathcal
V$, and mod out by the span of $(1,1,0),(0,0,1)$ (which better be
$T_\kappa$ invariant). 

Now comes some guesswork. Recall we started with $1$ discrete series
$\gamma$, and two principal series $\gamma',\gamma''$. However on the
extended group this becomes two discrete series $(\gamma,\pm)$ and one
principal series $(\gamma')$. This suggests that the action of the 
Hecke operator $T_\kappa$ on $\mathcal M$ is not \eqref{e:matrix1}
(the {\tt 1i2f} case) but rather 
\eqref{e:matrix2} (from the {\tt 1i1} case). 

\begin{conjecture}
In the {\tt 1i2s} case $T_\kappa$ acts on $\mathcal M$, with basis
$a_{(\gamma,+)},a_{(\gamma,-)},a_{(\gamma')}$ with matrix
\begin{equation}
\begin{pmatrix}
0&1&q-1\\
1&0&q-1\\
1&1&q-2
\end{pmatrix}
\end{equation}
\end{conjecture}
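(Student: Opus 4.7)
The plan is to prove the conjecture by reducing the \texttt{1i2s} situation, via extended parameters, to a computation that is structurally identical to the already-known \texttt{1i1} case on the extended group $\Gext$.

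First I would fix notation and identify the structural parallel. In the \texttt{1i2s} setting there is one $\sigma$-fixed parameter $\gamma$ with a double-valued Cayley transform $c^\alpha(\gamma)=\{\gamma',\gamma''\}$ which $\sigma$ switches. On the extended side, $\gamma$ lifts to two parameters $(\gamma,\pm)$ while $\gamma'$ and $\gamma''$ collapse to a single parameter $(\gamma')$; this is combinatorially the same as \texttt{1i1}, where two non-$\sigma$-fixed parameters $\gamma,\gamma'=s_\alpha\times\gamma$ have a common single-valued Cayley transform $\gamma''$. So I expect the extended Cayley transform $c^\alpha$ in the \texttt{1i2s} case to be single-valued, mapping each $(\gamma,\pm)$ to $(\gamma')$ (possibly with opposite signs), and the cross action $s_\alpha\times$ to interchange $(\gamma,+)$ and $(\gamma,-)$.

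Next I would verify that this combinatorial picture forces the matrix \eqref{e:matrix2}. Concretely, I would argue that the $\bH$-module $\mathcal M$ has a well-defined action (as asserted in \cite{lv2012b}), and that the three extended parameters $(\gamma,+),(\gamma,-),(\gamma')$ span a $T_\kappa$-invariant subspace $\mathcal V$. Since the length data and the Cayley/cross-action combinatorics on $\mathcal V$ agree with those used to derive \eqref{e:matrix2} in the \texttt{1i1} case, the same derivation (ascent/descent formulas plus the quadratic relation $(T_\kappa+1)(T_\kappa-u)=0$) must produce the same matrix, up to the choice of extension. The eigenvalue structure $q,-1,-1$ with the two $-1$ eigenvectors being $(q-1,0,-1)$ and $(0,q-1,-1)$ would then be an immediate consequence.

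I would then check compatibility with the quotient $\mathcal V \twoheadrightarrow V$. Using the relations \eqref{e:relations}, the kernel is spanned by $(1,1,0)$ and $(0,0,1)$; direct calculation shows $T_\kappa(1,1,0)^T=(1,1,2)^T=(1,1,0)^T+2(0,0,1)^T$ and $T_\kappa(0,0,1)^T=(q-1)(1,1,0)^T+(q-2)(0,0,1)^T$, so the kernel is $T_\kappa$-stable and the induced action on the one-dimensional $V$ is multiplication by $-1$. This recovers exactly the \texttt{1i2s} formula $T_{w_\kappa}(a_\gamma)=-a_\gamma$ from Section \ref{s:formulas}, providing a strong internal consistency check.

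The main obstacle will be the first step: establishing rigorously that the extended Cayley/cross-action combinatorics really do parallel \texttt{1i1} rather than \texttt{1i2f}. This requires a careful analysis, in the style of the (commented-out) extended-parameter sections, of how the $(\Hext)^{\theta_\xi}$ structure behaves when $\alpha$ is imaginary type 2 but $\sigma$ swaps the two Cayley transforms. Equivalently, one must show that for type \texttt{1i2s} the extended Cayley transform is single-valued with the two extensions of $\gamma$ mapping to $(\gamma')$ with opposite signs. Once that is pinned down, the rest is essentially a transcription of the \texttt{1i1} derivation; the alternative, more geometric, route would be to identify the corresponding $K$-equivariant local systems on the partial flag variety of type $\kappa$ for $\Gext$ and apply the sheaf-theoretic recursion of \cite{lv2012b} directly.
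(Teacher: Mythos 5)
The statement you are addressing is labeled a \emph{Conjecture} in the paper, and the paper does not prove it: it offers only the same plausibility argument you give (on the extended group the parameter count is two discrete series plus one principal series, matching \texttt{1i1}, so guess the same matrix), followed by the same consistency check that the kernel $\langle(1,1,0),(0,0,1)\rangle$ is $T_\kappa$-stable and the induced action on the one-dimensional quotient $V$ has eigenvalue $-1$. So your first three paragraphs recapitulate exactly what the paper already says, and your fourth paragraph honestly names the gap that the paper leaves open by calling this a conjecture.

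Two points are worth flagging. First, your claim that ``the same derivation (ascent/descent formulas plus the quadratic relation) must produce the same matrix'' overstates what the parameter-counting heuristic gives you: the quadratic relation pins down eigenvalues but not the full $3\times 3$ matrix, which is precisely why the paper treats this as guesswork rather than a deduction. Second, the only rigorous evidence in the paper beyond the heuristic is Marc van Leeuwen's braid-relation computation (reproduced in the appendix), and that computation only forces the eigenvalue of $T_\kappa$ on the quotient $V$ to be $-1$ --- i.e., that \texttt{1i2s} is an ascent --- which is strictly weaker than the conjectured $3\times 3$ matrix on $\mathcal M$. Your suggested routes to closing the gap (an extended-parameter Cayley analysis showing the Cayley transform is single-valued with opposite signs on $(\gamma,\pm)$, or a geometric local-systems argument) are reasonable and would go beyond what the paper does, but as written they are not carried out, so the proposal does not actually upgrade the conjecture to a theorem.
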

Recall this matrix has eigenvalues $q,-1,-1$.

Assuming this, the subspace spanned by $(1,1,0)$ and $(0,0,1)$ is
$T_\kappa$-invariant: $(1,1,1)$ has eigenvalue $q$ and $(q-1,q-1,-2)$
has eigenvalue $-1$. Therefore there is one remaining eigenvalue $-1$,
and we conclude

\begin{lemma} Assuming the conjecture, in the {\tt 1i2s} case 
$T_\kappa$ acts with eigenvalue $-1$ on the one-dimensional space $V$.
\end{lemma}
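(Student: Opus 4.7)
The plan is to use the assumed matrix presentation of $T_\kappa$ on $\mathcal V$ together with its known spectrum, and then pass to the quotient $V = \mathcal V/W$, where $W$ is the subspace spanned by the relations $a_{(\gamma,+)} + a_{(\gamma,-)}$ and $a_{(\gamma')}$ coming from \eqref{e:relations}. In the basis $(a_{(\gamma,+)}, a_{(\gamma,-)}, a_{(\gamma')})$ these are the vectors $(1,1,0)$ and $(0,0,1)$. The assertion to prove breaks into three pieces: (i) $W$ is $T_\kappa$-invariant (so the quotient action is defined), (ii) the quotient $V$ is one-dimensional, and (iii) the induced eigenvalue on $V$ is $-1$.

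For (i), I would simply compute $T_\kappa (1,1,0)^T$ and $T_\kappa (0,0,1)^T$ directly from the $3\times 3$ matrix. The first yields $(1,1,2)^T = (1,1,0)^T + 2(0,0,1)^T$ and the second yields $(q-1,q-1,q-2)^T = (q-1)(1,1,0)^T + (q-2)(0,0,1)^T$; both lie in $W$, which is what is needed. Step (ii) is immediate since $\dim\mathcal V = 3$ and $\dim W = 2$.

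For (iii), I want to identify which of the three generalized eigenvalues of $T_\kappa$ survive in the quotient. Since $T_\kappa$ on $\mathcal V$ has eigenvalues $q,-1,-1$, it suffices to show that $W$ contains a $q$-eigenvector and one $(-1)$-eigenvector; then the remaining eigenvalue, namely $-1$, must appear on the one-dimensional quotient. The candidate $q$-eigenvector from the {\tt 1i1} analogy is $(1,1,1) = (1,1,0) + (0,0,1) \in W$, which one verifies by a direct matrix multiplication. For the $(-1)$-eigenspace, computing the kernel of $M + I$ (which has all rows equal to $(1,1,q-1)$) gives the 2-plane $\{(a,b,c) : a+b+(q-1)c = 0\}$; the vector $(q-1,q-1,-2) = (q-1)(1,1,0) - 2(0,0,1)$ lies both in $W$ and in this kernel, as $(q-1)+(q-1)+(q-1)(-2) = 0$.

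The argument is essentially a routine linear-algebra verification once the conjecture supplies the matrix, so there is no serious obstacle; the only thing one must take care with is to do the invariance check and the two eigenvector checks with the same matrix conventions. As a sanity cross-check I would also restrict $T_\kappa$ to $W$ in the basis $\{(1,1,0),(0,0,1)\}$, obtaining $\bigl(\begin{smallmatrix}1 & q-1 \\ 2 & q-2\end{smallmatrix}\bigr)$, whose characteristic polynomial $\lambda^2 - (q-1)\lambda - q = (\lambda-q)(\lambda+1)$ confirms that $W$ carries precisely one copy of each eigenvalue $q$ and $-1$, forcing the induced action on $V$ to be scalar multiplication by $-1$.
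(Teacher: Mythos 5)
Your proof is correct and follows essentially the same route as the paper: identify the eigenvectors $(1,1,1)$ (eigenvalue $q$) and $(q-1,q-1,-2)$ (eigenvalue $-1$) lying inside the two-dimensional subspace $W$, observe they span it, and conclude the leftover eigenvalue $-1$ must appear on the one-dimensional quotient $V$. The additional direct invariance check and the explicit restriction of $T_\kappa$ to $W$ are nice confirmations but not a different method.
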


\medskip
\noindent (d) {\tt 1r1s} As in the {\tt 1r1f} case there are two
discrete series $\gamma,\gamma'$, one principal series $\gamma''$,
except that now $\sigma(\gamma)=\gamma'$. There is a $3$-dimensional
space $\mathcal V$ spanned by
$$
a_{(\gamma)},a_{(\gamma'',+)},a_{(\gamma'',-)}
$$
(recall $(\gamma)=(\gamma')$). For the quotient we have relations
$$
a_{(\gamma)}=0
$$ 
and
$$
a_{(\gamma'',-)}=-a_{(\gamma'',+)}
$$
so the subspace is spanned by $(1,0,0)$ and $(0,1,1)$. 

\begin{conjecture}
In the {\tt 1r1s} case the matrix of $T_\kappa$, in the basis
$a_{(\gamma)},a_{(\gamma'',+)},a_{(\gamma'',-)}$, is
\begin{equation}
\label{e:matrix1}
\begin{pmatrix}
  1&q-1&q-1\\
1&q-1&-1\\
1&-1&q-1
\end{pmatrix}
\end{equation}
\end{conjecture}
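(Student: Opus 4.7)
The plan is to establish this conjecture by pushing through the same heuristic that motivated the companion {\tt 1i2s} conjecture, namely that the action of $T_\kappa$ on $\mathcal M$ is determined by the configuration of parameters visible on the extended group rather than on $G$ itself. In the {\tt 1r1s} case we begin with two discrete series $\gamma, \gamma'$ swapped by $\sigma$ and one $\sigma$-fixed principal series $\gamma''$. Passage to $\wt\D$ collapses $\{\gamma,\gamma'\}$ to a single extended parameter $(\gamma)$ while doubling $\gamma''$ to $(\gamma'',\pm)$. This is the configuration ``one discrete plus two principal'' characteristic of {\tt 1i2f}, so one expects the same matrix \eqref{e:matrix1} that governs {\tt 1i2f} on its $3$-dimensional invariant block.

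First I would make this correspondence precise by invoking the extended-parameter formalism of Section \ref{s:extended}: for each $\mu\in\D^\sigma$ one fixes an extension $\wh\mu^+$ (with $\wh\mu^-=-\wh\mu^+$), while for each $\sigma$-orbit $\{\mu,\mu'\}\subset \D-\D^\sigma$ there is a single extension $(\mu)$. Applied to $\kappa=\{s_\alpha\}$ with $\alpha$ real of type {\tt 1r1s}, this identifies the $T_\kappa$-stable subspace $\mathcal V\subset \mathcal M$ with basis $\{a_{(\gamma)}, a_{(\gamma'',+)}, a_{(\gamma'',-)}\}$, and identifies the kernel of the projection $\mathcal M\twoheadrightarrow M$ restricted to $\mathcal V$ with $\mathrm{span}\langle (1,0,0),(0,1,1)\rangle$. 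Second, I would derive the matrix entries directly from the Cayley--transform/cross--action formulas for extended parameters. The key observation is that on the extended group $(\gamma)$ plays the role of the single ``imaginary'' parameter, whose Cayley transform in the analogous {\tt 1i2f} computation produces the two fixed extensions $\wh{\gamma^\kappa_i}{}^\pm$; translating this through the equivalence $a_{\wh\mu^-}=-a_{\wh\mu^+}$ produces the characteristic $+1,-1$ pattern visible in the lower-right $2\times 2$ block of \eqref{e:matrix1}, while the row corresponding to $(\gamma)$ inherits the $1,q-1,q-1$ pattern from the ``fixed descent'' side of the {\tt 1i2f} formula.

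The third step is verification. One checks that the prescribed matrix indeed stabilizes the kernel $\mathrm{span}\langle (1,0,0),(0,1,1)\rangle$ (a short linear-algebra check: $(1,0,0)$ maps to $(1,1,1)$ which is an eigenvector for the eigenvalue $u$, and $(0,1,1)$ maps to $2(q-1,q-2,q-2)$ lying in the same span up to an eigenvalue-$u$ vector), and that the induced action on the one-dimensional quotient $V$ has the expected eigenvalue, consistent with the quadratic relation \eqref{e:quadratic}. Equivalently one confirms the eigenvalues $u,u,-1$ of \eqref{e:matrix1} and produces explicit eigenvectors, as was done for {\tt 1i2f}.

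The main obstacle will be the step just before verification: pinning down the normalization of the extensions $\wh\gamma^+$ compatibly across the Cayley block so that the signs of the matrix entries come out as stated. This is exactly the kind of issue flagged in Desideratum \ref{desideratum} and in the discussion of Conjecture \ref{c:distinguish}, and is what makes the {\tt 1i2s}/{\tt 1r1s} pair genuinely subtle. In practice I expect the cleanest route is to compute a single small test case (the real analogue of the $SL(4,\R)$ {\tt 2i12} example worked out in the commented-out section, e.g.\ $PGL(2,\C)$ viewed as a real form with $\sigma$ the Galois involution) in which all extended parameters can be written down explicitly; once the matrix \eqref{e:matrix1} is confirmed there, the general statement follows because the Hecke computation is local at the root $\alpha$ and depends only on the combinatorial type {\tt 1r1s}.
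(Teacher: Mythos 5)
The statement is labeled a conjecture in the paper, and the paper does not prove it. The only support the paper offers is the heuristic ``guesswork'' that the extended-group parameter count (one extended discrete series $(\gamma)$ from the $\sigma$-swapped pair, plus two extensions $(\gamma'',\pm)$ of the fixed principal series) reproduces the configuration of case (a) {\tt 1i2f/1r1}, so the matrix should be \eqref{e:matrix1}; plus Marc van Leeuwen's appended email, which confirms the dual {\tt 1i2s} case by a braid-relation computation in the {\tt A3} example and only \emph{speculates} that {\tt 1r1s} follows ``by duality (more or less).'' Your plan rests on exactly this heuristic, so in spirit it matches the paper, and your final suggestion (verify by a small explicit test case and then argue locality of the Hecke computation in the type) is the same strategy Marc carried out for {\tt 1i2s}. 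But neither you nor the paper actually executes the derivation, so what you offer is a proof outline, not a proof, and the locality claim itself (``depends only on the combinatorial type {\tt 1r1s}'') would need justification the paper never supplies.

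Beyond that, the verification step as you wrote it contains concrete arithmetic errors. The vector $(1,1,1)$ is \emph{not} an eigenvector of \eqref{e:matrix1}: multiplying gives $(2q-1,\,q-1,\,q-1)$, not $q(1,1,1)$. What is true, and what makes the check work, is that $(1,1,1)=(1,0,0)+(0,1,1)$ already lies in the subspace being tested. Likewise the image of $(0,1,1)$ is $(2(q-1),\,q-2,\,q-2)$, not $2(q-1,\,q-2,\,q-2)$; this is again in $\mathrm{span}\langle(1,0,0),(0,1,1)\rangle$ since it equals $2(q-1)(1,0,0)+(q-2)(0,1,1)$. The cleaner way the paper records the invariance is to exhibit the two genuine eigenvectors inside that span, namely $(2,1,1)=(1,1,0)+(1,0,1)$ with eigenvalue $u$ and $(u-1,-1,-1)$ with eigenvalue $-1$; the quotient then inherits the remaining eigenvalue $u=q$. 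You should replace your eigenvector claims with this argument, and, more importantly, recognize that the step you call ``derive the matrix entries directly from the Cayley-transform/cross-action formulas'' is precisely the unproven content of the conjecture.
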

Recall this matrix has eigenvalues $q,q,-1$.

Assuming the conjecture, the subspace spanned by $(1,0,0)$ and
$(0,1,1)$ is $T_\kappa$-invariant: $(2,1,1)$ has eigenvalue $q$ and
$(u-1,-1,-1)$ has eigenvalue $-1$. 

\begin{lemma}
Assuming the conjecture, in the {\tt 1r1s} case  $T_\kappa$ acts on the one-dimensional space
$V$ with eigenvalue $q$. 
\end{lemma}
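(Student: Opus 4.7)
The plan is purely linear-algebraic: granted the matrix form supplied by the conjecture, the statement is a short computation combined with a dimension count.

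First I would verify that the two-dimensional subspace $W \subset \mathcal V$ spanned by $(1,0,0)$ and $(0,1,1)$---the subspace by which we must quotient to obtain $V$, according to the relations $a_{(\gamma)}=0$ and $a_{(\gamma'',-)}=-a_{(\gamma'',+)}$---is stable under the matrix of $T_\kappa$ given in the conjecture. This amounts to two $3\times 3$ matrix-vector multiplications, and one checks directly that
\[
T_\kappa\begin{pmatrix}1\\0\\0\end{pmatrix}=\begin{pmatrix}1\\1\\1\end{pmatrix}=(1,0,0)^T+(0,1,1)^T
\]
and similarly that $T_\kappa(0,1,1)^T$ lies in $W$; this confirms invariance and so guarantees that $T_\kappa$ descends to an operator on the one-dimensional quotient $V$.

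Next I would exhibit two explicit eigenvectors of $T_\kappa$ lying inside $W$, namely $(2,1,1) = 2(1,0,0) + (0,1,1)$ with eigenvalue $q$ and $(q-1,-1,-1) = (q-1)(1,0,0) - (0,1,1)$ with eigenvalue $-1$. Each verification is a one-line multiplication against the matrix of the conjecture. This shows that $T_\kappa|_W$ is diagonalizable with spectrum $\{q,-1\}$.

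Finally I would invoke the spectral content of the conjecture: the matrix has characteristic polynomial $(t-q)^2(t+1)$, so the multiset of eigenvalues on $\mathcal V$ is $\{q,q,-1\}$. Since $W$ accounts for exactly one eigenvalue $q$ and the unique eigenvalue $-1$, the induced action on the one-dimensional quotient $V = \mathcal V/W$ must be scalar multiplication by the single remaining eigenvalue, which is $q$. This is the desired statement.

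The main obstacle is not in the proof of the lemma itself---which is essentially a three-line verification---but rather in establishing the conjecture that underlies it; the lemma merely extracts, by linear algebra and a trace/multiplicity count, the eigenvalue on the physically relevant quotient once the full $3\times 3$ action is granted.
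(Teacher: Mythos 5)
Your proposal is correct and follows essentially the same route as the paper: both verify that the subspace $W$ spanned by $(1,0,0)$ and $(0,1,1)$ is $T_\kappa$-invariant, exhibit the eigenvectors $(2,1,1)$ (eigenvalue $q$) and $(q-1,-1,-1)$ (eigenvalue $-1$) inside $W$, and then conclude from the multiset of eigenvalues $\{q,q,-1\}$ of the full $3\times 3$ matrix that the remaining eigenvalue $q$ must be the scalar by which $T_\kappa$ acts on the one-dimensional quotient $V$.
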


\bigskip

\noindent{\bf\Large Conclusion}

Assume the conjectures. 

In case {\tt 1i2s} there is a single fixed discrete series parameter
$\gamma$, with two extensions $(\gamma,\pm)$. 
Recall $a_{(\gamma,+)}=-a_{(\gamma,-)}$.
Then (in $M$):
\begin{subequations}
\renewcommand{\theequation}{\theparentequation)(\alph{equation}}  
\begin{equation}
T_\kappa a_{(\gamma,+)}=-a_{(\gamma,+)}
\end{equation}
In case {\tt 1r1s} there is a single fixed principal series parameter
$\gamma$, with two extensions $(\gamma,\pm)$. 
Recall $a_{(\gamma,+)}=-a_{(\gamma,-)}$. Then (in $M$):
\begin{equation}
T_\kappa a_{(\gamma,+)}=qa_{(\gamma,+)}
\end{equation}
\end{subequations}
In the notation of my  notes {\it Computing Twisted KLV polynomials}, Section 6, these
would be written simply
$$
\boxed{\begin{aligned}
T_\kappa(a_\gamma)&=-a_\gamma\quad({\tt 1i2s}\text{ case})\\
T_\kappa(a_\gamma)&=qa_\gamma\quad({\tt 1r1s}\text{ case})
\end{aligned}}
$$

And here is an email from Marc completing the argument.

\begin{verbatim}
Date: Fri, 13 Dec 2013 17:05:09 +0100
From: Marc van Leeuwen <Marc.van-Leeuwen@math.univ-poitiers.fr>
To: Jeffrey Adams <jda@math.umd.edu>
CC: David Vogan <dav@math.mit.edu>, 
Subject: Confirmation: 1i2s must be an ascent (Re: 1i2s/1r1s)


On 06/12/13 04:18, Jeffrey Adams wrote:

> Marc raised a (yet another) valid objection to my formulas. After
> talking to David I arrived at the resolution explained in the attached
> file. I'm quite confident in the whole picture. However I'm not
> confident in my ability to calculate the 3x3 matrices in the two
> conjectures. I give a plausability argument for them, and hope that
> David is able to confirm or fix them.

I am not able to see how to compute those 3x3 matrices by sheer brain power
either. However, what I can do is compute braid relations. I needed a case
where one of these types occur, and fortunately there is an easy one:

> empty: type
> Lie type: A3 sc s
> main: extblock 
> (weak) real forms are:
> 0: sl(2,H)
> 1: sl(4,R)
> enter your choice: 1
> possible (weak) dual real forms are:
> 0: su(4)
> 1: su(3,1)
> 2: su(2,2)
> enter your choice: 1
> Name an output file (return for stdout, ? to abandon): 
> 0  1  [2C+ ,1rn ]  4  0   (*,*)  (*,*)  2^e
> 4  3  [2C- ,1i2s]  0  4   (*,*)  (*,*)  1x2^e

Now there is the first Hecke generator $T_{1,3}$, which acts by the companion
matrix $C$ of the quadratic relation $(X-q^2)(X+1)=X^2-(q^2-1)X-q^2$, and the
second Hecke generator $T_{2}$ which acts by a diagonal matrix $D$ with
diagonal coefficients $x=-1$ and $y\in\{-1,q\}$, the latter depending on
whether type 1i2s is an ascent ($y=-1$) or a descent ($y=q$). If it is an
ascent, then the two matrices obviously commute, and the required braid
relation $CDCD=DCDC$ is satisfied. If however $y=q$, then the matrices do not
commute, and a fairly easy computation shows that the off-diagonal coefficients
of CDCD and DCDC would not match up. Therefore 1i2s must be an ascent. I
suppose that by duality (more or less) 1r1s must be a descent, but I did not
really check.

This confirms what Jeff wrote. Cheers,

-- Marc

\end{verbatim}

\subsec{email from David Vogan regarding the outline}
\begin{verbatim}

Date: Thu, 07 Nov 2013 14:31:17 -0500
From: David Vogan <dav@math.mit.edu>
To: Marc.van-Leeuwen@math.univ-poitiers.fr, jeffreydavidadams@gmail.com
Subject: Re: twisted KLV

Dear Marc,

You're absolutely correct that none of the references gives a very
clear picture of how the recursions work.  The point of Jeff's notes
"Computing twisted KLV polynomials" ([CTKLP]) was to do that, and I
think he does a good job of writing down all the details properly; but
my understanding of your objection is that you want to know where the
details come from in order to be able to implement them reliably.

So here is the picture. We have the Coxeter group (W,S) with
involutive automorphism \sigma (defining an involutive automorphism of
S). Therefore W^\sigma is a Coxeter group with one generator \kappa
for each orbit (also called \kappa) of \sigma on S. The orbit \kappa
defines a Levi subgroup of W of type A_1 or A_1 x A_1 or A_2, which
has a long element w_\kappa of length 1, 2, or 3 accordingly.

Accordingly we get an unequal parameter Hecke algebra with one
generator T_\kappa for each orbit. The new-looking relation is (2.2)
in [CTKLP]:

(T_\kappa + 1)(T_\kappa - u^{\ell(w_\kappa)}) = 0. 

Of course this means that in any module, T_\kappa has eigenvalues -1
and u^{\ell(w_\kappa)}.  It's convenient to introduce v = u^{1/2} and
work instead with

\widehat T_{\kappa} = v^{-\ell(w_\kappa)}(T_\kappa + 1)

(Fokko's "c_s"). This element has eigenvalues 0 and
(v^{\ell(w_\kappa)} + v^{-\ell(w_\kappa)}).  Obviously

ker(\widehat T_{\kappa}) = zero eigenspace
im (\widehat T_{\kappa}) = v^{\ell(w_\kappa)}+v^{-\ell(w_\kappa)} eigenspace.

As you know, the module for the Hecke algebra has a "standard" basis
of various \widehat a_\gamma, and a KL basis of \widehat C_\gamma,
both indexed by the same "parameters" \gamma; and of course the
(twisted) KL polynomials are the transition matrix for expressing the
\widehat C_\delta in terms of the \widehat a_\gamma.  Here are the key
facts.

1. The action of the \widehat T_\kappa on the \widehat a_\gamma is
known, (involving more or less just Cayleys and crosses on \gamma by
\kappa). 

2. For each \kappa, the parameters divide more or less evenly into
those for which \kappa is a DESCENT and those for which it is an
ASCENT. These terms are defined by (3) below.

3. \kappa is a DESCENT for \gamma if and only if 

\widehat T_{\kappa} \widehat C_\gamma =
      (v^{\ell(w_\kappa)}+v^{-\ell(w_\kappa)}) \widehat C_\gamma.

The \widehat C_\gamma with \gamma a descent are a basis of this
eigenspace of \widehat T_{\kappa}.

4. If \kappa is an ASCENT for \lambda, then 

\widehat T_{\kappa} \widehat C_\lambda = combination of \widehat
                                        C_\gamma as in 3.

Statement 4. is a consequence of the last assertion in 3. and the
(obvious) statements about eigenspaces, ker, and im above. It's Lemma
9.3.5 in [CTKLP], made more explicit in Theorem 9.3.10.

Statement 3. leads to the "easy recursions," because it relates the
the KL polynomials for \gamma and \delta to those for \gamma and
[\kappa-Cayleys and crosses of \delta].

The way to use Statement 4. is in constructing \widehat C_\lambda by
induction on \lambda. Given a new and unknown \lambda', try to write
it as cross or Cayley of a shorter \lambda. If you can do this, then
\widehat C_\lambda' will appear on the right side of the formula in
4., probably with some very simple coefficient. The left side of 4. is
known. Try to see that all the other C_\gamma on the right in this
formula are already known, and their coefficients are already known;
then you can solve 4. for the unknown C_\lambda'.

I've written too many words, I'm afraid, but at least it's shorter
than the references. One last point: it might or might not help to
list my Park City paper as one of the references for the classical KLV
case. But my copy of that has a reasonable number of typos marked on
it, so don't read it too closely.

Take care,
David
\end{verbatim}

\subsec{Further explanation of the algorithm}

\begin{verbatim}
Date: Thu, 19 Dec 2013 21:08:13 +0100
From: Marc van Leeuwen <Marc.van-Leeuwen@math.univ-poitiers.fr>
To: Jeffrey Adams <jeffreydavidadams@gmail.com>
Subject: Re: modules
...
I've not stumbled on anything too difficult to understand, but I do
admit being puzzled about where this is going. I've checked table
9.1.3, which is OK except for a minus sign at 1r1s that (given the
mentioned resolution) should become '+', but I'm not entirely
convinced of the utility of the simplification (at the expense of the
new notation $\hat a_\lambda^\kappa$); even more so about the
notations 'def_\lambda' and \zeta_\kappa that follow. It looks like
there are (a lot) more rewritings of the same stuff before one comes
to actual recursion relations; maybe you could give an idea of the big
picture for these recursions that would help understanding why these
kind of reformulations are useful/necessary. It might help me speed up
my reading; I am not particularly good at digesting long and numerous
formulas .
\end{verbatim}

Some explanation\dots

The normalization $\hat a_\gamma=v^{-\ell(\gamma)}a_\gamma$ (which I got from Fokko) is just for convenience.
On the other hand the terminology 
$\hat a_\gamma^{\kappa}$ is more serious: this is designed 
to make Lemma \ref{l:kappadescents}(3) hold, i.e. 
$\hat a_\gamma^{\kappa}$ for $\kappa\in\tau(\gamma)$ are a basis of the image of $\wh T_\kappa$. 

The $\defect_\lambda$ and $\zeta_\kappa$ terminology are intended
partly to make coding easier: I assume it is easier to code (and
debug) a smaller table like \ref{table:akappagammainTClambda2} than
the bigger one \ref{table:akappagammainTClambda}. But whatever works
best is fine.

As far as the algorithm goes, on the one hand 
if $\kappa\not\in\tau(\lambda)$ then
Section \ref{s:TChatintermsofa} gives
\begin{subequations}
\renewcommand{\theequation}{\theparentequation)(\alph{equation}}  
\begin{equation}
\begin{aligned}
\T_\kappa(\Chat_\lambda)=\sum_{\gamma\mid\kappa\in\tau(\gamma)} c(\gamma,\lambda)\wh a_\gamma^\kappa
\end{aligned}
\end{equation}
where $c(\gamma,\lambda)$ can be computed provided we know various $\P(*,\lambda)$
,see Lemma \ref{l:coefficient}, which we will.

On the other hand Theorem \ref{t:T_kappa} says that 
\begin{equation}
\T_\kappa(\Chat_\lambda)=
\sum_{\gamma|\kappa\in\tau(\gamma)}m_\kappa(\gamma,\lambda)\Chat_\gamma.
\end{equation}
for certain coefficients 
 $m_\kappa(\gamma,\lambda)$.

Suppose $\kappa\in\tau(\gamma),\tau(\mu)$, and $\kappaarrow\mu\lambda$, 
and compare the coefficients of $\wh a^\kappa_\gamma$ in (a) and (b). 
We hope to get a formula for $\P(\gamma,\mu)$.

To compute $P(\gamma,\mu)$ we may assume we know:
$$
\begin{aligned}
\P(*,\mu')&\text{ if }\ell(\mu')<\ell(\mu)\\
\P(\gamma',\mu)&\text{ if }\ell(\gamma')>\ell(\gamma).
\end{aligned}
$$
So, since $\ell(\lambda)<\ell(\mu)$ we know $c(\gamma,\lambda)$ in (a).
On the other hand the coefficient of $\wh a_\gamma^\kappa$ in (b) 
is {\it roughly speaking}
\begin{equation}
\P(\gamma,\mu)+
\sum_{\substack{\delta|\kappa\in\tau(\delta)\\\ell(\delta)<\ell(\lambda)}}\P(\gamma,\delta)m_\kappa(\delta,\lambda)
\end{equation}
Setting this equal to $c(\gamma,\lambda)$ we conclude
\begin{equation}
\P(\gamma,\mu)=
c(\gamma,\lambda)-\sum_{\substack{\delta|\kappa\in\tau(\delta)\\\ell(\delta)<\ell(\lambda)}}\P(\gamma,\delta)m_\kappa(\delta,\lambda)
\end{equation}
Since $\ell(\delta)<\ell(\lambda)<\ell(\mu)$ we know $\P(\gamma,\delta)$. 
Again, {\it roughly speaking}, $m_\kappa(\delta,\lambda)$ is in terms of various $\P(*,\lambda')$ with 
$\ell(\lambda')\le \ell(\lambda)<\ell(\mu)$, and $\P(\delta,\mu)$ with $\ell(\delta)>\ell(\gamma)$, which we also know. 
So by induction we can compute $\P(\gamma,\mu)$. 

This argument works precisely as stated in some cases. 
However it can run into trouble in one or both {\it roughly speaking} clauses. 
For one thing the left hand side of (d) may have two terms.
(Actually the left hand side of (d) is multiplied by  $\pm1$ or $(v+v\inv)$, 
from Theorem \ref{t:T_kappa}(1), but this isn't serious). 
For another it isn't true that $\ell(\delta)<\ell(\lambda)$ in (c), only that this holds in most cases,
and for most terms. See Section \ref{s:analysis} for details.

\end{subequations}

\bibliographystyle{plain}
\def\cprime{$'$} \def\cftil#1{\ifmmode\setbox7\hbox{$\accent"5E#1$}\else
  \setbox7\hbox{\accent"5E#1}\penalty 10000\relax\fi\raise 1\ht7
  \hbox{\lower1.15ex\hbox to 1\wd7{\hss\accent"7E\hss}}\penalty 10000
  \hskip-1\wd7\penalty 10000\box7}
  \def\cftil#1{\ifmmode\setbox7\hbox{$\accent"5E#1$}\else
  \setbox7\hbox{\accent"5E#1}\penalty 10000\relax\fi\raise 1\ht7
  \hbox{\lower1.15ex\hbox to 1\wd7{\hss\accent"7E\hss}}\penalty 10000
  \hskip-1\wd7\penalty 10000\box7}
  \def\cftil#1{\ifmmode\setbox7\hbox{$\accent"5E#1$}\else
  \setbox7\hbox{\accent"5E#1}\penalty 10000\relax\fi\raise 1\ht7
  \hbox{\lower1.15ex\hbox to 1\wd7{\hss\accent"7E\hss}}\penalty 10000
  \hskip-1\wd7\penalty 10000\box7}
  \def\cftil#1{\ifmmode\setbox7\hbox{$\accent"5E#1$}\else
  \setbox7\hbox{\accent"5E#1}\penalty 10000\relax\fi\raise 1\ht7
  \hbox{\lower1.15ex\hbox to 1\wd7{\hss\accent"7E\hss}}\penalty 10000
  \hskip-1\wd7\penalty 10000\box7} \def\cprime{$'$} \def\cprime{$'$}
  \def\cprime{$'$} \def\cprime{$'$} \def\cprime{$'$} \def\cprime{$'$}
  \def\cprime{$'$} \def\cprime{$'$}
\begin{bibdiv}
\begin{biblist}

\bib{unitaryDual}{article}{
      author={Adams, J.},
      author={Trapa, Peter},
      author={van Leeuwen, Marc},
      author={Vogan, David A.~Jr.},
       title={Unitary dual of real reductive groups},
        date={2012},
        note={preprint, arXiv:1212.2192},
}

\bib{algorithms}{article}{
      author={Adams, Jeffrey},
      author={du~Cloux, Fokko},
       title={Algorithms for representation theory of real reductive groups},
        date={2009},
        ISSN={1474-7480},
     journal={J. Inst. Math. Jussieu},
      volume={8},
      number={2},
       pages={209\ndash 259},
      review={\MR{MR2485793}},
}

\bib{implementation}{article}{
      author={du~Cloux, Fokko},
       title={Implementation of the {K}azhdan-{L}usztig algorithm},
      eprint={http://www.liegroups.org/papers},
}

\bib{lv2012a}{article}{
      author={Lusztig, G.},
      author={Vogan~Jr., D.A.},
       title={Hecke algebras and involutions in {W}eyl groups},
        date={2012},
        note={arXiv:1109.4606},
}

\bib{lv2012b}{article}{
      author={Lusztig, George},
      author={Vogan, David~A., Jr.},
       title={Quasisplit {H}ecke algebras and symmetric spaces},
        date={2014},
        ISSN={0012-7094},
     journal={Duke Math. J.},
      volume={163},
      number={5},
       pages={983\ndash 1034},
         url={http://dx.doi.org/10.1215/00127094-2644684},
      review={\MR{3189436}},
}

\bib{voganparkcity}{incollection}{
      author={Vogan, David~A., Jr.},
       title={The {K}azhdan-{L}usztig conjecture for real reductive groups},
        date={1983},
   booktitle={Representation theory of reductive groups ({P}ark {C}ity, {U}tah,
  1982)},
      series={Progr. Math.},
      volume={40},
   publisher={Birkh\"auser Boston},
     address={Boston, MA},
       pages={223\ndash 264},
      review={\MR{733817 (85g:22028)}},
}

\end{biblist}
\end{bibdiv}

\enddocument
\end